\documentclass[pdflatex,sn-mathphys-num]{sn-jnl}

\usepackage{graphicx}%
\usepackage{multirow}%
\usepackage{amsmath,amssymb,amsfonts}%
\usepackage{amsthm}%
\usepackage{mathrsfs}%
\usepackage[title]{appendix}%
\usepackage{xcolor}%
\usepackage{textcomp}%
\usepackage{manyfoot}%
\usepackage{booktabs}%
\usepackage{algorithm}%
\usepackage{algorithmicx}%
\usepackage{algpseudocode}%
\usepackage{listings}%

\theoremstyle{thmstyleone}%
\newtheorem{theorem}{Theorem}
\theoremstyle{thmstyletwo}%
\newtheorem{remark}{Remark}%

\theoremstyle{thmstylethree}%

\theoremstyle{thmstylethree}%
\newtheorem{corollary}{Corollary}%

\begin{document}

\title[Sparse entropic quadrature for moment equations]{Sparse entropic quadrature for moment equations}


\author*[1]{\fnm{Georgii} \sur{Oblapenko}}\email{oblapenko@acom.rwth-aachen.de}

\author[2]{\fnm{Michael} \sur{Herty}}\email{herty@igpm.rwth-aachen.de}

\author[1]{\fnm{Manuel} \sur{Torrilhon}}\email{mt@acom.rwth-aachen.de}

\affil*[1]{\orgdiv{Applied and Computational Mathematics}, \orgname{RWTH Aachen}, \orgaddress{\street{Schinkelstrasse 2}, \city{Aachen}, \postcode{52062}, \country{Germany}}}

\affil[2]{\orgdiv{Institut für Geometrie und Praktische Mathematik}, \orgname{Organization}, \orgaddress{\street{Im Süsterfeld 2}, \city{Aachen}, \postcode{52072}, \country{Germany}}}


\abstract{In the present work, we study the properties of the sparse entropic quadrature method, a recently proposed method
for the closure of systems of moment equations in rarefied gas dynamics. We prove the properties of the method regarding its well-posedness, hyperbolicity, and stability for first-order spatial convection schemes. We apply the method to two model rarefied gas dynamics problems and compare the results with those given by a discrete velocity method and show that it is capable of reproducing rarefied gas effects whilst having significantly fewer degrees of freedom than the discrete velocity method.}

\keywords{moment equations, Boltzmann equation, kinetic theory, rarefied gas dynamics, discrete velocity method}



\maketitle
\section{Introduction}\label{sec:introduction}
Modelling of rarefied gas effects, that is, effects which are not captured by classical Euler and Navier--Stokes equations, is required
for simulation of a wide variety of problems in areas such as aerospace engineering, design of microelectronic devices, and astrophysics~\cite{cercignani1988,shen2005rarefied}.
Approaches to model such flows include particle-based methods such as DSMC and stochastic particle Fokker-Planck methods, spectral methods, discrete velocity methods, and moment methods. The latter family of methods possesses some attractive properties, such
as being amenable to inclusion in existing computational fluid dynamics codes, being capable of the Euler and Navier--Stokes equations in the continuum limit, and possibility for model adaptivity on the equation level~\cite{torrilhon2017hierarchical,verbiest2026model}. However, for the closure of the moment system of equations, one needs to estimate the higher-order moments of the distribution function without having knowledge about the distribution function itself, i.e. solving the classical moment closure problem~\cite{hamburger1944hermitian,Shohat1945problem,schmuedgen2017moment}.
Moment methods also arise in other contexts, such as radiative transport~\cite{struchtrup1998number,modest2021radiative}, geosciences~\cite{milbrandt2005multimoment,yuan2012extended,koellermeier2020analysis}, population dynamics~\cite{singh2006moment,gillespie2009moment}, and traffic flow modelling.~\cite{marques2013kinetic,herty2020bgk}

A large body of work exists on solving the moment closure problem, with various approaches exhibiting different advantages and disadvantages. Frequently, the distribution function is expanded into a finite ansatz, with the expansion coefficients, as well as potentially the parameters governing the basis functions, being found by matching the moments of the ansatz to the known set of transported moments. However, care must be taken to ensure that a non-negative distribution can be reconstructed, the resulting moment transport
equations are hyperbolic, the problem is not ill-conditioned, and that the approach is computationally efficient.
In addition, many methods have been developed to solve the moment closure problem in one dimension, but the extension to multiple dimensions is not always straighforward, and also imposes additional constraints, such as rotational invariance of the resulting system.

Grad's method~\cite{grad2kinetic} and its regularized version~\cite{struchtrup2003regularization}, quadrature-based approaches~\cite{mcgraw1997description,fox2008quadrature,desjardins2008quadrature,chalons2010beyond,fox2018conditional,van2021higher,huang2020stability,FoxLaurent,fox2023generalized,yilmaz2026nonlinear}, and maximum entropy methods~\cite{levermore1997entropy,mcdonald2013affordable,alldredge2019regularized} are among the most popular methods to study and solve the moment problem. Some methods, such as the projection method~\cite{koellermeier2014framework} or the $\varphi$-divergence formulation~\cite{abdelmalik2016moment} provide a more unified view of the problem, as well as generalization pathways; a recent study~\cite{pichardconvergence} investigates the question of convergence of the moment equations to the underlying kinetic equations. We also refer the reader to~\cite{torrilhon2016modeling,pichard2023some} for overviews of moment methods for kinetic theory problems.

Other more recent approaches include methods which can be classified as hybrid, as they utilize ansatzes from one approach with optimization objectives of another. Such methods include the maximum entropy formulation of Schaerer et al.~\cite{schaerer2017efficient,schaerer201735}, which uses quadrature rules to evaluate the integrals appearing in the entropy minimization problem and is directly comparable to discrete velocity methods; the entropic quadrature method~\cite{bohmer2020entropic}, which combines the maximum entropy method with a quadrature-based moment approach to construct the quadratures; and other related approaches such as the augmented discrete velocity method, which uses a fixed set of quadrature points~\cite{pichard2025entropy}, and the sparse entropic quadrature method~\cite{oblapenko2026sparse},
which augments the objective with a sparsity-promoting term and uses the local equilibrium distribution as a preconditioner. It should be noted that due to the finite support of these hybrid methods, they avoid the issues of non-realizable states of the maximum entropy method~\cite{junk1998domain,hauck2008convex,schaerer2017efficient}. Moreover, unlike  quadrature-based methods, where extension to multiple dimension is non-trivial~\cite{yuan2011conditional,rice2026robustly}, these are easily applied to multi-dimensional moment systems, although at the cost of having to solve systems of equations whose number of variables grows exponentially with the number of dimensions; a recent approach circumvents this by using a particle-like ansatz~\cite{oblapenko2026sparsekrm}.
The enforcement of sparsity not only allows using the reconstruction method for efficient storage of high-resolution distributions,
but also for efficient coupling of moment methods with discrete velocity solvers~\cite{dimarco2014numerical}.

In the present work, we analyze the theoretical properties of the sparse entropic quadrature method, extending the analysis of the augmented discrete velocity method first developed in~\cite{pichard2025entropy}, and apply the method to model one-dimensional flows.
The paper is structured as follows. In~Sec.\ref{sec:equations}, we introduce the governing equations and the moment system. In~Sec.~\ref{sec:closure} we present the sparse entropic quadrature closure and analyze its properties --- first, we prove existence, uniqueness, and smoothness of the closure; then, we analyze hyperbolicity of the closure in the one-dimensional case and prove strict hyperbolicity.
We then discuss numerical schemes for transport in the one-dimensional case, and prove realizability for first-order transport schemes, and present a Jacobian-free second-order Lax-Wendroff scheme.
In~Sec.~\ref{sec:algorithm} we provide an overview of the full numerical algorithm for solving collisional rarefied gas flows in one spatial dimension.
In~Sec.\ref{sec:numerical results} we present numerical results for the Sod shock tube and Couette flow test cases, and finally, in Sec.~\ref{sec:conclusions} we present our conclusions and avenues for future work.

\section{Governing equations}\label{sec:equations}

We are interested in numerically solving the following kinetic equation:
\begin{equation}
    \partial_t f(\mathbf{x},\mathbf{v},t) + \mathbf{v} \cdot \partial_{\mathbf{x}} f(\mathbf{x},\mathbf{v},t) = \mathcal{Q}(f(\mathbf{x},\mathbf{v},t)).\label{eq:kinetic}
\end{equation}
Here $f(\mathbf{x},\mathbf{v},t) \geq 0$ is the probability density function of particles at position $\mathbf{x} \in \mathbb{R}^{d_x}$ and velocity $\mathbf{v} \in \mathbb{R}^{d}$ at time $t \geq 0$, and $\mathcal{Q}$ is a collision operator describing the interaction of particles. 
In the present work, we are mostly concerned with the convective part appearing on the left-hand side of~\eqref{eq:kinetic}, and therefore specify $\mathcal{Q}$ only in the section on numerical results.

Due to the high dimensionality of $f$, direct solution of~\eqref{eq:kinetic} is computationally expensive. In this work, we consider moment methods, where we model the evolution of the moments of $f$, defined as
\begin{equation}
    m_{k_1,\ldots,k_d} = \int  f(\mathbf{v})\prod_{l=1}^{d} v_l^{k_l} \mathrm{d} \mathbf{v} \label{eq:constraints},
\end{equation}
where $k_l \geq 0$, $l=1,\ldots,d$.

From the kinetic equation~\eqref{eq:kinetic}, we can derive the following transport equation for the moments by multiplying it with $\prod_{l=1}^{d} v_l^{k_l}$ and integrating over $\mathbf{v}$:
\begin{equation}
    \frac{\partial m_{k_1,\ldots,k_d}}{\partial t} + \nabla \cdot \mathbf{F}_{k_1,\ldots,k_d} = \mathcal{Q}_{k_1,\ldots,k_d},\:k_l \geq 0,\: l=1,\ldots,d.\label{eq:transport}
\end{equation}
Here the flux is given by
\begin{align}
    \mathbf{F}_{k_1,\ldots,k_d} = \left( m_{k_1+1,k_2,\ldots,k_{d-1},k_d},m_{k_1,k_2+1,\ldots,k_{d-1},k_d},\ldots,\right.\nonumber\\
    \left. m_{k_1,k_2,\ldots,k_{d-1}+1,k_d},m_{k_1,k_2,\ldots,k_{d-1},k_d+1}\right),
\end{align}
and the source term as
\begin{equation}
    \mathcal{Q}_{k_1,\ldots,k_d} = \int \mathcal{Q}(f(\mathbf{x},\mathbf{v},t)) \prod_{l=1}^{d} v_l^{k_l} \mathrm{d} \mathbf{v}.
\end{equation}
As each equation for the time evolution of $m_{k_1,\ldots,k_d}$ involves higher-order moments of $f$, a closure is required, i.e. a way to compute $m_{k_1+1,k_2,\ldots,k_{d-1},k_d}$, $m_{k_1,k_2+1,\ldots,k_{d-1},k_d}$ given only knowledge about the lower-order moments.
We do that by reconstructing an underlying distribution function $f$ from the moments and using it to compute the missing higher-order moments required
to evaluate the fluxes.

\section{Sparse entropic quadrature closure}\label{sec:closure}
We use a discrete representation of the distribution function $f$ on a fixed grid in velocity space:
\begin{equation}
    f(\mathbf{x},\mathbf{v},t) \approx \sum_{i_1,\ldots,i_d}f_{i_1,\ldots,i_d}(\mathbf{x},t) \delta\left(\mathbf{v} - \mathbf{v}_{i_1,\ldots,i_d}\right).\label{eq:f-dvm}
\end{equation}
where $\mathbf{v}_{i_1,\ldots,i_d}$ are fixed velocity nodes, and $f_{i_1,\ldots,i_d}$ are the unknown values of the distribution function. The nodes have associated quadrature weights $\Delta v_{i_1,\ldots,i_d}$ that we use
when integrating functionals of $f$.
From now on, we consider the set of values of the distribution function as a long vector of values $\mathbf{f}\in \mathbb{R}^{N}$ (via a bijective mapping from $(i_1,\ldots,i_d) \to l$). Similarly, we ``unwrap'' the quadrature weights into a vector $\Delta \mathbf{v}$ with components $\Delta v_l$, and the velocity nodes into a vector $\mathbf{V}\in \left(\mathbb{R}^{d}\right)^N$ where each element $\mathbf{v}_l$ is a $d$-dimensional vector on the velocity grid.

We also define, for a given set of values $\mathbf{f}$, a discrete measure and the integration with respect to it:
\begin{equation}
    \mu(\mathbf{v}) := \sum_{l}\Delta v_{l}f_{l}\delta(\mathbf{v} - {\mathbf{v}_{l}}),\qquad
    \left\langle \cdot \right\rangle_{\mu} := \int \cdot\ \mathrm{d}\mu.
\end{equation}
Below, $\mu^{\ast}$ denotes the measure built in this way from the reconstruction $\mathbf{f}^{\ast}$ obtained by solving the optimization problem of
Section~\ref{sec:closure}, and we abbreviate $\left\langle \cdot \right\rangle := \left\langle \cdot \right\rangle_{\mu^{\ast}}$.

We can then introduce a moment measurement matrix $\mathbf{A}\in \mathbb{R}^{M \times N}$ which computes a vector of $M$ moment values
via integration against the discrete measure defined above, which reduces to a matrix-vector product:
\begin{equation}
    \mathbf{m} = \mathbf{A}\mathbf{f},\label{eq:moment-compute-linear-def}
\end{equation}
where the components of $\mathbf{A}$ are given by
\begin{equation}
    \mathbf{A}_{kl} = \Delta v_{l}p_{k}(\mathbf{v}_{l}),\qquad
    p_{k}(\mathbf{v}_l) := \prod_{i=1}^{d} v_{l,i}^{\alpha_{k,i}},\qquad k = 1,\ldots,M, \quad l = 1,\ldots,N,\label{eq:A-def}
\end{equation}
where $v_{l,i}$ is the $i$-th component of $\mathbf{v}_l\in\mathbb{R}^{d}$ and $\alpha_{k,i} \in \mathbb{N}_0$ is the integer exponent of the $i$-th velocity component
of the $k$-th moment. We write $\boldsymbol{\alpha}_{k} = (\alpha_{k,1},\ldots,\alpha_{k,d})$ for the exponent vector of the $k$-th constrained moment, collect the
constraint monomials into
\begin{equation}
    \mathbf{p}(\mathbf{v}_l) := \left(p_{1}(\mathbf{v}_l),\ldots,p_{M}(\mathbf{v}_l)\right)^{T},\label{eq:p-def}
\end{equation}
and denote by $m_{\boldsymbol{\alpha}}$ the component of $\mathbf{m}$ whose exponent vector is $\boldsymbol{\alpha}$, so that the density is given by $m_{0,\ldots,0}$ and the
momentum components are $m_{\boldsymbol{\epsilon}_{i}}$, with $\boldsymbol{\epsilon}_{i}$ the $i$-th unit multi-index of the form $(0,\ldots,0,1,0,\ldots,0)$.
Similarly, given a set of moment exponents $\beta_{k,i}$ for the higher-order moments required for the flux $\mathbf{F}$, we define
$\mathbf{q}(\mathbf{v}_l) = \left(q_{1}(\mathbf{v}_l),\ldots,q_{M'}(\mathbf{v}_l)\right)^{T}$ with $q_{k}(\mathbf{v}_l) := \prod_{i} v_{l,i}^{\beta_{k,i}}$, and the
corresponding moment measurement matrix $\left(\mathbf{A}_{\mathrm{next}}\right)_{kl} = \Delta v_{l}q_{k}(\mathbf{v}_{l})$.

Note that each row of $\mathbf{A}$ measures a single monomial. In $d>1$ dimensions the specific energy, which corresponds to
$\|\mathbf{v}\|^{2} = \sum_{i=1}^{d}v_{i}^{2}$, is therefore not a single row of $\mathbf{A}$: it is constrained precisely when all $d$ diagonal second-order moments
$m_{2\boldsymbol{\epsilon}_{i}}$, $i = 1,\ldots,d$, are constrained individually. Throughout the analysis below, the phrase ``the energy is among the constrained
moments'' is to be read in this sense.

We find $\mathbf{f}$ via constrained optimization, similar to the classical maximum entropy approach~\cite{levermore1996moment}.
However, we decompose $\mathbf{f}$ into a product of a (known) weighting function $\mathbf{w}$ and an unknown $\mathbf{g}$, i.e.
we write $\mathbf{f} = \mathbf{w} \odot \mathbf{g}$.
The $\mathbf{w}$ can be set to $\mathbf{1}$, recovering the discrete version of the maximum entropy approach, or to a more sophisticated weighting which improves the conditioning of the optimization problem. For example, the weighting function can be chosen as the local Maxwellian computed on the basis of the lower-order moments (density, momentum, energy), and this is the approach considered in the present work, although most of the statements below are valid for any positive-valued weighting function.
This decomposition, first suggested in~\cite{oblapenko2026sparse}, also makes it possible to enforce a sparse structure on $\mathbf{g}$ (and as a consequence, on $\mathbf{f}$), thus allowing for more efficient computation of the moments, fluxes, and more memory-efficient storage of the distribution. Additionally, sparsity also allows for quantitative assessment of whether the discrete support of the distribution can be reduced without significant loss of accuracy.

We find the values $\mathbf{g}$ by solving the following optimization problem~\cite{oblapenko2026sparse}
\begin{equation}
  \begin{aligned}
      \min_{\substack{g_{l}}} \quad & \sum_{l} \Delta v_{l} w_{l} g_{l} \log \left(w_{l} g_{l}\right) + \lambda |\mathbf{g}|_1\\
  \textrm{s.t.}  \quad & \hat{\mathbf{A}} \mathbf{g} = \mathbf{m},\\
  \textrm{s.t.} \quad & g_{l} \ge 0.
  \end{aligned}\label{eq:optimization-spec}
\end{equation}
Here $\hat{\mathbf{A}}$ is the modified moment measurement matrix, with $\hat{\mathbf{A}}_{ij} = \mathbf{A}_{ij} w_j$. It should be the moment measurement matrix $\mathbf{A}$ is defined only by the
set of moments acting as constraints and the velocity space discretization, whereas $\hat{\mathbf{A}}$ may potentially vary with the weights $\mathbf{w}$.

This leads to the following sequence of mappings: $\mathbf{m} \mapsto (\mathbf{w}, \mathbf{g}) \mapsto \mathbf{f} \mapsto \mathbf{m}_{\mathrm{next}}$, where $\mathbf{m}_{\mathrm{next}}$ are the higher-order moments appearing in the flux function. They can be computed as $\mathbf{A}_{\mathrm{next}} \mathbf{f}$.

In the analysis below, we assume the following holds:
\begin{enumerate}
    \item The matrix $\mathbf{A}$ is fixed.\label{assumption:fixed-matrix}
    \item $N>M$ and $\mathrm{rank}(\mathbf{A}) = M$, i.e. the set of constraints is underdetermined and the moment measurement matrix defining the constraints has full row rank.\label{assumption:rank}
    \item Density (which is the zeroth-order moment $m_{0,\ldots,0}$) is included in the set of conserved moments, i.e. the zero multi-index is among the $\boldsymbol{\alpha}_{k}$, so that $\mathbf{A}_{kl} = \Delta v_{l}$ for all $l$ for that row $k$.\label{assumption:density-constraint}
    \item The weighting function $w(\mathbf{v})$ is positive-valued.\label{assumption:positive-weight}
    \item The underlying velocity grid, i.e. the nodes $v_l$ and associated $\Delta v_{l}$ are fixed. \label{assumption:fixed-grid}
\end{enumerate}

\subsection{Existence and uniqueness of solution}
We denote the realizability cone $\mathcal{R}_{M} = \{\mathbf{A}\mathbf{f} | \mathbf{f} \in \mathbb{R}^{N}_{>0} \} \subset \mathbb{R}^{M}$. Under
assumptions~\ref{assumption:fixed-matrix}--\ref{assumption:fixed-grid}, $\mathcal{R}_{M}$ is an open, convex, pointed cone. Indeed, it is a convex
cone as the linear image of the convex cone $\mathbb{R}^{N}_{>0}$; it is pointed because assumption~\ref{assumption:density-constraint} forces
$m_{0,\ldots,0} = \sum_{l}\Delta v_{l}f_{l} > 0$ for every $\mathbf{m}\in\mathcal{R}_{M}$, so that $\mathcal{R}_{M}\cap\left(-\mathcal{R}_{M}\right) = \varnothing$;
and it is open because relative interiors commute with linear images~\cite{rockafellar}, so that
\begin{equation}
    \mathbf{A}\left(\mathbb{R}^{N}_{>0}\right) = \mathbf{A}\left(\mathrm{relint}\mathbb{R}^{N}_{\ge 0}\right) = \mathrm{relint}\mathbf{A}\left(\mathbb{R}^{N}_{\ge 0}\right) = \mathrm{relint}\mathcal{R}_{M},
\end{equation}
where $\mathrm{relint}$ denotes the relative interior of a set. Assumption~\ref{assumption:rank} makes $\mathcal{R}_{M}$ full-dimensional in $\mathbb{R}^{M}$, so that its relative interior coincides
with its interior, and thus $\mathcal{R}_{M}$ is open. The reader is referred to~\cite{hauck2008convex} for an analogous proof in the continuous setting.

\begin{theorem}\label{thm:existence}
    Assumptions~\ref{assumption:fixed-matrix}--\ref{assumption:fixed-grid} hold, and $\mathbf{m} \in \mathcal{R}_{M}$. Then there exists a unique solution to the primal problem~\eqref{eq:optimization-spec}, and the solution is positive.
\end{theorem}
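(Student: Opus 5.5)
The plan is the standard direct method: show that the feasible set is nonempty and compact, that the objective is continuous and strictly convex on it, and then use the infinite slope of $x\log x$ at $0$ to rule out zero components. The variable is $\mathbf{g}\in\mathbb{R}^N_{\ge 0}$ with $\hat{\mathbf{A}}\mathbf{g}=\mathbf{m}$, where $\hat{\mathbf{A}}=\mathbf{A}\,\mathrm{diag}(\mathbf{w})$.

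\emph{Feasibility and compactness.} Since $\mathbf{m}\in\mathcal{R}_M$, there is $\mathbf{f}>0$ with $\mathbf{A}\mathbf{f}=\mathbf{m}$. By assumption~\ref{assumption:positive-weight}, $\mathbf{g}_0=\mathbf{f}\oslash\mathbf{w}$ is strictly positive and satisfies $\hat{\mathbf{A}}\mathbf{g}_0=\mathbf{m}$. Hence the feasible set $\mathcal{F}=\{\mathbf{g}\ge 0:\hat{\mathbf{A}}\mathbf{g}=\mathbf{m}\}$ is nonempty, and it meets the open orthant. It is closed and convex. By assumption~\ref{assumption:density-constraint}, every feasible $\mathbf{g}$ satisfies $\sum_l\Delta v_l w_l g_l=m_{0,\ldots,0}$. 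Since $\Delta v_l w_l>0$, each component obeys $0\le g_l\le m_{0,\ldots,0}/(\Delta v_l w_l)$, so $\mathcal{F}$ is bounded and therefore compact.

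\emph{Existence and uniqueness.} With the convention $0\log 0=0$, the objective $J(\mathbf{g})=\sum_l\Delta v_l w_lg_l\log(w_lg_l)+\lambda|\mathbf{g}|_1$ is continuous on $\mathbb{R}^N_{\ge0}$. A minimizer on the compact set $\mathcal{F}$ therefore exists. On $\mathcal{F}$ the term $\lambda|\mathbf{g}|_1=\lambda\sum_l g_l$ is linear. Each entropy summand $x\mapsto \Delta v_lw_lx\log(w_lx)$ has second derivative $\Delta v_lw_l/x>0$ for $x>0$ and is strictly convex on $[0,\infty)$. A separable sum of strictly convex functions is strictly convex, so $J$ is strictly convex on the convex set $\mathcal{F}$, and the minimizer $\mathbf{g}^\ast$ is unique.

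\emph{Positivity.} This is the main step. Suppose $g^\ast_j=0$ for some $j$, and take $\mathbf{g}_0>0$ from the first step. Set $\mathbf{g}_t=(1-t)\mathbf{g}^\ast+t\mathbf{g}_0\in\mathcal{F}$ and $\phi(t)=J(\mathbf{g}_t)$.
\begin{itemize}
\item Each component with $g^\ast_l>0$ contributes a finite one-sided derivative at $t=0^+$, as do the linear $\ell_1$ terms.
\item Each component with $g^\ast_l=0$ has $g_{t,l}=t\,g_{0,l}$. Its entropy term has derivative $\Delta v_lw_lg_{0,l}\bigl(\log(w_ltg_{0,l})+1\bigr)\to-\infty$ as $t\to0^+$.
\end{itemize}
Since $\phi$ is convex, its difference quotients $(\phi(t)-\phi(0))/t$ are bounded above by $\phi'(t)$ for small $t>0$. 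As $\phi'(t)\to-\infty$, this gives $\phi(t)<\phi(0)$ for small $t>0$, contradicting minimality. Hence $\mathbf{g}^\ast>0$, and therefore $\mathbf{f}^\ast=\mathbf{w}\odot\mathbf{g}^\ast>0$.

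The only care needed is in this last step: handling the one-sided derivative at the boundary, and confirming that the nondifferentiability of the $\ell_1$ term causes no problem. It does not, since that term is linear on the nonnegative orthant.
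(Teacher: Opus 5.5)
Your proposal is correct and follows the same route as the paper: compactness of the feasible set from the density constraint, existence and uniqueness from continuity and strict convexity, and positivity via the segment toward a strictly positive feasible point, along which the derivative of the entropy term tends to $-\infty$. Your version is a bit more careful than the paper's in two places. You explicitly take the comparison point strictly positive, which the argument needs. You also use convexity (monotone difference quotients) to turn the divergent derivative into an actual decrease of the objective, which contradicts minimality.
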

\begin{proof}
    Since $\Delta v_l > 0$ and we assume~\ref{assumption:density-constraint}, the set $\{\mathbf{f} \geq 0: \mathbf{A}\mathbf{f}=\mathbf{m}\}$ is compact. The objective function is strictly convex, and therefore a minimizer $\mathbf{f}^{\ast}$ exists and is unique.
    Now assume that $\exists l: f^{\ast}_l = 0$. We can pick a non-negative feasible point $\mathbf{f}^{\ast\ast} \in \mathcal{R}_{M}$ and consider the parameterization $\mathbf{f}(t) = t\mathbf{f}^{\ast\ast} + (1-t)\mathbf{f}^{\ast},\: t\in[0,1]$. Any of the $\mathbf{f}(t)$ is feasible due to the linearity of the constraints.
    
    The derivative of the objective function with respect to $t$ is given by 
    \begin{equation}
        \sum_{l} \Delta v_{l} ({f}^{\ast\ast}_l - {f}^{\ast}_l) (\log \left(t{f}^{\ast\ast}_l + (1-t){f}^{\ast}_l\right) + 1) + \sum_l \partial_t \left|\frac{{f}(t)_l}{w_l}\right|.
    \end{equation}
    We have that there exists an $l$ such that $\lim_{t \to 0} \log \left({f}^{\ast\ast}_l + (1-t){f}^{\ast}_l\right) = -\infty$, since we assumed $\exists l: f^{\ast}_l = 0$.
    All the other terms in the derivative of the objective function with respect to $t$ are finite, since the weighting function is positive-valued. Therefore, a smaller value of the objective function can be obtained than that for the solution $\mathbf{f}^{\ast}$, since in the feasible vicinity of $\mathbf{f}^{\ast}$ the objective function can be made smaller. This contradicts the uniqueness of the solution, thus $\mathbf{f}^{\ast}$ is always strictly positive.
\end{proof}

\begin{corollary}
    Unlike the continuous case~\cite{levermore1996moment,junk1998domain,hauck2008convex}, the discrete problem always has a unique solution.
\end{corollary}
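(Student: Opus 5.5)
The corollary follows directly from Theorem~\ref{thm:existence}. I would make it precise by showing that, in the discrete setting, every admissible moment vector yields a well-posed problem, and by pinpointing where the continuous theory breaks down. The statement is implicitly about $\mathbf{m}\in\mathcal{R}_{M}$. For such $\mathbf{m}$, Theorem~\ref{thm:existence} already gives existence, uniqueness and positivity of the minimizer of~\eqref{eq:optimization-spec}. So the content to establish is that the discrete problem has no analogue of the continuous obstruction: there are no realizable moment vectors for which the entropy minimizer fails to exist.

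The first step is to recall the continuous failure mechanism~\cite{junk1998domain,hauck2008convex}. In the continuous case the feasible set $\{f\ge0:\int f\,\mathbf{p}\,\mathrm{d}\mathbf{v}=\mathbf{m}\}$ is not compact in a topology where the entropy is lower semicontinuous and the moment map is continuous. Mass can escape to $|\mathbf{v}|\to\infty$ while keeping a high-order moment fixed, so the infimum of the entropy over the realizable set need not be attained (Junk's example). The second step contrasts this with the discrete setting. The feasible set $\{\mathbf{g}\ge0:\hat{\mathbf{A}}\mathbf{g}=\mathbf{m}\}$ is a polytope: it is bounded because the density row together with $\Delta v_l w_l>0$ bounds every $g_l$ by $m_{0,\ldots,0}/(\Delta v_l w_l)$. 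The objective, including the $\lambda|\mathbf{g}|_1$ term, is continuous on this closed set and strictly convex. Weierstrass therefore gives existence, and strict convexity gives uniqueness, for every $\mathbf{m}$ for which the polytope is nonempty.

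The third step is to handle the boundary case $\mathbf{m}\in\partial\mathcal{R}_{M}$ with $\mathbf{m}$ still in $\mathbf{A}(\mathbb{R}^N_{\ge0})$, so that the statement holds on the whole closed realizable set. The same compactness and strict convexity argument applies, with $0\log0=0$, so existence and uniqueness persist and only positivity is lost. This fits with Theorem~\ref{thm:existence}, whose positivity argument uses an interior feasible point $\mathbf{f}^{\ast\ast}$, and such a point exists only when $\mathbf{m}$ lies in the open cone.

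The main obstacle is making precise what ``always'' means, since for $\mathbf{m}\notin\mathbf{A}(\mathbb{R}^N_{\ge0})$ the feasible set is empty and no solution can exist. I would phrase the corollary as: for every moment vector realizable by a nonnegative discrete measure on the fixed grid, a unique minimizer exists. The contrast with the continuous case then lies entirely in the compactness supplied by finitely many velocity nodes, which prevents mass from escaping to infinity.
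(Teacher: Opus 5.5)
Your proposal is correct and follows essentially the paper's route. The corollary is read off from Theorem~\ref{thm:existence}, whose proof rests on the same two facts you use: the feasible set is compact (bounded through the density row and $\Delta v_l>0$), and the objective is strictly convex. Your extension to boundary points of the closed cone $\mathbf{A}(\mathbb{R}^{N}_{\ge 0})$, where existence and uniqueness persist but positivity is lost, is a valid addition that the paper does not spell out.
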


\begin{corollary}\label{cor:strong-duality}
    Since the primal problem is convex, and the solution is positive-valued, Slater's condition is fulfilled and strong duality holds. In particular the dual optimum is attained.
\end{corollary}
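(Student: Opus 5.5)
The plan is to rewrite problem~\eqref{eq:optimization-spec} as a convex program with only linear equality constraints, verify Slater's condition in its refined form for such constraints, and read off strong duality and dual attainment from the standard theorem in~\cite{rockafellar}. On $\mathbf{g} \ge 0$ we have $|\mathbf{g}|_1 = \sum_l g_l$, so the $\ell_1$ term is linear on the feasible region. The objective
\begin{equation*}
    \Phi(\mathbf{g}) = \sum_l \Delta v_l w_l g_l \log(w_l g_l) + \lambda \sum_l g_l
\end{equation*}
is then a proper, closed, strictly convex function, and $\mathrm{dom}\,\Phi = \mathbb{R}^N_{\ge 0}$ (with the convention $0\log 0 = 0$). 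The constraint $\hat{\mathbf{A}}\mathbf{g} = \mathbf{m}$ is affine. By Assumption~\ref{assumption:positive-weight}, the map $\mathbf{g}\mapsto \mathbf{w}\odot\mathbf{g}$ is a positive diagonal bijection of $\mathbb{R}^N_{>0}$. Hence $\hat{\mathbf{A}}(\mathbb{R}^N_{>0}) = \mathbf{A}(\mathbb{R}^N_{>0}) = \mathcal{R}_M$, and $\hat{\mathbf{A}}$ has full row rank $M$ by Assumption~\ref{assumption:rank}.

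The key step is exhibiting a Slater point: a feasible $\mathbf{g}$ lying in the relative interior of $\mathrm{dom}\,\Phi$, i.e.\ with all $g_l > 0$. For this I will use Theorem~\ref{thm:existence} directly. Since $\mathbf{m}\in\mathcal{R}_M$, the unique minimizer $\mathbf{f}^\ast$ is strictly positive, so $\mathbf{g}^\ast = \mathbf{f}^\ast \oslash \mathbf{w} > 0$ is feasible and interior. Alternatively, $\mathbf{m}\in\mathcal{R}_M$ already gives some $\mathbf{f}>0$ with $\mathbf{A}\mathbf{f}=\mathbf{m}$ by definition of the cone. I expect this step to be the main obstacle only in the bookkeeping. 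One has to note that the positivity constraints $g_l\ge 0$ are absorbed into the domain of $\Phi$ (or kept as inequality constraints, which are then strictly satisfied). The only other constraints are affine equalities, so the refined Slater condition requires nothing more than relative-interior feasibility.

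With Slater's condition in hand, I will invoke the Lagrangian duality theorem (e.g.\ Rockafellar, Cor.~28.2.2/Thm.~28.2). The primal optimal value is finite: it is attained by Theorem~\ref{thm:existence} and bounded below because $x\log x$ is bounded below on a compact feasible set. The theorem therefore gives zero duality gap and the existence of a Lagrange multiplier vector $\boldsymbol{\eta}^\ast\in\mathbb{R}^M$ attaining the dual optimum. The dual function is
\begin{equation*}
    \inf_{\mathbf{g}\ge 0}\left\{\Phi(\mathbf{g}) - \boldsymbol{\eta}^T(\hat{\mathbf{A}}\mathbf{g}-\mathbf{m})\right\}.
\end{equation*}
As a consistency check I will show that the multiplier is explicit. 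Since $\mathbf{g}^\ast>0$, the KKT stationarity condition holds with zero multipliers for the nonnegativity constraints:
\begin{equation*}
    \Delta v_l w_l\left(\log(w_l g_l^\ast)+1\right) + \lambda = \Delta v_l w_l\,\boldsymbol{\eta}^T\mathbf{p}(\mathbf{v}_l).
\end{equation*}
This is a linear system for $\boldsymbol{\eta}$ with $N > M$ equations. Its solvability is exactly what the abstract theorem guarantees, and full row rank of $\mathbf{A}$ makes $\boldsymbol{\eta}^\ast$ unique. This gives the attained dual optimum concretely and completes the corollary.
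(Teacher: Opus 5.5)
Your proposal is correct and takes the same route as the paper. Convexity of the problem, together with a strictly positive feasible point, gives Slater's condition for the affine constraints and hence zero duality gap with an attained dual optimum; the positive point can be the minimizer from Theorem~\ref{thm:existence}, or directly any $\mathbf{f}>0$ with $\mathbf{A}\mathbf{f}=\mathbf{m}$. You simply make explicit what the paper leaves implicit, namely absorbing $g_l\ge 0$ into the domain, checking that the primal value is finite, and identifying the multiplier through the KKT stationarity condition.
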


To solve~\eqref{eq:optimization-spec}, we can therefore use the unconstrained dual problem, and for more details refer the reader to~\cite{oblapenko2026sparse}). The dual problem is given by
\begin{equation}
    \min \left(\mathbf{y}^T \mathbf{m}  + \sum_l \Delta v_l z_l \right) =: \min \Psi(\mathbf{y}).\label{eq:dual-targ}
\end{equation}
Here \begin{equation}
    z_l(\mathbf{y}) = \exp\left(-1 - (\lambda + s_l)\kappa_l\right),
\end{equation}
where $\kappa_l = 1 / (\Delta v_{i} w_{l})$ and $s_l$ is the $l$-th component of the vector $\hat{\mathbf{A}}^T \mathbf{y}$. We note that strictly speaking, the dual problem is given by $\max (-\Psi(\mathbf{y}))$, but we will use the formulation in~\eqref{eq:dual-targ} for convenience. We also note that the dual problem objective is, strictly speaking, also dependent on $\mathbf{m}$, i.e. $\Psi(\mathbf{y}) = \Psi(\mathbf{y}; \mathbf{m})$; however, we will drop the dependence on $\mathbf{m}$ in the notation unless needed for clarity.
Having computed the solution $\mathbf{y}$ of the dual problem, we find the primal solution as
\begin{equation}
    g_l = \frac{1}{w_l} z_l(\mathbf{y}).\label{eq:g_i_from_y}
\end{equation}
The dual problem is solved via a Newton method with Armijo backtracking line search, and the reader is referred to~\cite{oblapenko2026sparse} for more details. We note that poor conditioning of the dual problem has been observed in~\cite{schaerer201735}, and the authors proposed using a change of basis to ensure numerical stability at equilibrium. In the present work, the conditioning is improved by row equilibration of the moment measurement matrix $\hat{\mathbf{A}}$ as described in~\cite{oblapenko2026sparse}.

\subsection{Smoothness of the closure}\label{subsec:smoothness}
Using
$\left(\hat{\mathbf{A}}^{T}\mathbf{y}\right)_{l} = \sum_{k} y_{k}\mathbf{A}_{kl}w_{l} = \Delta v_{l}w_{l}\mathbf{y}^{T}\mathbf{p}(\mathbf{v}_{l})$, the
reconstructed distribution function $f_{l} = w_{l}g_{l}$ takes the form
\begin{equation}
    f_{l}(\mathbf{y}) = \tilde{\phi}_{l}\exp\left(-\mathbf{y}^{T}\mathbf{p}(\mathbf{v}_{l})\right),
    \qquad
    \tilde{\phi}_{l} := \exp\left(-1 - \lambda\kappa_{l}\right) > 0.\label{eq:gibbs-form}
\end{equation}
Neither the $L_{1}$ penalty $\lambda |g|_1$ nor the weighting $\mathbf{w}$ therefore changes the functional form of the closure, the feasible set, or the realizability
domain. Two consequences are worth noting. Firstly,
sparsity of the reconstruction is never exact, since $\tilde{\phi}_{l}>0$ forces $f_{l}>0$. Secondly, for $\lambda = 0$ one has
$\tilde{\boldsymbol{\phi}} = e^{-1}$ and the primal solution does not depend on $\mathbf{w}$ at all; the weighting then functions purely as a
preconditioner through $\hat{\mathbf{A}}$, the initialization, and the row scaling.

We introduce the Gram matrices computed on the basis of the solution $\mathbf{f}^{\ast}$ to the optimization problem~(\ref{eq:optimization-spec}):
\begin{equation}
    \mathcal{H} := \left\langle \mathbf{p}\mathbf{p}^{T}\right\rangle,\qquad
    \mathcal{B} := \left\langle \mathbf{q}\mathbf{p}^{T}\right\rangle,\qquad
    \mathcal{K}^{(i)} := \left\langle v_{i}\mathbf{p}\mathbf{p}^{T}\right\rangle,\quad i = 1,\ldots,d,\label{eq:gram-matrices}
\end{equation}
where $v_{i}$ denotes the $i$-th component of the integration variable, that is, $\mathcal{K}^{(i)}$ is defined by the discrete integration of the $i$-th component of all the discrete velocity nodes. For one-dimensional distributions we drop the superscript and write
$\mathcal{K} := \mathcal{K}^{(1)}$.
From~\eqref{eq:A-def} we have that $\mathcal{H} = \mathbf{A}\mathrm{diag}\left(f^{\ast}_{l}/\Delta v_{l}\right)\mathbf{A}^{T}$, so Theorem~\ref{thm:existence} together with
assumption~\ref{assumption:rank} gives
\begin{equation}
    \mathcal{H} = \mathcal{H}^{T} \succ 0 \qquad \text{for every } \mathbf{m}\in\mathcal{R}_{M},\label{eq:H-spd}
\end{equation}
where $\succ 0$ denotes that the matrix is positive definite.

\begin{theorem}\label{thm:analyticity}
    Let assumptions~\ref{assumption:fixed-matrix}--\ref{assumption:fixed-grid} hold and let $\mathbf{w}$ be fixed. Let $\mathbf{y}^{\ast}$ denote the solution of the dual problem. Then the mappings
    \begin{equation*}
        \mathbf{m} \mapsto \mathbf{y}^{\ast}(\mathbf{m}),\qquad
        \mathbf{m} \mapsto \mathbf{f}^{\ast}(\mathbf{m}),\qquad
        \mathbf{m} \mapsto \mathbf{m}_{\mathrm{next}} = \mathbf{A}_{\mathrm{next}}\mathbf{f}^{\ast}(\mathbf{m})
    \end{equation*}
    are real-analytic on $\mathcal{R}_{M}$, and
    \begin{equation}
        \frac{\partial \mathbf{y}^{\ast}}{\partial \mathbf{m}} = -\mathcal{H}^{-1},\qquad
        \frac{\partial \mathbf{m}_{\mathrm{next}}}{\partial \mathbf{m}} = \mathcal{B}\mathcal{H}^{-1}.\label{eq:closure-jacobian}
    \end{equation}
\end{theorem}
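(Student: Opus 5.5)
The approach is the analytic implicit function theorem, applied to the first-order optimality condition of the dual problem~\eqref{eq:dual-targ}. Using~\eqref{eq:gibbs-form}, write
\begin{equation*}
\Psi(\mathbf{y};\mathbf{m}) = \mathbf{y}^{T}\mathbf{m} + \sum_{l}\Delta v_{l} f_{l}(\mathbf{y})/\Delta v_{l}\cdot \Delta v_l ,
\end{equation*}
or more precisely express $\sum_l \Delta v_l z_l$ through the Gibbs form. Differentiating in $\mathbf{y}$ gives the stationarity condition
\begin{equation*}
G(\mathbf{y},\mathbf{m}) := \mathbf{m} - \mathbf{A}\mathbf{f}(\mathbf{y}) = 0 .
\end{equation*}
I will verify this directly from $z_l = \exp(-1-(\lambda+s_l)\kappa_l)$ and $s_l = \Delta v_l w_l \mathbf{y}^T\mathbf{p}(\mathbf{v}_l)$, which give $\partial z_l/\partial \mathbf{y} = -z_l\,\mathbf{p}(\mathbf{v}_l)/\Delta v_l \cdot \Delta v_l w_l\kappa_l$. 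Hence $\nabla_{\mathbf{y}}\sum_l\Delta v_l z_l = -\sum_l \Delta v_l w_l g_l\,\mathbf{p}(\mathbf{v}_l) = -\mathbf{A}\mathbf{f}$, possibly up to normalization conventions for $\mathbf{A}$ versus $\hat{\mathbf{A}}$ that I would track carefully. By Corollary~\ref{cor:strong-duality} the dual optimum $\mathbf{y}^\ast(\mathbf{m})$ exists for every $\mathbf{m}\in\mathcal{R}_M$ and satisfies $G(\mathbf{y}^\ast,\mathbf{m})=0$. Since $\Psi$ is strictly convex (shown next), this minimizer is unique.

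The map $G$ is real-analytic in $(\mathbf{y},\mathbf{m})$ jointly, being affine in $\mathbf{m}$ and a finite sum of exponentials of linear functions of $\mathbf{y}$. Its Jacobian in $\mathbf{y}$ is
\begin{equation*}
\partial_{\mathbf{y}} G = \sum_l f_l\,\mathbf{p}(\mathbf{v}_l)\mathbf{p}(\mathbf{v}_l)^T\cdot(\text{weights}),
\end{equation*}
which, evaluated at $\mathbf{y}^\ast$, is exactly $\mathcal{H}=\mathbf{A}\,\mathrm{diag}(f^\ast_l/\Delta v_l)\mathbf{A}^T$. The key computation is matching the factor $\Delta v_l$ in $\mathbf{A}$ against the measure $\mu^\ast$. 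By~\eqref{eq:H-spd}, $\mathcal{H}\succ0$ for every $\mathbf{m}\in\mathcal{R}_M$. This positive-definiteness also gives the strict convexity of $\Psi$ needed above, because the Hessian of $\Psi$ is $\mathbf{A}\,\mathrm{diag}(f_l(\mathbf{y})/\Delta v_l)\mathbf{A}^T\succ0$ for every $\mathbf{y}$, by full row rank and $f_l>0$.

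The analytic implicit function theorem now yields, near each $\mathbf{m}_0\in\mathcal{R}_M$, a unique analytic local solution $\mathbf{y}(\mathbf{m})$ of $G=0$. By uniqueness of the minimizer, this local solution coincides with $\mathbf{y}^\ast(\mathbf{m})$, so $\mathbf{y}^\ast$ is real-analytic on the open set $\mathcal{R}_M$. Differentiating $G(\mathbf{y}^\ast(\mathbf{m}),\mathbf{m})=0$ gives
\begin{equation*}
I - \mathcal{H}\,\frac{\partial\mathbf{y}^\ast}{\partial\mathbf{m}}\cdot(\pm1)=0 .
\end{equation*}
With the sign coming from $\partial_{\mathbf{y}}f_l = -f_l\,\mathbf{p}(\mathbf{v}_l)^T$ in~\eqref{eq:gibbs-form}, this gives $\partial\mathbf{y}^\ast/\partial\mathbf{m} = -\mathcal{H}^{-1}$.

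The remaining mappings follow by composition. $\mathbf{f}^\ast(\mathbf{m}) = \mathbf{f}(\mathbf{y}^\ast(\mathbf{m}))$ is analytic as a composition of analytic maps, and so is the linear image $\mathbf{m}_{\mathrm{next}} = \mathbf{A}_{\mathrm{next}}\mathbf{f}^\ast$. By the chain rule,
\begin{equation*}
\frac{\partial\mathbf{m}_{\mathrm{next}}}{\partial\mathbf{m}} = \mathbf{A}_{\mathrm{next}}\,\frac{\partial \mathbf{f}}{\partial\mathbf{y}}\,\frac{\partial\mathbf{y}^\ast}{\partial \mathbf{m}} = \Big(-\sum_l f^\ast_l\,\mathbf{q}(\mathbf{v}_l)\mathbf{p}(\mathbf{v}_l)^T\Big)(-\mathcal{H}^{-1}) = \mathcal{B}\mathcal{H}^{-1} .
\end{equation*}
The main obstacle is bookkeeping rather than conceptual. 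The dual variables are written via $\hat{\mathbf{A}}$, $\kappa_l$ and $\Delta v_l$, while $\mathcal{H}$ and $\mathcal{B}$ are defined through $\langle\cdot\rangle_{\mu^\ast}$ with weights $\Delta v_l f^\ast_l$. I would therefore fix one consistent convention, namely the Gibbs form~\eqref{eq:gibbs-form} with $\mathbf{A}\mathbf{f}=\mathbf{m}$, and check that the Hessian of $\Psi$ and the flux Jacobian reduce exactly to $\mathcal{H}$ and $\mathcal{B}$ rather than to rescaled versions. If a rescaling appears, it can be absorbed into the definition of $\mathbf{y}$, since that leaves the map $\mathbf{m}\mapsto\mathbf{f}^\ast$ unchanged.
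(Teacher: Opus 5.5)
Your proposal is correct and follows the paper's proof step for step: strict convexity of $\Psi$ from $\nabla^{2}\Psi=\mathbf{A}\,\mathrm{diag}(f_{l}/\Delta v_{l})\mathbf{A}^{T}\succ0$, attainment of the dual optimum via Corollary~\ref{cor:strong-duality}, the analytic implicit function theorem applied to $G(\mathbf{y},\mathbf{m})=\mathbf{m}-\mathbf{A}\mathbf{f}(\mathbf{y})$ with $\partial_{\mathbf{y}}G=\mathcal{H}$, identification of the local solution with $\mathbf{y}^{\ast}$ by uniqueness, and composition plus the chain rule for $\mathbf{f}^{\ast}$ and $\mathbf{m}_{\mathrm{next}}$. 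The bookkeeping you flag closes exactly, because $s_{l}\kappa_{l}=\mathbf{y}^{T}\mathbf{p}(\mathbf{v}_{l})$ gives $z_{l}=f_{l}(\mathbf{y})$ and $\partial z_{l}/\partial\mathbf{y}=-z_{l}\mathbf{p}(\mathbf{v}_{l})$ (your stray factor $1/\Delta v_{l}$ there and the missing $\Delta v_{l}$ in your $\mathcal{B}$ sum are slips), so no rescaling of $\mathbf{y}$ is needed; this matters, since your fallback of absorbing a rescaling into $\mathbf{y}$ would change the stated identity $\partial\mathbf{y}^{\ast}/\partial\mathbf{m}=-\mathcal{H}^{-1}$.
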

\begin{proof}
    We have the convex dual objective 
    \begin{equation}
        \Psi(\mathbf{y};\mathbf{m}) = \mathbf{y}^{T}\mathbf{m} + Z(\mathbf{y}),\qquad
        Z(\mathbf{y}) := \sum_{l}\Delta v_{l}\tilde{\phi}_{l}\exp\left(-\mathbf{y}^{T}\mathbf{p}(\mathbf{v}_{l})\right),\label{eq:dual-convex}
    \end{equation}
    where~\eqref{eq:gibbs-form} was used to identify $\sum_{l}\Delta v_{l}z_{l} = Z(\mathbf{y})$.

    The function $Z$ is a finite sum of exponentials of affine functions of $\mathbf{y}$ and is therefore real-analytic on all of $\mathbb{R}^{M}$, with
    \begin{equation}
        \nabla Z(\mathbf{y}) = -\mathbf{A}\mathbf{f}(\mathbf{y}),\qquad
        \nabla^{2}Z(\mathbf{y}) = \sum_{l}\Delta v_{l}f_{l}(\mathbf{y})\mathbf{p}(\mathbf{v}_{l})\mathbf{p}(\mathbf{v}_{l})^{T}
        = \mathbf{A}\mathrm{diag}\left(f_{l}(\mathbf{y})/\Delta v_{l}\right)\mathbf{A}^{T}.
    \end{equation}
    Since $f_{l}(\mathbf{y})>0$ for every finite $\mathbf{y}$ by~\eqref{eq:gibbs-form}, assumption~\ref{assumption:rank} yields $\nabla^{2}Z \succ 0$ everywhere. Therefore
    $\Psi$ is strictly convex in $\mathbf{y}$ and has at most one stationary point, which is its unique global minimizer; by Corollary~\ref{cor:strong-duality} it is
    attained, so $\mathbf{y}^{\ast}(\mathbf{m})$ is well defined and single-valued on $\mathcal{R}_{M}$; the dual problem~\eqref{eq:dual-targ} thus has a unique
    solution, complementing the primal statement of Theorem~\ref{thm:existence}.

    Consider the mapping arising from the stationarity condition:
    \begin{equation}
        G(\mathbf{y},\mathbf{m}) := \nabla_{\mathbf{y}}\Psi(\mathbf{y};\mathbf{m}) = \mathbf{m} - \mathbf{A}\mathbf{f}(\mathbf{y}),
    \end{equation}
    which is jointly real-analytic on $\mathbb{R}^{M}\times\mathcal{R}_{M}$ and $G\left(\mathbf{y}^{\ast}(\mathbf{m}),\mathbf{m}\right)=0$. Its partial
    derivatives there are
    \begin{equation}
        \frac{\partial G}{\partial \mathbf{y}} = \nabla^{2}Z\left(\mathbf{y}^{\ast}\right) = \mathcal{H},\qquad
        \frac{\partial G}{\partial \mathbf{m}} = \mathbf{I},
    \end{equation}
    and $\mathcal{H}$ is invertible by~\eqref{eq:H-spd}. The implicit function theorem  therefore provides a real-analytic local
    solution $\mathbf{m}\mapsto\mathbf{y}^{\ast}(\mathbf{m})$ with
    \begin{equation}
        \frac{\partial \mathbf{y}^{\ast}}{\partial \mathbf{m}}
        = -\left(\frac{\partial G}{\partial \mathbf{y}}\right)^{-1}\frac{\partial G}{\partial \mathbf{m}} = -\mathcal{H}^{-1}.
    \end{equation}
    Since the dual minimizer is unique, the local solution coincides with $\mathbf{y}^{\ast}$ on its domain; as $\mathbf{m}\in\mathcal{R}_{M}$ was arbitrary and
    $\mathcal{R}_{M}$ is open, $\mathbf{y}^{\ast}$ is real-analytic on all of $\mathcal{R}_{M}$.

    Real-analyticity of $\mathbf{m}\mapsto\mathbf{f}^{\ast}(\mathbf{m}) = \mathbf{f}\left(\mathbf{y}^{\ast}(\mathbf{m})\right)$ follows since the composition of
    real-analytic mappings is real-analytic, and differentiating~\eqref{eq:gibbs-form} through the chain rule gives
    \begin{equation}
        \frac{\partial f^{\ast}_{l}}{\partial \mathbf{m}}
        = -f^{\ast}_{l}\mathbf{p}(\mathbf{v}_{l})^{T}\frac{\partial \mathbf{y}^{\ast}}{\partial\mathbf{m}}
        = f^{\ast}_{l}\mathbf{p}(\mathbf{v}_{l})^{T}\mathcal{H}^{-1}.\label{eq:df-dm}
    \end{equation}
    Finally $\mathbf{m}_{\mathrm{next}} = \mathbf{A}_{\mathrm{next}}\mathbf{f}^{\ast}$ is linear in $\mathbf{f}^{\ast}$, and thus also real-analytic, and
    \begin{equation}
        \frac{\partial \mathbf{m}_{\mathrm{next}}}{\partial \mathbf{m}}
        = \sum_{l}\Delta v_{l}\mathbf{q}(\mathbf{v}_{l})f^{\ast}_{l}\mathbf{p}(\mathbf{v}_{l})^{T}\mathcal{H}^{-1}
        = \left\langle \mathbf{q}\mathbf{p}^{T}\right\rangle\mathcal{H}^{-1} = \mathcal{B}\mathcal{H}^{-1}.
    \end{equation}
\end{proof}

\begin{corollary}\label{cor:weight-analytic}
    Let the density, the momentum and the energy be among the constrained moments, and let $\mathbf{w}$ be the shifted Maxwellian fitted to the state,
    \begin{equation}
        w_{l}(\mathbf{m}) = \exp\left(-\beta\left\|\mathbf{v}_{l}-\bar{\mathbf{v}}\right\|^{2}\right),\label{eq:maxwellian-weight}
    \end{equation}
    with $\bar{\mathbf{v}}(\mathbf{m})$ the mean velocity and $\beta = 1/T>0$ fixed by matching the grid-truncated specific energy, i.e. by
    $E\left(\beta,\bar{\mathbf{v}}\right) = e(\mathbf{m})$, where
    \begin{equation}
        \begin{aligned}
            E\left(\beta,\bar{\mathbf{v}}\right) &:= \frac{\sum_{l}\Delta v_{l}\left\|\mathbf{v}_{l}-\bar{\mathbf{v}}\right\|^{2}e^{-\beta\|\mathbf{v}_{l}-\bar{\mathbf{v}}\|^{2}}}
            {\sum_{l}\Delta v_{l}e^{-\beta\|\mathbf{v}_{l}-\bar{\mathbf{v}}\|^{2}}},\\
            e(\mathbf{m}) &:= \sum_{i=1}^{d}\left(\frac{m_{2\boldsymbol{\epsilon}_{i}}}{m_{0,\ldots,0}} - \bar{v}_{i}^{2}\right),\qquad
            \bar{v}_{i} := \frac{m_{\boldsymbol{\epsilon}_{i}}}{m_{0,\ldots,0}},\label{eq:maxwellian-exact-constraints}
        \end{aligned}
    \end{equation}
    Then $\mathbf{m}\mapsto\mathbf{w}(\mathbf{m})$ is real-analytic on the open set
    \begin{equation}
        \mathcal{R}^{w}_{M} := \left\{\mathbf{m}\in\mathcal{R}_{M}:\ \min_{l}\left\|\mathbf{v}_{l}-\bar{\mathbf{v}}(\mathbf{m})\right\|^{2}
        < e(\mathbf{m}) < \frac{\sum_{l}\Delta v_{l}\left\|\mathbf{v}_{l}-\bar{\mathbf{v}}(\mathbf{m})\right\|^{2}}{\sum_{l}\Delta v_{l}}\right\},\label{eq:Rw}
    \end{equation}
    and the closure $\mathbf{m}\mapsto\mathbf{m}_{\mathrm{next}}$ remains real-analytic on $\mathcal{R}^{w}_{M}$.
\end{corollary}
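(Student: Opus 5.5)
The plan is to split the closure into the weight map $\mathbf{m}\mapsto\mathbf{w}(\mathbf{m})$ and the map $(\mathbf{m},\mathbf{w})\mapsto\mathbf{m}_{\mathrm{next}}$, show that each is real-analytic, and conclude by composition.

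\emph{Step 1: the mean velocity and specific energy.} The functions $\bar{\mathbf{v}}(\mathbf{m})$ and $e(\mathbf{m})$ are rational in $\mathbf{m}$. Their only denominator is $m_{0,\ldots,0}$, which is positive on $\mathcal{R}_{M}$ by assumption~\ref{assumption:density-constraint}. Hence both are real-analytic on $\mathcal{R}_{M}$. The set $\mathcal{R}^{w}_{M}$ is defined by strict inequalities between continuous functions of $\mathbf{m}$, intersected with the open set $\mathcal{R}_{M}$, so it is open.

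\emph{Step 2: the temperature parameter $\beta$.} I will solve $E(\beta,\bar{\mathbf{v}})=e$ for $\beta$ with the analytic implicit function theorem. Write $r_l:=\|\mathbf{v}_l-\bar{\mathbf{v}}\|^2$ and let $\pi_l(\beta)\propto\Delta v_l e^{-\beta r_l}$ be the normalized discrete weights. Then $E=\mathbb{E}_\pi[r]$ and
\begin{equation*}
\partial_\beta E=-\mathrm{Var}_\pi(r)\le 0 .
\end{equation*}
The variance is strictly positive unless all $r_l$ coincide. That degenerate case is excluded on $\mathcal{R}^{w}_{M}$, since there $\min_l r_l<e<$ the $\Delta v$-average of $r_l$, which forces the $r_l$ not to be all equal. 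Hence $E$ is strictly decreasing in $\beta$. At $\beta=0$ it equals the $\Delta v$-weighted average of $r_l$, and as $\beta\to\infty$ it tends to $\min_l r_l$. Indeed, the weights concentrate on the minimizing nodes, and if several nodes attain the minimum the limit is still $\min_l r_l$.

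By the intermediate value theorem and monotonicity, there is therefore a unique $\beta\in(0,\infty)$ solving the equation exactly when $\mathbf{m}\in\mathcal{R}^{w}_{M}$. The map $E$ is jointly real-analytic in $(\beta,\bar{\mathbf{v}})$, being a ratio of finite sums of exponentials with positive denominator. Since $\partial_\beta E\neq0$, the analytic implicit function theorem gives that $\beta(\mathbf{m})$ is real-analytic locally. Uniqueness of the solution patches these local solutions into a global real-analytic map on $\mathcal{R}^{w}_{M}$. It follows that $w_l(\mathbf{m})=\exp(-\beta r_l)$ is real-analytic and positive.

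\emph{Step 3: the closure with variable weight.} Theorem~\ref{thm:analyticity} was proved for fixed $\mathbf{w}$, so I will redo its implicit-function argument with $\mathbf{w}$ as an additional parameter. By~\eqref{eq:gibbs-form}, $f_l=\tilde\phi_l\exp(-\mathbf{y}^T\mathbf{p}(\mathbf{v}_l))$ with $\tilde\phi_l=\exp(-1-\lambda/(\Delta v_l w_l))$. Because $w_l>0$, $\tilde\phi_l$ is analytic in $\mathbf{w}$. The map $G(\mathbf{y},\mathbf{m},\mathbf{w})=\mathbf{m}-\mathbf{A}\mathbf{f}(\mathbf{y};\mathbf{w})$ is then jointly analytic, and $\partial_{\mathbf{y}}G=\mathcal{H}\succ0$ as before.

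Existence and uniqueness of $\mathbf{y}^\ast$ for each fixed $\mathbf{w}>0$ comes from Theorem~\ref{thm:existence} and Corollary~\ref{cor:strong-duality}. The implicit function theorem together with uniqueness therefore gives $\mathbf{y}^\ast(\mathbf{m},\mathbf{w})$ real-analytic on $\mathcal{R}_M\times\mathbb{R}^N_{>0}$. Composing with Step 2 and with the linear map $\mathbf{A}_{\mathrm{next}}$ yields analyticity of $\mathbf{m}\mapsto\mathbf{m}_{\mathrm{next}}$ on $\mathcal{R}^{w}_{M}$.

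I expect the main obstacle to be Step 2. The key points there are the strict monotonicity of $E$ and the exact identification of its range with the interval in~\eqref{eq:Rw}, including the behaviour of the limit $\beta\to\infty$ when several nodes attain $\min_l r_l$. The remaining steps are routine applications of the analytic implicit function theorem.
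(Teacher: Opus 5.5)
Your proof is correct and follows the paper's route. Both arguments use rationality of $\bar{\mathbf{v}}$ and $e$, the variance identity $\partial_\beta E=-\mathrm{Var}(r)$, identification of the range of $E$ with the interval in~\eqref{eq:Rw}, the analytic implicit function theorem for $\beta$, and then rerun the dual implicit-function argument with a state-dependent prior $\tilde{\phi}$. You parametrise that prior by $\mathbf{w}$ and compose; the paper feeds its $\mathbf{m}$-dependence directly into $G$.

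The one real difference is how the case of all $r_l$ equal is excluded. You observe that the interval in~\eqref{eq:Rw} is then empty. That is simpler and suffices on $\mathcal{R}^{w}_{M}$. The paper instead derives it from full row rank of $\mathbf{A}$: a nonzero combination of the constant, linear and $v_i^2$ monomials would otherwise vanish on the grid. This gives strict monotonicity of $E$ for every $\bar{\mathbf{v}}$, and hence the interval is never empty.
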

\begin{proof}
    On $\mathcal{R}_{M}$ one has $m_{0,\ldots,0}>0$, so $\bar{\mathbf{v}}$ and $e$ are rational functions of $\mathbf{m}$ with non-vanishing denominator and are
    thus real-analytic there.

    Fix $\bar{\mathbf{v}}\in\mathbb{R}^{d}$ and let $\mu_{\beta}$ be the probability measure on the grid with weights proportional to
    $\Delta v_{l}e^{-\beta\|\mathbf{v}_{l}-\bar{\mathbf{v}}\|^{2}}$, so that $E(\beta,\bar{\mathbf{v}}) = \left\langle \|\mathbf{v}-\bar{\mathbf{v}}\|^{2}\right\rangle_{\mu_{\beta}}$.
    As a ratio of finite sums of exponentials with strictly positive denominator, $E$ is jointly real-analytic in $(\beta,\bar{\mathbf{v}})$, and a direct
    computation gives the standard exponential-family identity
    \begin{equation}
        \frac{\partial E}{\partial \beta} = -\mathrm{Var}_{\mu_{\beta}}\left(\left\|\mathbf{v}-\bar{\mathbf{v}}\right\|^{2}\right) \le 0,
    \end{equation}
    where $\mathrm{Var}_{\mu_{\beta}}$ denotes the variance computed with respect to the measure $\mu_{\beta}$. Equality above is possible if and only if the numbers $\|\mathbf{v}_{l}-\bar{\mathbf{v}}\|^{2}$ are all equal.  The latter cannot happen: if
    $\|\mathbf{v}_{l}-\bar{\mathbf{v}}\|^{2} = r$ for all $l$, then the polynomial $\|\mathbf{v}\|^{2} - 2\bar{\mathbf{v}}^{T}\mathbf{v} + \|\bar{\mathbf{v}}\|^{2} - r$
    vanishes on the entire grid. By hypothesis the multi-indices $\mathbf{0}$, $\boldsymbol{\epsilon}_{i}$ and $2\boldsymbol{\epsilon}_{i}$, $i=1,\ldots,d$, are all
    among the $\boldsymbol{\alpha}_{k}$, so the constant, the linear monomials and $\|\mathbf{v}\|^{2} = \sum_{i}v_{i}^{2}$ all lie in the span of
    $p_{1},\ldots,p_{M}$; the above combination of them is non-zero since the coefficient of each $v_{i}^{2}$ equals one. This means that there exists a non-zero
    vector $\mathbf{c}$ with $\mathbf{c}^{T}\mathbf{p}(\mathbf{v}_{l}) = 0$ for all $l$, therefore $\mathbf{c}^{T}\mathbf{A} = 0$ by~\eqref{eq:A-def}, contradicting
    assumption~\ref{assumption:rank}. Therefore $\partial E/\partial\beta < 0$ strictly.

    Consequently $\beta \mapsto E(\beta,\bar{\mathbf{v}})$ is a real-analytic strictly decreasing bijection of $(0,\infty)$ onto the open interval appearing
    in~\eqref{eq:Rw}: as $\beta\to 0^{+}$ the measure $\mu_{\beta}$ tends to the $\Delta v$-weighted uniform measure on the grid, giving the right endpoint, and as
    $\beta\to\infty$ it concentrates on the nodes closest to $\bar{\mathbf{v}}$, giving the left endpoint. Applying the analytic implicit function theorem to
    $\left(\beta,\bar{\mathbf{v}},e\right)\mapsto E(\beta,\bar{\mathbf{v}}) - e$, whose $\beta$-derivative is non-zero, shows that
    $\left(\bar{\mathbf{v}},e\right)\mapsto\beta$ is real-analytic wherever $e$ lies in that interval; composing with the real-analytic
    $\mathbf{m}\mapsto\left(\bar{\mathbf{v}}(\mathbf{m}),e(\mathbf{m})\right)$ gives real-analyticity of $\beta(\mathbf{m})$, and therefore of $\mathbf{w}(\mathbf{m})$
    by~\eqref{eq:maxwellian-weight}. The set $\mathcal{R}^{w}_{M}$ is open, being the intersection of the open set $\mathcal{R}_{M}$ with the preimage of an open
    condition under continuous mappings.

    Finally, $\tilde{\boldsymbol{\phi}}$ in~\eqref{eq:gibbs-form} now depends real-analytically on $\mathbf{m}$, so $G(\mathbf{y},\mathbf{m})$ in the proof of
    Theorem~\ref{thm:analyticity} remains jointly real-analytic; its $\mathbf{y}$-derivative is still $\mathcal{H}\succ 0$, since the $\mathbf{m}$-dependence of
    $\tilde{\boldsymbol{\phi}}$ does not enter it. The implicit function theorem applies verbatim and $\mathbf{m}\mapsto\mathbf{m}_{\mathrm{next}}$ is real-analytic
    on $\mathcal{R}^{w}_{M}$; only the Jacobian~\eqref{eq:closure-jacobian} acquires an extra term, computed in Corollary~\ref{cor:jacobian-varying-w} below.
\end{proof}

\begin{remark}
    We note that the lower bound on temperature coincides with that given by~\cite[Prop.~4.1]{mieussens2000discrete}; the upper bound is different, as we assume a non-negative temperature, whereas in~\cite{mieussens2000discrete}, no restriction on the temperature is imposed (see Remark 4.1 therein).
\end{remark}

With a state-dependent weighting the closure thus remains real-analytic, but its derivatives acquire correction terms relative to~\eqref{eq:closure-jacobian}. Since
these derivatives are the basis of the hyperbolicity analysis of Section~\ref{subsec:hyperbolicity}, we record them here in the general multi-dimensional setting.

\begin{corollary}\label{cor:jacobian-varying-w}
    Let the hypotheses of Corollary~\ref{cor:weight-analytic} hold, let $\mathbf{m}\in\mathcal{R}^{w}_{M}$, and define
    \begin{equation}
        \mathcal{G} := \sum_{l}\Delta v_{l}f^{\ast}_{l}\,\mathbf{p}(\mathbf{v}_{l})\left(\frac{\partial \kappa_{l}}{\partial\mathbf{m}}\right)^{T}.\label{eq:G-def}
    \end{equation}
    Then
    \begin{equation}
        -\frac{\partial\mathbf{y}^{\ast}}{\partial\mathbf{m}} = \mathcal{H}^{-1}\left(\mathbf{I}+\lambda\mathcal{G}\right),\qquad
        \frac{\partial f^{\ast}_{l}}{\partial\mathbf{m}}
        = f^{\ast}_{l}\left[\mathbf{p}(\mathbf{v}_{l})^{T}\mathcal{H}^{-1}\left(\mathbf{I}+\lambda\mathcal{G}\right) - \lambda\left(\frac{\partial\kappa_{l}}{\partial\mathbf{m}}\right)^{T}\right],\label{eq:dy-perturbed}
    \end{equation}
    and consequently, for any vector-valued function $\mathbf{r}$ on the grid nodes,
    \begin{equation}
        \frac{\partial \left\langle\mathbf{r}\right\rangle}{\partial\mathbf{m}}
        = \left\langle\mathbf{r}\mathbf{p}^{T}\right\rangle\mathcal{H}^{-1}\left(\mathbf{I}+\lambda\mathcal{G}\right)
        - \lambda\sum_{l}\Delta v_{l}f^{\ast}_{l}\,\mathbf{r}(\mathbf{v}_{l})\left(\frac{\partial\kappa_{l}}{\partial\mathbf{m}}\right)^{T}.\label{eq:moment-map-varying-w}
    \end{equation}
    In particular, taking $\mathbf{r} = \mathbf{q}$,
    \begin{equation}
        \frac{\partial \mathbf{m}_{\mathrm{next}}}{\partial\mathbf{m}}
        = \mathcal{B}\mathcal{H}^{-1}\left(\mathbf{I}+\lambda\mathcal{G}\right)
        - \lambda\sum_{l}\Delta v_{l}f^{\ast}_{l}\,\mathbf{q}(\mathbf{v}_{l})\left(\frac{\partial\kappa_{l}}{\partial\mathbf{m}}\right)^{T},\label{eq:closure-jacobian-varying-w}
    \end{equation}
    which reduces to~\eqref{eq:closure-jacobian} whenever $\lambda\mathcal{G}$ vanishes. For $\lambda = 0$ the closure does not depend on $\mathbf{w}$ at all, every
    correction term vanishes, and Theorem~\ref{thm:analyticity} applies verbatim.
\end{corollary}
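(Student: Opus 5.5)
The plan is to differentiate the stationarity condition of the dual problem with respect to $\mathbf{m}$, now keeping track of the $\mathbf{m}$-dependence of $\kappa_{l}$ through the weights. By Corollary~\ref{cor:weight-analytic} all quantities involved are real-analytic on $\mathcal{R}^{w}_{M}$, so ordinary chain-rule differentiation is justified.

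\textbf{Step 1: derivative of the Gibbs form.} From~\eqref{eq:gibbs-form} we write
\begin{equation*}
    f_{l}(\mathbf{y},\mathbf{m}) = \exp\left(-1-\lambda\kappa_{l}(\mathbf{m}) - \mathbf{y}^{T}\mathbf{p}(\mathbf{v}_{l})\right).
\end{equation*}
Only $\tilde{\phi}_{l}$ depends on $\mathbf{m}$ explicitly; $\mathbf{p}$ is fixed by assumption~\ref{assumption:fixed-matrix}. The total derivative along $\mathbf{y}=\mathbf{y}^{\ast}(\mathbf{m})$ is then
\begin{equation*}
    \frac{\partial f^{\ast}_{l}}{\partial\mathbf{m}} = -f^{\ast}_{l}\left[\mathbf{p}(\mathbf{v}_{l})^{T}\frac{\partial\mathbf{y}^{\ast}}{\partial\mathbf{m}} + \lambda\left(\frac{\partial\kappa_{l}}{\partial\mathbf{m}}\right)^{T}\right].
\end{equation*}

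\textbf{Step 2: solve for $\partial\mathbf{y}^{\ast}/\partial\mathbf{m}$.} Differentiate the identity $\mathbf{A}\mathbf{f}^{\ast}(\mathbf{m})=\mathbf{m}$, i.e.\ $\sum_{l}\Delta v_{l}\mathbf{p}(\mathbf{v}_{l})f^{\ast}_{l}=\mathbf{m}$, and insert Step~1. This gives
\begin{equation*}
    -\mathcal{H}\frac{\partial\mathbf{y}^{\ast}}{\partial\mathbf{m}} - \lambda\mathcal{G} = \mathbf{I},
\end{equation*}
where the two terms are identified through the definitions~\eqref{eq:gram-matrices} and~\eqref{eq:G-def}. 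Since $\mathcal{H}\succ0$ by~\eqref{eq:H-spd}, we can invert it. I expect the main obstacle, really just a bookkeeping point, to be the sign: with $\kappa$ entering as $-\lambda\kappa$, the term is $-\lambda\mathcal{G}$ on the left-hand side. Moving it across yields
\begin{equation*}
    -\frac{\partial\mathbf{y}^{\ast}}{\partial\mathbf{m}} = \mathcal{H}^{-1}\left(\mathbf{I}+\lambda\mathcal{G}\right),
\end{equation*}
which is the first identity in~\eqref{eq:dy-perturbed}. The same equation also follows from the implicit function theorem applied to $G(\mathbf{y},\mathbf{m})$ in the proof of Theorem~\ref{thm:analyticity}, whose explicit $\mathbf{m}$-derivative is now $\mathbf{I}+\lambda\mathcal{G}$ rather than $\mathbf{I}$.

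\textbf{Step 3: substitute back.} Inserting the result of Step~2 into Step~1 gives the second formula in~\eqref{eq:dy-perturbed}.

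\textbf{Step 4: moments of a grid function.} For any grid function $\mathbf{r}$ we have $\left\langle\mathbf{r}\right\rangle=\sum_{l}\Delta v_{l}\mathbf{r}(\mathbf{v}_{l})f^{\ast}_{l}$, which is linear in $\mathbf{f}^{\ast}$ with fixed coefficients. Summing the second formula of~\eqref{eq:dy-perturbed} against $\Delta v_{l}\mathbf{r}(\mathbf{v}_{l})$ gives~\eqref{eq:moment-map-varying-w} directly. Choosing $\mathbf{r}=\mathbf{q}$ yields~\eqref{eq:closure-jacobian-varying-w}.

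\textbf{Step 5: reductions.} If $\lambda\mathcal{G}=0$, the first correction term in~\eqref{eq:closure-jacobian-varying-w} is zero. The trailing term also carries the factor $\lambda$, so I will state the reduction under the natural reading that both corrections are switched off by the same condition, notably for $\lambda=0$. For $\lambda=0$, $\tilde{\boldsymbol{\phi}}=e^{-1}$ is constant, so $\mathbf{f}^{\ast}$ is independent of $\mathbf{w}$, as noted after~\eqref{eq:gibbs-form}. Hence Theorem~\ref{thm:analyticity} applies verbatim.
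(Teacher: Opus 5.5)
Your proposal is correct and follows the paper's proof essentially step for step. You differentiate the logarithm of \eqref{eq:gibbs-form} with the $\mathbf{m}$-dependent $\kappa_l$, differentiate the moment constraint to obtain $\mathbf{I} = -\mathcal{H}\,\partial\mathbf{y}^{\ast}/\partial\mathbf{m} - \lambda\mathcal{G}$, substitute back, sum against $\Delta v_l\mathbf{r}(\mathbf{v}_l)$, and use $\tilde{\boldsymbol{\phi}} = e^{-1}$ for $\lambda=0$. Your caveat in Step~5 is also justified: $\lambda\mathcal{G}=0$ alone does not remove the trailing $\mathbf{q}$-sum in \eqref{eq:closure-jacobian-varying-w}, so the reduction should be read as holding when both $\lambda$-corrections vanish (e.g.\ $\lambda=0$ or frozen weights), and the paper's proof indeed only treats $\lambda=0$.
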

\begin{proof}
    By Corollary~\ref{cor:weight-analytic} the mappings $\mathbf{m}\mapsto\kappa_{l}(\mathbf{m})$ are real-analytic on $\mathcal{R}^{w}_{M}$, and taking the logarithm
    of~\eqref{eq:gibbs-form} we get
    \begin{equation}
        \log f^{\ast}_{l} = - 1 - \lambda\kappa_{l}(\mathbf{m}) - \left(\mathbf{y}^{\ast}\right)^{T}\mathbf{p}(\mathbf{v}_{l}),
    \end{equation}
    so that
    \begin{equation}
        \frac{\partial f^{\ast}_{l}}{\partial\mathbf{m}}
        = f^{\ast}_{l}\left[-\mathbf{p}(\mathbf{v}_{l})^{T}\frac{\partial\mathbf{y}^{\ast}}{\partial\mathbf{m}} - \lambda\left(\frac{\partial\kappa_{l}}{\partial\mathbf{m}}\right)^{T}\right].\label{eq:df-varying-raw}
    \end{equation}
    Differentiating the constraint $\sum_{l}\Delta v_{l}\mathbf{p}(\mathbf{v}_{l})f^{\ast}_{l} = \mathbf{m}$ with respect to $\mathbf{m}$ and
    inserting~\eqref{eq:df-varying-raw} gives $\mathbf{I} = -\mathcal{H}\,\partial\mathbf{y}^{\ast}/\partial\mathbf{m} - \lambda\mathcal{G}$, which is the first identity
    in~\eqref{eq:dy-perturbed}; substituting it back into~\eqref{eq:df-varying-raw} gives the second. Multiplying the second identity by
    $\Delta v_{l}\mathbf{r}(\mathbf{v}_{l})$ and summing over $l$ yields~\eqref{eq:moment-map-varying-w}. Finally, for $\lambda = 0$ we have
    $\tilde{\boldsymbol{\phi}} = e^{-1}$ in~\eqref{eq:gibbs-form}, so $\mathbf{f}^{\ast}$ is independent of $\mathbf{w}$, and every $\lambda$-proportional term above
    vanishes.
\end{proof}

\subsection{Hyperbolicity of the closed system in one dimension}\label{subsec:hyperbolicity}

We now specialize to $d = 1$, with distinct nodes $v_{1} < v_{2} < \ldots < v_{N}$, constraint monomials $p_{k}(v) = v^{k-1}$, $k=1,\ldots,M$ (i.e. $\alpha_{k,1} = k-1$), and a single
predicted moment $\mathbf{q}(v) = v^{M}$. The state vector is $\mathbf{m} = (m_{1},\ldots,m_{M})^{T}$ with $m_{k} = \left\langle v^{k-1}\right\rangle$, and the
closed moment system is
\begin{equation}
    \partial_{t}\mathbf{m} + \partial_{x}\mathbf{F}(\mathbf{m}) = 0,\label{eq:moment-system-1d}
\end{equation}
with the flux given by
\begin{equation}
    \mathbf{F}(\mathbf{m}) = \left(m_{2},\ldots,m_{M},\ \mathbf{A}_{\mathrm{next}}\mathbf{f}^{\ast}(\mathbf{m})\right)^{T}
    = \sum_{l}\Delta v_{l}v_{l}\mathbf{p}(v_{l})f^{\ast}_{l}.\label{eq:moment-system-1d-closure}
\end{equation}
The second expression for $\mathbf{F}$ holds because $vp_{k} = p_{k+1}$ for $k\le M-1$ and $vp_{M} = q$.

The closure~\eqref{eq:optimization-spec} is an instance of ADVM~\cite{pichard2025entropy}. Indeed, writing $\mathsf{m}_{l} := \Delta v_{l}f_{l}$ for the mass carried by node $l$, the objective
of~\eqref{eq:optimization-spec} reads
\begin{equation}
    \sum_{l}\eta_{l}(\mathsf{m}_{l}),\qquad \eta_{l}(\mathsf{m}_l) := \mathsf{m}_l\log \mathsf{m}_l - c_{l}\mathsf{m}_l,\qquad
    c_{l} := \log \Delta v_{l} - \lambda\kappa_{l},\label{eq:advm-identification}
\end{equation}
a sum of node-wise strictly convex functions minimized under the linear moment constraints $\mathbf{A}\mathbf{f} = \mathbf{m}$, with more nodes than moments
($N>M$). Both the $L_{1}$ penalty and the weighting $\mathbf{w}$ enter only through the linear coefficients $c_{l}$, in accordance
with~\eqref{eq:gibbs-form}. Part~(i) of the following theorem is therefore a specialization of~\cite[Prop.~4.2]{pichard2025entropy}; we record it in the present
notation because part~(ii), which is new, relies on the identity $\mathbf{F}' = \mathcal{K}\mathcal{H}^{-1}$.

\begin{theorem}\label{thm:hyperbolicity-fixed-w}
    Let $d=1$, let $\mathbf{w}$ be fixed, let assumptions~\ref{assumption:fixed-matrix}--\ref{assumption:fixed-grid} hold, and let
    $\mathbf{m}\in\mathcal{R}_{M}$. Define $h_l(f_l) := f_l\log(f_l) + \lambda\kappa_{l}f_l$ and
    \begin{equation}
        \eta(\mathbf{m}) := \sum_{l}\Delta v_{l}h_l\left(f^{\ast}_{l}(\mathbf{m})\right),\qquad
        \psi(\mathbf{m}) := \sum_{l}\Delta v_{l}v_{l}h_l\left(f^{\ast}_{l}(\mathbf{m})\right).\label{eq:entropy-pair}
    \end{equation}
    Then:
    \begin{enumerate}
        \item $\eta$ is strictly convex on $\mathcal{R}_{M}$, with $\nabla\eta = -\mathbf{y}^{\ast}$ and $\nabla^{2}\eta = \mathcal{H}^{-1}\succ0$; the pair
              $(\eta,\psi)$ is an entropy/entropy-flux pair for~\eqref{eq:moment-system-1d}--\eqref{eq:moment-system-1d-closure}; and
              $\mathbf{F}'(\mathbf{m}) = \mathcal{K}\mathcal{H}^{-1}$, so that the system is symmetric hyperbolic, $\mathcal{H}^{-1}$ being a
              symmetrizer~\cite{godlewski2013numerical};
        \item the characteristic speeds of~\eqref{eq:moment-system-1d}--\eqref{eq:moment-system-1d-closure} are the $M$ nodes $\theta_{1},\ldots,\theta_{M}$ of the $M$-point Gauss quadrature rule associated
              with the reconstructed measure $\mu^{\ast}$. They are real and \emph{pairwise distinct}, so that the system is strictly hyperbolic at every
              $\mathbf{m}\in\mathcal{R}_{M}$, and they satisfy the state-independent bound
              \begin{equation}
                  v_{1} < \theta_{1} < \theta_{2} < \ldots < \theta_{M} < v_{N}.\label{eq:speed-bound}
              \end{equation}
    \end{enumerate}
\end{theorem}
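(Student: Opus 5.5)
For part (i) I would rely on the Gibbs form~\eqref{eq:gibbs-form} and on Theorem~\ref{thm:analyticity}. Write $\log f^{\ast}_{l} = -1-\lambda\kappa_{l}-(\mathbf{y}^{\ast})^{T}\mathbf{p}(v_{l})$. Then $h_l'(f^{\ast}_l) = \log f^{\ast}_l + 1 + \lambda\kappa_l = -(\mathbf{y}^{\ast})^{T}\mathbf{p}(v_l)$, and the chain rule together with~\eqref{eq:df-dm} gives
\[
\nabla\eta = \sum_l \Delta v_l h_l'(f^{\ast}_l)\,\partial_{\mathbf{m}} f^{\ast}_l = -\Big(\sum_l \Delta v_l f^{\ast}_l \mathbf{p}\mathbf{p}^T\Big)^{\!T}\dots
\]
More directly, $\nabla\eta^{T} = -(\mathbf{y}^{\ast})^{T}\sum_l\Delta v_l\mathbf{p}(v_l)\partial_{\mathbf{m}}f^{\ast}_l = -(\mathbf{y}^{\ast})^{T}\partial_{\mathbf{m}}(\mathbf{A}\mathbf{f}^{\ast}) = -(\mathbf{y}^{\ast})^{T}$, since $\mathbf{A}\mathbf{f}^{\ast}=\mathbf{m}$. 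Differentiating once more and using $\partial\mathbf{y}^{\ast}/\partial\mathbf{m}=-\mathcal{H}^{-1}$ gives $\nabla^2\eta=\mathcal{H}^{-1}\succ0$ by~\eqref{eq:H-spd}.

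The same computation with the extra factor $v_l$ gives $\nabla\psi^T = -(\mathbf{y}^{\ast})^T\sum_l\Delta v_l v_l\mathbf{p}(v_l)\partial_{\mathbf{m}}f^{\ast}_l = -(\mathbf{y}^{\ast})^T\mathbf{F}'(\mathbf{m})$, using the second expression in~\eqref{eq:moment-system-1d-closure}. This is exactly the compatibility condition $\nabla\psi^T=\nabla\eta^T\mathbf{F}'$. Also, $\mathbf{F}' = \sum_l \Delta v_l v_l \mathbf{p}\, f^{\ast}_l\mathbf{p}^T\mathcal{H}^{-1} = \mathcal{K}\mathcal{H}^{-1}$ by~\eqref{eq:df-dm}. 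Since $\mathcal{K}$ and $\mathcal{H}$ are symmetric, $\mathcal{H}^{-1}\mathbf{F}' = \mathcal{H}^{-1}\mathcal{K}\mathcal{H}^{-1}$ is symmetric and $\mathcal{H}^{-1}\succ0$, so the system is symmetric hyperbolic.

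For part (ii), the eigenvalues of $\mathcal{K}\mathcal{H}^{-1}$ coincide with those of the generalized problem $\mathcal{K}\mathbf{c}=\theta\mathcal{H}\mathbf{c}$ (take $\mathbf{c}=\mathcal{H}^{-1}\mathbf{u}$). Identify $\mathbf{c}$ with the polynomial $c(v)=\mathbf{c}^T\mathbf{p}(v)$ of degree at most $M-1$. The pair $(\mathcal{H},\mathcal{K})$ consists of the Gram matrices of multiplication by $v$ on the space $\mathcal{P}_{M-1}$ with the inner product $\langle\cdot,\cdot\rangle_{\mu^{\ast}}$. This inner product is positive definite on $\mathcal{P}_{M-1}$ because $\mathcal{H}\succ0$; equivalently, $\mu^{\ast}$ has $N\ge M+1$ support points with positive weights by Theorem~\ref{thm:existence}. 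The generalized eigenvalues are therefore the eigenvalues of the compressed operator $\Pi v\Pi$ on $\mathcal{P}_{M-1}$. In the orthonormal polynomial basis for $\mu^{\ast}$ this operator is the $M\times M$ Jacobi matrix. Its off-diagonal entries $b_k>0$ exist because orthogonal polynomials $\pi_0,\dots,\pi_M$ are well defined when the support has more than $M$ points. By the classical theory, its eigenvalues are the zeros of $\pi_M$, i.e.\ the Gauss nodes $\theta_j$.

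Distinctness follows because a Jacobi matrix with nonzero off-diagonal entries is unreduced, so it has simple eigenvalues. Alternatively, the zeros of $\pi_M$ are simple and real: if $\pi_M$ had fewer than $M$ sign changes in the support interval, let $s$ be the product of the corresponding linear factors. Then $\langle \pi_M s\rangle>0$, contradicting orthogonality.

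The main obstacle I expect is the strict bound~\eqref{eq:speed-bound}. I would use the Rayleigh quotient: $\theta = \langle v c^2\rangle/\langle c^2\rangle$ for the eigenpolynomial $c$. Since $\mu^{\ast}$ is supported on $\{v_1,\dots,v_N\}$ with positive weights, this quotient is a convex combination of the $v_l$ with weights $\propto \Delta v_l f^{\ast}_l c(v_l)^2$. It equals $v_1$ only if $c(v_l)=0$ for all $l\ge2$. That is impossible because $c\neq0$ has degree at most $M-1<N-1$ and so cannot vanish at $N-1\ge M$ nodes. The same argument applied at $v_N$ gives the upper bound.
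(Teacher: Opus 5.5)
Your proof is correct. Its backbone is the paper's: primal stationarity $h_l'(f^{\ast}_l) = -(\mathbf{y}^{\ast})^{T}\mathbf{p}(v_l)$ from the Gibbs form, $\partial\mathbf{y}^{\ast}/\partial\mathbf{m} = -\mathcal{H}^{-1}$, $\mathbf{F}' = \mathcal{K}\mathcal{H}^{-1}$, reduction of the pencil $(\mathcal{K},\mathcal{H})$ to the Jacobi matrix in the orthonormal basis of $\mu^{\ast}$, and simple eigenvalues because that matrix is unreduced. Two steps are argued differently.

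\emph{Gradient of the entropy.} The paper obtains $\nabla\eta = -\mathbf{y}^{\ast}$ by writing $\eta = -\Psi(\mathbf{y}^{\ast})$ via strong duality and applying the envelope theorem. It then checks compatibility by substituting \eqref{eq:df-dm} to reach $-(\mathbf{y}^{\ast})^{T}\mathcal{K}\mathcal{H}^{-1}$. You instead differentiate the constraint $\mathbf{A}\mathbf{f}^{\ast}(\mathbf{m}) = \mathbf{m}$, which is the envelope argument carried out on the primal side. You also read off $\nabla\psi^{T} = \nabla\eta^{T}\mathbf{F}'$ directly from $\mathbf{F} = \sum_l \Delta v_l v_l \mathbf{p}(v_l) f^{\ast}_l$, without needing the formula for $\mathbf{F}'$. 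Your route needs only the differentiability from Theorem~\ref{thm:analyticity}, not Corollary~\ref{cor:strong-duality}. The paper's route makes the dual (Legendre) origin of the entropy variables explicit.

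\emph{Strict speed bound.} For \eqref{eq:speed-bound} the paper cites the classical fact that zeros of orthogonal polynomials lie strictly inside the convex hull of the support. Your Rayleigh quotient $\theta = \langle v c^{2}\rangle/\langle c^{2}\rangle$ proves it directly. It is the positive-definite sharpening of the $\mathcal{K}-v_{1}\mathcal{H}\succeq 0$ argument the paper recalls after the theorem. It also shows exactly where strictness comes from: the $N-1\ge M$ nodes other than $v_1$ carry positive mass. Because it never uses the three-term recurrence, the same argument gives a strict bound for $\mathcal{K}^{(i)}\mathcal{H}^{-1}$ in $d\ge 2$. This holds whenever the columns of $\mathbf{A}$ belonging to nodes off the extreme face $\{v_{l,i} = \min_k v_{k,i}\}$ (respectively the maximal face) still have rank $M$. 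Incidentally, your alternative sign-change argument, run on $[v_1,v_N]$, already places all $M$ zeros of $\pi_M$ in the open interval $(v_1,v_N)$, so it gives the strict bound too.

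One small slip: $\mathcal{H}\succ 0$ is not equivalent to $N\ge M+1$, since it only requires $M$ support points. The extra node supplied by assumption~\ref{assumption:rank} is what makes $\pi_M$ well defined and the bound strict. With exactly $M$ nodes, $\mathcal{K}\mathcal{H}^{-1} = \mathbf{A}\mathcal{V}_x\mathbf{A}^{-1}$ and the speeds would be the nodes themselves. You use $N\ge M+1$ correctly where it matters, so the argument is unaffected.
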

\begin{proof}
    \emph{(i)} By construction $\eta$ is the optimal value of the primal objective in~\eqref{eq:optimization-spec}, so strong duality
    (Corollary~\ref{cor:strong-duality}) gives $\eta(\mathbf{m}) = -\Psi\left(\mathbf{y}^{\ast}(\mathbf{m})\right)$. Since $\mathbf{y}^{\ast}$ is a stationary point
    of $\Psi(\mathbf{y};\mathbf{m})$, the envelope theorem gives $\nabla\eta = -\mathbf{y}^{\ast}$, and Theorem~\ref{thm:analyticity} then yields
    $\nabla^{2}\eta = -\partial\mathbf{y}^{\ast}/\partial\mathbf{m} = \mathcal{H}^{-1}\succ0$, so $\eta$ is strictly convex. Next, from~\eqref{eq:gibbs-form} we have that
    $\partial f^{\ast}_{l}/\partial\mathbf{y} = -f^{\ast}_{l}\mathbf{p}(v_{l})^{T}$, so that in the entropy variables $-\mathbf{y}$ we can write
    \begin{equation}
        \frac{\partial \mathbf{m}}{\partial (-\mathbf{y})} = \sum_{l}\Delta v_{l}\mathbf{p}(v_{l})f^{\ast}_{l}\mathbf{p}(v_{l})^{T} = \mathcal{H},\qquad
        \frac{\partial \mathbf{F}}{\partial (-\mathbf{y})} = \sum_{l}\Delta v_{l}v_{l}\mathbf{p}(v_{l})f^{\ast}_{l}\mathbf{p}(v_{l})^{T} = \mathcal{K}.
    \end{equation}
    Both $\mathcal{H}$ and $\mathcal{K}$ are symmetric, therefore $\mathbf{F}'(\mathbf{m}) = \mathcal{K}\mathcal{H}^{-1}$ by the chain rule and
    $\nabla^{2}\eta\mathbf{F}' = \mathcal{H}^{-1}\mathcal{K}\mathcal{H}^{-1}$ is symmetric with $\nabla^{2}\eta\succ0$: the Friedrichs condition holds, and local
    well-posedness of the Cauchy problem follows in the standard way~\cite{godlewski2013numerical}. It remains to verify entropy compatibility, i.e.
    $\nabla\psi = \mathbf{F}'^{T}\nabla\eta$. Since $h_l'(f) = \log f + 1 + \lambda\kappa_{l}$, taking the logarithm of~\eqref{eq:gibbs-form} gives the primal
    stationarity relation
    \begin{equation}
        h_l'(f^{\ast}_{l}) = \log\tilde{\phi}_{l} + 1 + \lambda\kappa_{l} - \left(\mathbf{y}^{\ast}\right)^{T}\mathbf{p}(v_{l})
        = -\left(\mathbf{y}^{\ast}\right)^{T}\mathbf{p}(v_{l}),\label{eq:primal-stationarity}
    \end{equation}
    the term $1 + \lambda\kappa_{l}$ being cancelled exactly by $\log\tilde{\phi}_{l} = -1-\lambda\kappa_{l}$. Differentiating $\psi$
    in~\eqref{eq:entropy-pair} through the chain rule and inserting first~\eqref{eq:primal-stationarity} and then~\eqref{eq:df-dm} yields
    \begin{equation}
        \frac{\partial\psi}{\partial\mathbf{m}}
        = \sum_{l}\Delta v_{l}v_{l}\,h_l'\left(f^{\ast}_{l}\right)\frac{\partial f^{\ast}_{l}}{\partial\mathbf{m}}
        = -\left(\mathbf{y}^{\ast}\right)^{T}\sum_{l}\Delta v_{l}v_{l}f^{\ast}_{l}\,\mathbf{p}(v_{l})\mathbf{p}(v_{l})^{T}\mathcal{H}^{-1}
        = -\left(\mathbf{y}^{\ast}\right)^{T}\mathcal{K}\mathcal{H}^{-1},
    \end{equation}
    the sum being $\mathcal{K}$ by~\eqref{eq:gram-matrices}. Since $\nabla\eta = -\mathbf{y}^{\ast}$ and $\mathbf{F}' = \mathcal{K}\mathcal{H}^{-1}$, this is
    precisely $\left(\nabla\eta\right)^{T}\mathbf{F}'$, and transposing gives $\nabla\psi = \mathbf{F}'^{T}\nabla\eta$; see
    also~{schaerer2017efficient,schaerer201735,pichard2025entropy}.

    \emph{(ii)} By Theorem~\ref{thm:existence}, $f^{\ast}_{l}>0$ for every $l$, so $\mu^{\ast}$ is a positive measure supported on all $N$ grid
    nodes; by assumption~\ref{assumption:rank}, $N \ge M+1$. Thus orthonormal polynomials $\pi_{0},\pi_{1},\ldots,\pi_{M}$ of $\mu^{\ast}$ exist, with
    $\deg\pi_{k}=k$ and positive leading coefficients $\gamma_{k}$. Let $Q\in\mathbb{R}^{M\times M}$ be the (upper triangular, invertible) change of basis defined by
    $\boldsymbol{\pi} := \left(\pi_{0},\ldots,\pi_{M-1}\right)^{T} = Q^{T}\mathbf{p}$. Orthonormality reads
    \begin{equation}
        Q^{T}\mathcal{H}Q = \left\langle\boldsymbol{\pi}\boldsymbol{\pi}^{T}\right\rangle = \mathbf{I},
    \end{equation}
    or equivalently, $\mathcal{H}^{-1} = QQ^{T}$.
    The three-term recurrence $v\pi_{k} = b_{k+1}\pi_{k+1} + a_{k}\pi_{k} + b_{k}\pi_{k-1}$ with $b_{k} = \gamma_{k-1}/\gamma_{k}>0$, together with
    $\left\langle v\pi_{j}\pi_{k}\right\rangle = 0$ for $|j-k|>1$, gives the Jacobi matrix of $\mu^{\ast}$
    \begin{equation}
        J := Q^{T}\mathcal{K}Q = \left\langle v\boldsymbol{\pi}\boldsymbol{\pi}^{T}\right\rangle,
    \end{equation}
    with
    \begin{equation}
        J_{kk} = a_{k-1},\quad J_{k,k+1} = J_{k+1,k} = b_{k},\quad J_{jk} = 0 \ \text{ for } |j-k|>1.
    \end{equation}
    We therefore have
    \begin{equation}
        \mathbf{F}'(\mathbf{m}) = \mathcal{K}\mathcal{H}^{-1} = \mathcal{K}QQ^{T} = Q^{-T}\left(Q^{T}\mathcal{K}Q\right)Q^{T} = Q^{-T}JQ^{T},
    \end{equation}
    so the flux Jacobian is similar to the Jacobi matrix $J$ and has the same spectrum.

    The matrix $J$ is symmetric tridiagonal and unreduced, since all off-diagonal entries $b_{1},\ldots,b_{M-1}$ are strictly positive. Its eigenvalues are real due to symmetry, and simple because the three-term recurrence determines each eigenvector uniquely up to scale (thus the geometric multiplicity of each eigenvalue is one), and since the matrix is symmetric,
    the geometric and algebraic multiplicities coincide.
    By the Golub--Welsch construction~\cite{golub1969calculation,gautschi2004orthogonal}, the eigenvalues of
    $J$ are precisely the nodes of the $M$-point Gauss quadrature rule for $\mu^{\ast}$, i.e. the zeros of $\pi_{M}$. Zeros of orthogonal polynomials are simple and
    lie strictly inside the convex hull of the support of the measure~\cite{gautschi2004orthogonal}, which here is $\left[v_{1},v_{N}\right]$; this gives~\eqref{eq:speed-bound} and completes the
    proof.
\end{proof}

Part~(i) is~\cite[Prop.~4.2]{pichard2025entropy} under the identification~\eqref{eq:advm-identification}. In case the per-node entropies correspond to a quadrature rule applied to the Boltzmann entropy, the closure coincides with the quadrature-discretised maximum-entropy closure, for which the same symmetric-hyperbolic structure was established in~\cite{schaerer201735,schaerer2017efficient}. The closed bound
$\mathrm{Sp}(\mathbf{F}')\subset[v_{1},v_{N}]$ given in~\cite{pichard2025entropy} is derived by an argument independent of part~(ii): since
$\mathcal{K}-v_{1}\mathcal{H} = \sum_{l}\Delta v_{l}(v_{l}-v_{1})f^{\ast}_{l}\mathbf{p}(v_{l})\mathbf{p}(v_{l})^{T}\succeq 0$ (i.e. is positive semi-definite), the matrix
$\mathbf{F}'-v_{1}\mathbf{I} = \left(\mathcal{K}-v_{1}\mathcal{H}\right)\mathcal{H}^{-1}$ is similar to
$\mathcal{H}^{-1/2}\left(\mathcal{K}-v_{1}\mathcal{H}\right)\mathcal{H}^{-1/2}\succeq 0$ and therefore has non-negative spectrum; a similar arguments applies to
$v_{N}\mathbf{I}-\mathbf{F}'$. Part~(ii) is stronger in two respects: it identifies the characteristic speeds, and it gives \emph{strict} hyperbolicity, the
eigenvalues being pairwise distinct rather than simply real, together with the strict inequalities
in~\eqref{eq:speed-bound}.

It is worth emphasizing that from Theorem~\ref{thm:hyperbolicity-fixed-w} it follows that the bound~\eqref{eq:speed-bound} gives
$\max_{i}\left|\theta_{i}\right| < \max_{l}\left|v_{l}\right|$ \emph{independently of the state}, i.e. it is an a priori CFL bound determined by the velocity grid alone, a property the continuous maximum-entropy closure does not possess.

\begin{theorem}\label{thm:hyperbolicity-varying-w}
    Let the hypotheses of Theorem~\ref{thm:hyperbolicity-fixed-w} hold, except that $\mathbf{w} = \mathbf{w}(\mathbf{m})$ is refitted from the state as in
    Corollary~\ref{cor:weight-analytic}, and let $\mathbf{m}\in\mathcal{R}^{w}_{M}$. Let $\mathcal{G}$ be as in~\eqref{eq:G-def} and denote
    \begin{equation}
        \mathcal{G}_{v} := \sum_{l}\Delta v_{l}v_{l}f^{\ast}_{l}\mathbf{p}(v_{l})\left(\frac{\partial \kappa_{l}}{\partial\mathbf{m}}\right)^{T}.
    \end{equation}
    Then the flux Jacobian is
    \begin{equation}
        \mathbf{F}'(\mathbf{m}) = \mathcal{K}\mathcal{H}^{-1} + \lambda R,\qquad
        R := \mathcal{K}\mathcal{H}^{-1}\mathcal{G} - \mathcal{G}_{v},\label{eq:perturbed-jacobian}
    \end{equation}
    which is in general no longer symmetrized by $\mathcal{H}^{-1}$. Nevertheless the system remains strictly hyperbolic, with characteristic speeds
    $\tilde{\theta}_{1},\ldots,\tilde{\theta}_{M}$ real, pairwise distinct and satisfying
    $\left|\tilde{\theta}_{i}-\theta_{i}\right| \le \lambda\left\|\mathcal{H}^{-1/2}R\mathcal{H}^{1/2}\right\|_{2}$, provided that
    \begin{equation}
        \lambda\left\|\mathcal{H}^{-1/2}R\mathcal{H}^{1/2}\right\|_{2} < \frac{1}{2}\min_{i\ne j}\left|\theta_{i}-\theta_{j}\right|,\label{eq:bauer-fike-condition}
    \end{equation}
    where $\theta_{i}$ are the Gauss nodes of Theorem~\ref{thm:hyperbolicity-fixed-w}. In particular, for $\lambda = 0$ the closure does not depend on $\mathbf{w}$ and
    Theorem~\ref{thm:hyperbolicity-fixed-w} applies verbatim.
\end{theorem}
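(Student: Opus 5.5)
Two steps: first derive the perturbed flux Jacobian from Corollary~\ref{cor:jacobian-varying-w}, then treat $\lambda R$ as a perturbation of the strictly hyperbolic matrix of Theorem~\ref{thm:hyperbolicity-fixed-w} and control its spectrum with a Bauer--Fike argument in the symmetrizing coordinates.

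\emph{Jacobian.} In $d=1$ the flux is $\mathbf{F}(\mathbf{m}) = \sum_{l}\Delta v_{l}v_{l}\mathbf{p}(v_{l})f^{\ast}_{l} = \left\langle v\mathbf{p}\right\rangle$, so I apply~\eqref{eq:moment-map-varying-w} with $\mathbf{r}(v) = v\mathbf{p}(v)$. Using $\left\langle v\mathbf{p}\mathbf{p}^{T}\right\rangle = \mathcal{K}$, this gives
\begin{equation*}
\mathbf{F}' = \mathcal{K}\mathcal{H}^{-1}(\mathbf{I}+\lambda\mathcal{G}) - \lambda\mathcal{G}_{v},
\end{equation*}
which is exactly~\eqref{eq:perturbed-jacobian}. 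Corollary~\ref{cor:weight-analytic} guarantees that the $\kappa_{l}$ are differentiable on $\mathcal{R}^{w}_{M}$, so every term is well defined. To see that $\mathcal{H}^{-1}$ need not symmetrize $\mathbf{F}'$, it suffices to note that $\mathcal{H}^{-1}R$ is generically non-symmetric: $\mathcal{G}$ is a rank-type product of $\mathbf{p}$ with $\partial\kappa/\partial\mathbf{m}$, not a Gram matrix. I would only remark on this rather than prove it.

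\emph{Spectral perturbation.} Conjugate by $\mathcal{H}^{1/2}$:
\begin{equation*}
\mathcal{H}^{-1/2}\mathbf{F}'\mathcal{H}^{1/2} = S + \lambda E,\qquad S := \mathcal{H}^{-1/2}\mathcal{K}\mathcal{H}^{-1/2},\quad E := \mathcal{H}^{-1/2}R\mathcal{H}^{1/2}.
\end{equation*}
$S$ is symmetric, and by Theorem~\ref{thm:hyperbolicity-fixed-w}(ii) its eigenvalues are the distinct Gauss nodes $\theta_i$. Since $S$ is normal, Bauer--Fike gives, for every eigenvalue $\tilde\theta$ of $S+\lambda E$, some $i$ with $|\tilde\theta-\theta_i|\le\lambda\|E\|_2 =: \varepsilon$.

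Write $\delta := \min_{i\ne j}|\theta_i-\theta_j|$. Under~\eqref{eq:bauer-fike-condition}, $\varepsilon<\delta/2$, so the closed discs $D_i$ of radius $\varepsilon$ around the $\theta_i$ are pairwise disjoint.

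\emph{Main obstacle: one real eigenvalue per disc.} Bauer--Fike only places eigenvalues in the union of the discs, not one in each. I would use the homotopy $S+tE$, $t\in[0,\lambda]$. Each intermediate matrix also satisfies the Bauer--Fike bound with radius $t\|E\|_2\le\varepsilon$, so its spectrum stays in $\bigcup_i D_i$. The eigenvalues depend continuously on $t$ and the discs are disjoint, so each $D_i$ contains exactly one eigenvalue (counted with multiplicity) for all $t$, since this holds at $t=0$.

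That eigenvalue, call it $\tilde\theta_i$, is real. The matrix is real, so non-real eigenvalues occur in conjugate pairs. $D_i$ is centred on the real axis and hence symmetric under conjugation, so a non-real eigenvalue in $D_i$ would bring its conjugate into $D_i$ as well, giving two eigenvalues there. This contradicts the count. Hence the $M$ eigenvalues are real, simple, pairwise distinct because the discs are disjoint, and satisfy $|\tilde\theta_i-\theta_i|\le\lambda\|E\|_2$.

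So $\mathbf{F}'$ is diagonalizable with real distinct eigenvalues, and the system is strictly hyperbolic. For $\lambda=0$ every correction vanishes by Corollary~\ref{cor:jacobian-varying-w}, and the statement reduces to Theorem~\ref{thm:hyperbolicity-fixed-w}.
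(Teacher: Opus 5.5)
Your proposal is correct and follows the paper's proof essentially step for step. It obtains the Jacobian by taking $\mathbf{r}(v)=v\,\mathbf{p}(v)$ in Corollary~\ref{cor:jacobian-varying-w}, conjugates by $\mathcal{H}^{1/2}$ to get the symmetric $S$ plus $\lambda\mathcal{H}^{-1/2}R\mathcal{H}^{1/2}$, applies Bauer--Fike with unit condition number along the homotopy, counts one eigenvalue per disc by continuity, and proves realness by conjugate symmetry. The only cosmetic difference is that the paper enlarges the discs to a radius strictly between $\lambda\|\mathcal{H}^{-1/2}R\mathcal{H}^{1/2}\|_2$ and half the minimal node gap, so that no eigenvalue sits on a boundary; your disjoint closed discs at positive mutual distance serve the same purpose.
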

\begin{proof}
    Corollary~\ref{cor:jacobian-varying-w} applies on $\mathcal{R}^{w}_{M}$. Taking $\mathbf{r}(v) = v\,\mathbf{p}(v)$ in~\eqref{eq:moment-map-varying-w}, so that
    $\left\langle\mathbf{r}\mathbf{p}^{T}\right\rangle = \mathcal{K}$ by~\eqref{eq:gram-matrices} and the correction sum equals $\mathcal{G}_{v}$, the
    flux~\eqref{eq:moment-system-1d-closure} has the Jacobian
    \begin{equation}
        \mathbf{F}'(\mathbf{m}) = \mathcal{K}\mathcal{H}^{-1}\left(\mathbf{I}+\lambda\mathcal{G}\right) - \lambda\mathcal{G}_{v}
        = \mathcal{K}\mathcal{H}^{-1} + \lambda\left(\mathcal{K}\mathcal{H}^{-1}\mathcal{G}-\mathcal{G}_{v}\right),
    \end{equation}
    which is~\eqref{eq:perturbed-jacobian}. The product $\mathcal{H}^{-1}\mathbf{F}' = \mathcal{H}^{-1}\mathcal{K}\mathcal{H}^{-1} + \lambda\mathcal{H}^{-1}R$ has a
    symmetric leading term but an in general non-symmetric $O(\lambda)$ correction, so the symmetrizer of Theorem~\ref{thm:hyperbolicity-fixed-w} is no longer applicable, since $h_l(\cdot)$ itself now depends on the state through $\kappa_{l}(\mathbf{m})$, so~\eqref{eq:entropy-pair} no longer defines a function of
    $\mathbf{m}$ whose gradient is $-\mathbf{y}^{\ast}$.

    For the spectrum, let $S := \mathcal{H}^{-1/2}\mathcal{K}\mathcal{H}^{-1/2}$, which is symmetric, and note that
    $\mathcal{H}^{-1/2}\left(\mathcal{K}\mathcal{H}^{-1}\right)\mathcal{H}^{1/2} = S$, so $S$ has the eigenvalues $\theta_{1}<\ldots<\theta_{M}$ of
    Theorem~\ref{thm:hyperbolicity-fixed-w}. Writing $\tilde{R} := \mathcal{H}^{-1/2}R\mathcal{H}^{1/2}$, we have that the flux Jacobian is
    similar to $S+\lambda\tilde{R}$. Consider the parameterization $S_{t} := S + t\lambda\tilde{R}$, $t\in[0,1]$. Since $S$ is symmetric, and therefore normal, the Bauer--Fike theorem~\cite{bauer1960norms} applies with condition number one and gives
    \begin{equation}
        \mathrm{spec}\left(S_{t}\right)\subset\bigcup_{i=1}^{M}\overline{D}\left(\theta_{i},t\delta\right)\subseteq\bigcup_{i=1}^{M}\overline{D}\left(\theta_{i},\delta\right)
        \qquad\text{for all } t\in[0,1],
    \end{equation}
    where $\overline{D}(\theta_{i},\delta)$ is a closed disc with center $\theta_{i}$ and radius $\delta := \lambda\|\tilde{R}\|_{2}$.
    Assume~\eqref{eq:bauer-fike-condition}, i.e. $\delta < \tfrac{1}{2}\min_{i\ne j}|\theta_{i}-\theta_{j}|$, and pick
    $\delta' \in \left(\delta,\tfrac{1}{2}\min_{i\ne j}|\theta_{i}-\theta_{j}|\right)$. The open discs $D(\theta_{i},\delta')$ are then pairwise disjoint, their union contains
    $\mathrm{spec}(S_{t})$ for every $t$, and no eigenvalue of any $S_{t}$ lies on their boundaries. The number of eigenvalues of $S_{t}$ inside each such disc, counted
    with multiplicity, is therefore an integer-valued continuous function of $t$, and therefore constant; at $t=0$ it equals one for each $i$, because the $\theta_{i}$ are
    simple. Consequently $S_{1}$, and with it $\mathbf{F}'(\mathbf{m})$ (due to the similarity), has exactly one simple eigenvalue $\tilde{\theta}_{i}$ in each
    $\overline{D}(\theta_{i},\delta)$, so the $M$ eigenvalues are pairwise distinct and $\left|\tilde{\theta}_{i}-\theta_{i}\right|\le\delta$.

    It remains to show that they are real. The matrix $\mathbf{F}'(\mathbf{m})$ is real, so its spectrum is invariant under complex
    conjugation. If some $\tilde{\theta}_{i}$ were not real, its complex conjugate would be a second eigenvalue of
    $\mathbf{F}'(\mathbf{m})$, distinct from $\tilde{\theta}_{i}$, and it would lie in $\overline{D}(\theta_{i},\delta)$ as well,
    that disc being symmetric about the real axis because its centre $\theta_{i}$ is real. This contradicts the fact that
    $\overline{D}(\theta_{i},\delta)$ contains exactly one eigenvalue of $\mathbf{F}'(\mathbf{m})$. Hence
    $\tilde{\theta}_{i}\in\mathbb{R}$ for every $i$. Having $M$ distinct real eigenvalues, $\mathbf{F}'$ is real diagonalizable
    and the system is strictly hyperbolic.

    Finally, for $\lambda = 0$ the terms $\lambda\mathcal{G}$ and $\lambda\mathcal{G}_{v}$ vanish by Corollary~\ref{cor:jacobian-varying-w}, so that
    $\mathbf{F}' = \mathcal{K}\mathcal{H}^{-1}$ and Theorem~\ref{thm:hyperbolicity-fixed-w} applies unchanged.
\end{proof}

Condition~\eqref{eq:bauer-fike-condition} combined with~\eqref{eq:speed-bound} also yields
the wave-speed bound $\max_{i}|\tilde{\theta}_{i}| < \max_{l}|v_{l}| + \lambda\|\mathcal{H}^{-1/2}R\mathcal{H}^{1/2}\|_{2}$. Thus, the consequence of
Theorems~\ref{thm:hyperbolicity-fixed-w} and~\ref{thm:hyperbolicity-varying-w} is that freezing $\mathbf{w}$, or refitting it once per time step and treating it as
frozen within the step, makes the closed system unconditionally and exactly symmetric hyperbolic for every $\lambda$, whereas a fully state-dependent weighting
introduces an $O(\lambda)$ symmetry defect and makes strict hyperbolicity conditional on~\eqref{eq:bauer-fike-condition}.

\begin{remark}
    The restriction to $d=1$ in Theorems~\ref{thm:hyperbolicity-fixed-w} and~\ref{thm:hyperbolicity-varying-w} concerns strictness only. In $d\ge 2$ the argument for
    part~(i) carries over verbatim, the flux Jacobian in the $i$-th spatial direction being $\mathcal{K}^{(i)}\mathcal{H}^{-1}$ with $\mathcal{K}^{(i)}$
    defined in~\eqref{eq:gram-matrices} and symmetric; the closed system is therefore still symmetric hyperbolic. Strictness, however, fails generically: isotropy of
    the constraint set produces repeated eigenvalues already at a Maxwellian state. The multi-dimensional counterpart of~\eqref{eq:speed-bound}, i.e. the speed bound
    $\max_{k}|\theta^{(i)}_{k}| < \max_{l}|\mathbf{v}_{l,i}|$, remains valid: the positive-semidefiniteness argument recalled after
    Theorem~\ref{thm:hyperbolicity-fixed-w} applies verbatim to $\mathcal{K}^{(i)}$. The perturbed Jacobian of Theorem~\ref{thm:hyperbolicity-varying-w} likewise
    carries over: Corollary~\ref{cor:jacobian-varying-w} with $\mathbf{r} = v_{i}\,\mathbf{p}$ gives
    $\mathcal{K}^{(i)}\mathcal{H}^{-1}\left(\mathbf{I}+\lambda\mathcal{G}\right) - \lambda\mathcal{G}^{(i)}_{v}$ in the $i$-th direction, with
    $\mathcal{G}^{(i)}_{v}$ defined as $\mathcal{G}_{v}$ but with $v_{l,i}$ in place of $v_{l}$; the spectral argument, however, relies on the simplicity of the
    unperturbed eigenvalues and therefore does not extend.
\end{remark}

\begin{table}[htb]
\caption{Structure of the closed one-dimensional moment system~\eqref{eq:moment-system-1d}--\eqref{eq:moment-system-1d-closure} as a function of the weighting
$\mathbf{w}$ and the sparsity parameter $\lambda$.}\label{tab:hyperbolicity-summary}
\begin{tabular}{@{}p{0.13\textwidth}p{0.37\textwidth}p{0.37\textwidth}@{}}
\toprule
 & $\lambda = 0$ & $\lambda > 0$ \tabularnewline
\midrule
\raggedright $\mathbf{w}$ frozen &
\raggedright Symmetric hyperbolic with symmetrizer $\mathcal{H}^{-1}$ and entropy pair $(\eta,\psi)$; strictly hyperbolic, with speeds the Gauss nodes of $\mu^{\ast}$ in
$(v_{1},v_{N})$; valid on all of $\mathcal{R}_{M}$ (Theorem~\ref{thm:hyperbolicity-fixed-w}). &
\raggedright Unchanged. By~\eqref{eq:gibbs-form} the penalty only tilts the prior to $\tilde{\phi}_{l} = e^{-1-\lambda\kappa_{l}}$, so
Theorem~\ref{thm:hyperbolicity-fixed-w} applies verbatim; only the values of $\mathbf{f}^{\ast}$, and thus of the $\theta_{i}$, move. \tabularnewline
\addlinespace
\raggedright $\mathbf{w} = \mathbf{w}(\mathbf{m})$ &
\raggedright Unchanged. By~\eqref{eq:gibbs-form}, $\tilde{\phi}_{l} = e^{-1}$ and $\mathbf{f}^{\ast}$ does not depend on $\mathbf{w}$ at all, so
Theorem~\ref{thm:hyperbolicity-fixed-w} applies verbatim; $\mathbf{w}$ acts only as a preconditioner. &
\raggedright $\mathbf{F}' = \mathcal{K}\mathcal{H}^{-1} + \lambda R$. The symmetrizer $\mathcal{H}^{-1}$ and the entropy structure are lost at $O(\lambda)$; strict
hyperbolicity only under~\eqref{eq:bauer-fike-condition}, and only on $\mathcal{R}^{w}_{M}$ (Theorem~\ref{thm:hyperbolicity-varying-w}). \tabularnewline
\botrule
\end{tabular}
\end{table}

Table~\ref{tab:hyperbolicity-summary} summarizes how the two modelling choices, i.e. whether the weighting $\mathbf{w}$ is frozen or refitted from the state, and
whether the sparsity penalty is active, affect the structure of the full moment system in the one-dimensional case.

\subsection{Numerical schemes for transport}
Next, we analyze some numerical schemes for computing the convective part.
For simplicity, we consider one-dimensional transport in the $x$-direction, but keep the distribution function
multi-dimensional. We also assume a uniform spatial grid with a cell size $\Delta x$.

\subsubsection{First-order schemes}
We first note that both the moments and the flux in a cell are obtained from the same reconstructed $\mathbf{f}$ by applying a fixed linear map. Defining the diagonal matrix
of streaming speeds
\begin{equation}
    \mathcal{V}_{x} := \mathrm{diag}\left(v_{1,x},\ldots,v_{N,x}\right)\in\mathbb{R}^{N\times N},
    \label{eq:streaming-matrix}
\end{equation}
and using that the $x$-flux of the $k$-th constrained moment is
$\left\langle v_{x}p_{k}\right\rangle = \sum_{l}\Delta v_{l}v_{l,x}p_{k}(\mathbf{v}_{l})f^{\ast}_{l}$, we have
\begin{equation}
    \mathbf{m}_{i}^{n} = \mathbf{A}\mathbf{f}^{\ast,n}_{i},\qquad
    \mathbf{F}_{i}^{n} = \mathbf{A}\mathcal{V}_{x}\mathbf{f}^{\ast,n}_{i},\label{eq:moment-flux-linear}
\end{equation}
where $\mathbf{f}^{\ast,n}_{i}$ is the reconstruction in cell $i$ at timestep $n$, $\mathbf{m}^{n}_{i}$ the corresponding vector of moments and
$\mathbf{F}^{n}_{i}$ the corresponding flux. The second identity in~\eqref{eq:moment-flux-linear} reduces in the one-dimensional setting
of~\eqref{eq:moment-system-1d-closure} to $\mathbf{A}\mathcal{V}_{x} = \left(\mathbf{A}_{2},\ldots,\mathbf{A}_{M},\mathbf{A}_{\mathrm{next}}\right)^{T}$
via $vp_{k} = p_{k+1}$.

All the first-order schemes considered here are members of a single family, which we analyze below. We use the conservative update
\begin{equation}
    \mathbf{m}^{n+1}_{i} = \mathbf{m}^{n}_{i}-\frac{\Delta t}{\Delta x}\left(\mathbf{F}^{\mathrm{lo}}_{i+1/2}-\mathbf{F}^{\mathrm{lo}}_{i-1/2}\right),
    \label{eq:conservative-update}
\end{equation}
with $\Delta t$ the timestep, and take the node-wise numerical flux
\begin{equation}
    F^{\mathrm{lo}}_{l,i+1/2} = \frac{v_{l,x}}{2}\left(f^{\ast,n}_{i,l}+f^{\ast,n}_{i+1,l}\right)
    - \frac{a_{l,i+1/2}}{2}\left(f^{\ast,n}_{i+1,l}-f^{\ast,n}_{i,l}\right),\label{eq:lo-flux-kinetic}
\end{equation}
whose moment counterpart, obtained by applying $\mathbf{A}$ and using~\eqref{eq:moment-flux-linear}, is
\begin{equation}
    \begin{aligned}
        \mathbf{F}^{\mathrm{lo}}_{i+1/2} &= \frac{1}{2}\left(\mathbf{F}^{n}_{i}+\mathbf{F}^{n}_{i+1}\right)
        - \frac{1}{2}\mathbf{A}\mathcal{D}_{i+1/2}\left(\mathbf{f}^{\ast,n}_{i+1}-\mathbf{f}^{\ast,n}_{i}\right),\\
        \mathcal{D}_{i+1/2} &:= \mathrm{diag}\left(a_{1,i+1/2},\ldots,a_{N,i+1/2}\right).
    \end{aligned}\label{eq:lo-flux-family}
\end{equation}
This is a central flux plus a numerical viscosity $\mathcal{D}_{i+1/2}$ that acts node-wise on the velocity grid through the non-negative coefficients $a_{l,i+1/2}$, both terms being
exact images under $\mathbf{A}$ of quantities assembled from the reconstructions of the two adjacent cells; the viscosities are allowed to differ from interface to
interface. Writing $\nu_{l} := v_{l,x}\Delta t/\Delta x$ for the node-wise Courant number and
$\xi_{l,i+1/2} := a_{l,i+1/2}\Delta t/\Delta x$ for the scaled viscosity, the node-wise update given by~\eqref{eq:lo-flux-kinetic} reads
\begin{equation}
    \tilde{f}_{i,l} = \frac{\xi_{l,i-1/2}+\nu_{l}}{2}f^{\ast,n}_{i-1,l}
    + \left(1-\frac{\xi_{l,i-1/2}+\xi_{l,i+1/2}}{2}\right)f^{\ast,n}_{i,l}
    + \frac{\xi_{l,i+1/2}-\nu_{l}}{2}f^{\ast,n}_{i+1,l},\label{eq:kinetic-lo-componentwise}
\end{equation}
which is the standard three-point scheme for the scalar advection equation $\partial_{t}f_{l}+v_{l,x}\partial_{x}f_{l} = 0$ for the value of the velocity distribution function at the $l$-th velocity node.

The schemes of interest are recovered by the choices
\begin{equation}
    a_{l,i+1/2} =
    \begin{cases}
        \Delta x/\Delta t, & \text{Lax-Friedrichs},\\[2pt]
        \left|v_{l,x}\right|, & \text{upwind},\\[2pt]
        \max_{k}\left|v_{k,x}\right|, & \text{global Rusanov},
    \end{cases}\label{eq:viscosity-choices}
\end{equation}
which we now discuss in turn.

For the Lax-Friedrichs choice $a_{l,i+1/2} = \Delta x/\Delta t$ the viscosity matrix is a multiple of the identity, so that
$\mathbf{A}\mathcal{D}_{i+1/2}\left(\mathbf{f}^{\ast,n}_{i+1}-\mathbf{f}^{\ast,n}_{i}\right) = \frac{\Delta x}{\Delta t}\left(\mathbf{m}^{n}_{i+1}-\mathbf{m}^{n}_{i}\right)$
and~\eqref{eq:lo-flux-family} becomes the usual flux
$\frac{1}{2}\left(\mathbf{F}^{n}_{i}+\mathbf{F}^{n}_{i+1}\right)-\frac{\Delta x}{2\Delta t}\left(\mathbf{m}^{n}_{i+1}-\mathbf{m}^{n}_{i}\right)$, assembled from
the moment arrays alone. The update~\eqref{eq:conservative-update} then can be re-written in the usual Lax-Friedrichs form
\begin{equation}
    \mathbf{m}_{i}^{n+1} = \frac{1}{2} \left(\mathbf{m}_{i+1}^{n} + \mathbf{m}_{i-1}^{n}\right)
    - \frac{\Delta t}{2\Delta x}\left(\mathbf{F}_{i+1}^{n} - \mathbf{F}_{i-1}^{n}\right),\label{eq:lax-friedrichs}
\end{equation}
with the corresponding node-wise update being
\begin{equation}
    \tilde{f}_{i,l} = \frac{1+\nu_{l}}{2}f^{\ast,n}_{i-1,l} + \frac{1-\nu_{l}}{2}f^{\ast,n}_{i+1,l}.\label{eq:kinetic-lax-friedrichs-componentwise}
\end{equation}

For the upwind choice $a_{l,i+1/2} = \left|v_{l,x}\right|$ the viscosity is no longer proportional to the identity and is velocity grid-dependent. Splitting the
streaming speeds into their positive and negative parts, $v^{\pm}_{l,x} := \tfrac{1}{2}\left(v_{l,x}\pm\left|v_{l,x}\right|\right)$, and writing
$\mathcal{V}^{\pm}_{x} := \mathrm{diag}\left(v^{\pm}_{1,x},\ldots,v^{\pm}_{N,x}\right)$, so that $\mathcal{V}_{x} = \mathcal{V}^{+}_{x}+\mathcal{V}^{-}_{x}$ and
$\mathcal{D}_{i+1/2} = \mathcal{V}^{+}_{x}-\mathcal{V}^{-}_{x}$, the flux~\eqref{eq:lo-flux-kinetic} becomes the first-order kinetic upwind flux
\begin{equation}
    F^{\mathrm{up}}_{l,i+1/2} = v^{+}_{l,x}f^{\ast,n}_{i,l} + v^{-}_{l,x}f^{\ast,n}_{i+1,l},\qquad
    \mathbf{F}^{\mathrm{up}}_{i+1/2} = \mathbf{A}\left(\mathcal{V}^{+}_{x}\mathbf{f}^{\ast,n}_{i} + \mathcal{V}^{-}_{x}\mathbf{f}^{\ast,n}_{i+1}\right),
    \label{eq:upwind-flux}
\end{equation}
the second expression following from~\eqref{eq:moment-flux-linear}. Introducing the pair of \emph{upwind moment measurement matrices}
\begin{equation}
    \mathbf{A}^{\pm} := \mathbf{A}\mathcal{V}^{\pm}_{x}\in\mathbb{R}^{M\times N},\qquad
    \mathbf{A}^{\pm}_{kl} = \Delta v_{l}v^{\pm}_{l,x}p_{k}(\mathbf{v}_{l}),\label{eq:upwind-measurement-matrix}
\end{equation}
which measure the same $M$ monomials as $\mathbf{A}$ in~\eqref{eq:A-def}, weighted by the velocity and restricted to the half of the grid moving in the positive
($+$) and negative ($-$) $x$-direction respectively, and which are fixed matrices by assumptions~\ref{assumption:fixed-matrix}
and~\ref{assumption:fixed-grid}, the interface $x_{i+1/2}$ carries not one flux but two, one per sign,
\begin{equation}
    \mathbf{F}^{+}_{i+1/2} = \mathbf{A}^{+}\mathbf{f}^{\ast,n}_{i},\qquad
    \mathbf{F}^{-}_{i+1/2} = \mathbf{A}^{-}\mathbf{f}^{\ast,n}_{i+1},\qquad
    \mathbf{F}^{\mathrm{up}}_{i+1/2} = \mathbf{F}^{+}_{i+1/2}+\mathbf{F}^{-}_{i+1/2},\label{eq:upwind-partial-fluxes}
\end{equation}
each of which is again an exact moment vector of a single reconstruction, measured with $\mathbf{A}^{\pm}$ instead of with $\mathbf{A}\mathcal{V}_{x}$; since
$\mathcal{V}_{x} = \mathcal{V}^{+}_{x}+\mathcal{V}^{-}_{x}$, the two split the flux map of~\eqref{eq:moment-flux-linear},
$\mathbf{A}\mathcal{V}_{x} = \mathbf{A}^{+}+\mathbf{A}^{-}$. The upwinding thus happens in the measurement matrix, not in the moments:
$\mathbf{F}^{\pm}_{i+1/2}$ depends only on the cell lying upwind of the interface for the corresponding sign, and no averaging or characteristic decomposition of
the moment vector is involved. The node-wise update~\eqref{eq:kinetic-lo-componentwise} correspondingly reads
\begin{equation}
    \tilde{f}_{i,l} = \left(1-\left|\nu_{l}\right|\right)f^{\ast,n}_{i,l} + \nu^{+}_{l}f^{\ast,n}_{i-1,l} - \nu^{-}_{l}f^{\ast,n}_{i+1,l},
    \qquad \nu^{\pm}_{l} := v^{\pm}_{l,x}\frac{\Delta t}{\Delta x},\label{eq:kinetic-upwind-componentwise}
\end{equation}
since $\nu^{+}_{l}-\nu^{-}_{l} = |\nu_{l}|$.

Finally, the global Rusanov choice $a_{l,i+1/2} = \max_{k}\left|v_{k,x}\right|$ is the state-independent bound on the characteristic speeds provided by
Theorem~\ref{thm:hyperbolicity-fixed-w}. Like the Lax-Friedrichs viscosity it is proportional to the identity, so that the correction term
in~\eqref{eq:lo-flux-family} is again $\tfrac{1}{2}\max_{k}\left|v_{k,x}\right|\left(\mathbf{m}^{n}_{i+1}-\mathbf{m}^{n}_{i}\right)$, but it is smaller than the
Lax-Friedrichs one whenever the timestep is below the stability limit.

Next, we discuss the treatment of boundary conditions. Here, we propose a first-order \textit{direct kinetic} treatment: since we already have access to the
underlying distribution $f$ obtained via reconstruction, we can use it to directly compute the boundary fluxes. We follow the first-order finite volume
construction of Baranger et al.~\cite{baranger2019numerical}, transposed to the present moment setting, and use the upwind
flux~\eqref{eq:upwind-flux} at the boundary interface, which as shown below makes the numerical mass flux through a wall vanish exactly. Since the viscosities
in~\eqref{eq:lo-flux-family} may be chosen per interface, this is compatible with any choice in the interior.

For a fully diffuse wall we additionally need the ghost state $\mathbf{f}^{\ast,n}_{0}$ in the cell adjacent to the boundary at $x_{1/2}$, where the normal
$\mathbf{n}$ is taken to point into the gas, so that $\mathbf{v}_{l}\cdot\mathbf{n} = v_{l,x}$. Writing
\begin{equation}
    \mathcal{M}^{w}_{l} := \left(2\pi R T_{w}\right)^{-d/2}
    \exp\left(-\frac{\left|\mathbf{v}_{l}-\mathbf{u}_{w}\right|^{2}}{2RT_{w}}\right)\label{eq:wall-maxwellian}
\end{equation}
for the wall Maxwellian evaluated at the $l$-th node (the scaling of the density is arbitrary, as it cancels out in the subsequent formulae), where $T_{w}$ is the wall temperature, $\mathbf{u}_{w}$ is the wall velocity satisfying
$\mathbf{u}_{w}\cdot\mathbf{n} = 0$, and $R$ the specific gas constant, the ghost state is defined node-wise by
\begin{equation}
    f^{\ast,n}_{0,l} =
    \begin{cases}
        f^{\ast,n}_{1,l}, & v_{l,x} < 0,\\[4pt]
        \sigma_{w}\mathcal{M}^{w}_{l}, & v_{l,x} > 0,
    \end{cases}
    \qquad
    \sigma_{w} = -\frac{\sum_{l:v_{l,x}<0}\Delta v_{l}v_{l,x}f^{\ast,n}_{1,l}}
                       {\sum_{l:v_{l,x}>0}\Delta v_{l}v_{l,x}\mathcal{M}^{w}_{l}}.\label{eq:diffuse-wall}
\end{equation}
Outgoing velocities are thus extrapolated to zeroth order from the adjacent cell, whereas incoming velocities are re-emitted according to the diffuse reflection
law, with $\sigma_{w}$ fixed by the requirement that no mass cross the wall. Both sums in~\eqref{eq:diffuse-wall} are assembled from quantities the solver already
forms, and $\sigma_{w}>0$, its numerator and denominator being respectively negative and positive.

The main property of~\eqref{eq:diffuse-wall} is that the numerical mass flux through the wall vanishes exactly. Indeed, by
assumption~\ref{assumption:density-constraint} the mass flux is the density component of~\eqref{eq:upwind-flux} at $x_{1/2}$, and since $v^{+}_{l,x}$ vanishes for
outgoing nodes,
\begin{equation}
    \sum_{l}\Delta v_{l}F^{\mathrm{up}}_{l,1/2}
    = \sigma_{w}\sum_{l:v_{l,x}>0}\Delta v_{l}v_{l,x}\mathcal{M}^{w}_{l}
    + \sum_{l:v_{l,x}<0}\Delta v_{l}v_{l,x}f^{\ast,n}_{1,l} = 0
\end{equation}
by the definition of $\sigma_{w}$, so that mass is conserved globally for an internal flow. Since $\mathcal{M}^{w}_{l}>0$, $\sigma_{w}>0$ and
$\mathbf{f}^{\ast,n}_{1}>0$, the ghost state is strictly positive, which is required in the realizability argument of
Theorem~\ref{thm:first-order-realizability} below.

\begin{theorem}\label{thm:first-order-realizability}
    If assumptions \ref{assumption:fixed-matrix}, \ref{assumption:rank}, \ref{assumption:density-constraint} and \ref{assumption:fixed-grid} hold, the initial
    conditions are realizable, the boundary conditions are treated via the direct kinetic approach, and a positive weighting function $\mathbf{w}$ is used for the
    reconstruction, then the update~\eqref{eq:conservative-update} with the flux~\eqref{eq:lo-flux-family} leads to a realizable set of moments everywhere in the
    domain, provided the numerical viscosities satisfy
    \begin{equation}
        \left|v_{l,x}\right|\le a_{l,i+1/2}\le\frac{\Delta x}{\Delta t}\qquad\text{for every node } l \text{ and every interface } x_{i+1/2}.
        \label{eq:viscosity-window}
    \end{equation}
    The admissible condition~\eqref{eq:viscosity-window} is fulfilled exactly when the CFL condition $\max_{l}|v_{l,x}|\frac{\Delta t}{\Delta x}\le1$ is satisfied,
    and all three choices~\eqref{eq:viscosity-choices} then lie in it, the Lax-Friedrichs and upwind schemes being its two limiting endpoints. In particular, the
    Lax-Friedrichs, upwind and global Rusanov schemes all preserve realizability under that CFL condition.
\end{theorem}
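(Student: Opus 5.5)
The plan is to lift the moment update to the velocity grid and argue node by node. Since both the moments and the fluxes in each cell are images under the fixed matrix $\mathbf{A}$ of the reconstruction $\mathbf{f}^{\ast,n}_{i}$ (identity~\eqref{eq:moment-flux-linear}), and the numerical flux~\eqref{eq:lo-flux-family} is $\mathbf{A}$ applied to the node-wise flux~\eqref{eq:lo-flux-kinetic}, the conservative update~\eqref{eq:conservative-update} satisfies $\mathbf{m}^{n+1}_{i} = \mathbf{A}\tilde{\mathbf{f}}_{i}$, with $\tilde{\mathbf{f}}_{i}$ given componentwise by~\eqref{eq:kinetic-lo-componentwise}. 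By the definition $\mathcal{R}_{M} = \mathbf{A}(\mathbb{R}^{N}_{>0})$, it then suffices to show that $\tilde{\mathbf{f}}_{i}\in\mathbb{R}^{N}_{>0}$ for every cell $i$. The argument is an induction on $n$. For the step, realizability of $\mathbf{m}^{n}_{i}$ together with Theorem~\ref{thm:existence} (positive $\mathbf{w}$, assumptions~\ref{assumption:fixed-matrix}--\ref{assumption:fixed-grid}) gives a strictly positive reconstruction $\mathbf{f}^{\ast,n}_{i}$ in every interior cell.

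The core step is to verify that~\eqref{eq:kinetic-lo-componentwise} is a combination of $f^{\ast,n}_{i-1,l}, f^{\ast,n}_{i,l}, f^{\ast,n}_{i+1,l}$ with non-negative coefficients, at least one of them positive. Under~\eqref{eq:viscosity-window} this is direct:
\begin{itemize}
\item $\xi_{l,i\pm1/2}\ge|\nu_{l}|$ gives $\xi_{l,i-1/2}+\nu_{l}\ge0$ and $\xi_{l,i+1/2}-\nu_{l}\ge0$;
\item $\xi_{l,i\pm1/2}\le1$ gives the central coefficient $1-(\xi_{l,i-1/2}+\xi_{l,i+1/2})/2\ge0$.
\end{itemize}
The three coefficients sum to $1+(\xi_{l,i-1/2}-\xi_{l,i+1/2})/2$. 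Since each $\xi\le1$, this sum is at least $1/2>0$. So some coefficient is strictly positive, and because all $f^{\ast,n}_{j,l}>0$, we get $\tilde{f}_{i,l}>0$.

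For the first boundary cell, the same computation applies with the ghost state~\eqref{eq:diffuse-wall}, which was shown to be strictly positive, together with the upwind choice at $x_{1/2}$, which lies in the window. Other boundary types would be handled analogously, provided their ghost states are built from positive reconstructions.

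The remaining claims are elementary. The window~\eqref{eq:viscosity-window} is nonempty for every $l$ iff $|v_{l,x}|\le\Delta x/\Delta t$ for all $l$, i.e.\ iff the CFL condition holds. Lax--Friedrichs sits at the upper endpoint and upwind at the lower endpoint. The global Rusanov choice $\max_k|v_{k,x}|$ lies between $|v_{l,x}|$ and $\Delta x/\Delta t$ under CFL.

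The main obstacle I expect is bookkeeping rather than analysis. One point is the exact identity $\mathbf{m}^{n+1}_{i}=\mathbf{A}\tilde{\mathbf{f}}_{i}$, including the boundary interfaces, where the viscosity changes from interface to interface. The other is that $\tilde{\mathbf{f}}_{i}$ is generally not the new reconstruction $\mathbf{f}^{\ast,n+1}_{i}$; the induction only needs $\mathbf{m}^{n+1}_{i}\in\mathcal{R}_{M}$, after which Theorem~\ref{thm:existence} yields a fresh positive reconstruction.
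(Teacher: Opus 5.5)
Your proposal is correct and follows the paper's proof essentially step for step. The shared steps are: induction on $n$, strict positivity of each reconstruction via Theorem~\ref{thm:existence}, the exact identity $\mathbf{m}^{n+1}_{i}=\mathbf{A}\tilde{\mathbf{f}}_{i}$ with node-wise non-negative coefficients under~\eqref{eq:viscosity-window}, a strictly positive ghost state at the wall, and non-emptiness of the viscosity window as the CFL condition.

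There is one small arithmetic slip. The three coefficients in~\eqref{eq:kinetic-lo-componentwise} sum to exactly $1$, because the $\pm\nu_{l}/2$ terms and the $\pm\xi_{l,i\pm1/2}/2$ terms cancel; they do not sum to $1+(\xi_{l,i-1/2}-\xi_{l,i+1/2})/2$. So $\tilde f_{i,l}$ is a genuine convex combination, as in the paper, and your positivity conclusion still holds.
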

\begin{proof}
    We argue by induction on $n$, the induction hypothesis being that $\mathbf{m}^{n}_{i}\in\mathcal{R}_{M}$ for every cell $i$. The base case ($n=0$) is the assumed
    realizability of the initial data.

    We now assume that $\mathbf{m}^{n}_{i}\in\mathcal{R}_{M}$ for every $i$. By Theorem~\ref{thm:existence} each cell then carries a unique reconstruction
    $\mathbf{f}^{\ast,n}_{i}$, and that reconstruction is \emph{strictly positive} in every velocity node. This is the only place where the positivity of
    $\mathbf{w}$ is used, and it is also the reason the statement is insensitive to $\lambda$: by~\eqref{eq:gibbs-form} the penalty and the weighting only tilt the
    prior $\tilde{\phi}_{l} > 0$, so $\mathbf{f}^{\ast,n}_{i} > 0$ for every $\lambda \ge 0$ and every positive $\mathbf{w}$.

    Next, we insert~\eqref{eq:moment-flux-linear} and~\eqref{eq:lo-flux-family} into the update~\eqref{eq:conservative-update}. Since $\mathbf{A}$, $\mathcal{V}_{x}$ and the
    diagonal viscosity matrices $\mathcal{D}_{i\pm1/2}$ are fixed matrices, by assumptions~\ref{assumption:fixed-matrix} and~\ref{assumption:fixed-grid}, and the
    update is affine in $\left(\mathbf{m},\mathbf{F}\right)$, the matrix $\mathbf{A}$ factors out of the entire right-hand side:
    \begin{equation}
        \begin{aligned}
            \mathbf{m}^{n+1}_{i} &= \mathbf{A}\tilde{\mathbf{f}}_{i},\\
            \tilde{\mathbf{f}}_{i} &:= \frac{\Delta t}{2\Delta x}\left(\mathcal{D}_{i-1/2}+\mathcal{V}_{x}\right)\mathbf{f}^{\ast,n}_{i-1}
            + \left(\mathbf{I}-\frac{\Delta t}{2\Delta x}\left(\mathcal{D}_{i-1/2}+\mathcal{D}_{i+1/2}\right)\right)\mathbf{f}^{\ast,n}_{i}\\
            &\qquad + \frac{\Delta t}{2\Delta x}\left(\mathcal{D}_{i+1/2}-\mathcal{V}_{x}\right)\mathbf{f}^{\ast,n}_{i+1}.
        \end{aligned}
        \label{eq:kinetic-lo}
    \end{equation}
    The corresponding update of the $l$-th component of the velocity distribution function is given by~\eqref{eq:kinetic-lo-componentwise}. The moment update is thus the exact image under $\mathbf{A}$ of a node-wise kinetic update; no
    approximation is involved in passing between the two levels, because the moments and the flux are the same fixed linear map applied to the same
    reconstruction.

    Under~\eqref{eq:viscosity-window} the three coefficients in~\eqref{eq:kinetic-lo-componentwise} are non-negative: the outer two because
    $\xi_{l,i\mp1/2}\ge\left|\nu_{l}\right|$, and the middle one because $\xi_{l,i\pm1/2}\le1$. They also sum to one, so $\tilde{f}_{i,l}$ is a convex
    combination of the strictly positive numbers $f^{\ast,n}_{i-1,l}$, $f^{\ast,n}_{i,l}$ and $f^{\ast,n}_{i+1,l}$. Therefore
    $\tilde{\mathbf{f}}_{i}\in\mathbb{R}^{N}_{>0}$, and
    \begin{equation}
        \mathbf{m}^{n+1}_{i} = \mathbf{A}\tilde{\mathbf{f}}_{i} \in \mathbf{A}\left(\mathbb{R}^{N}_{>0}\right) = \mathcal{R}_{M}
    \end{equation}
    by the definition of the realizability cone.


    In cells adjacent to a boundary the stencil in~\eqref{eq:kinetic-lo} uses a ghost state supplied by the direct kinetic treatment. That treatment returns a
    distribution on the same velocity grid, assembled from the reconstructed $\mathbf{f}^{\ast,n}$ of the adjacent cell and, for a diffuse wall, from a wall
    Maxwellian with a non-negative accommodation coefficient; it is therefore itself non-negative, and~\eqref{eq:kinetic-lo-componentwise} again allows writing
    $\mathbf{m}^{n+1}_{i}$ as $\mathbf{A}$ applied to a non-negative vector, i.e. a positive-valued underlying discretized distribution.

    Finally, the interval $\left[\left|v_{l,x}\right|,\Delta x/\Delta t\right]$ is non-empty for every $l$ if and only if
    $\max_{l}\left|v_{l,x}\right|\le\Delta x/\Delta t$, which is the CFL condition; under it the three choices~\eqref{eq:viscosity-choices} evidently
    satisfy~\eqref{eq:viscosity-window}.
\end{proof}

It is worth noting that the proof uses nothing about the family~\eqref{eq:lo-flux-family} beyond the fact that its kinetic
counterpart~\eqref{eq:kinetic-lo-componentwise} is a convex combination of neighbouring states with node-wise non-negative coefficients. Any scheme sharing that
property preserves realizability under the corresponding CFL condition by the same argument, as does any convex combination of such updates, and hence any SSP
Runge--Kutta method built from them.
For an alternative implicit timestepping method that operates directly on the dual variables to avoid inner-loop Newton iterations to determine the moments, the reader is referred to~\cite{schaerer2017efficient}.

\subsubsection{A flux-limited Lax--Wendroff scheme}\label{subsec:lax-wendroff}
The first-order schemes of the previous subsection are robust but strongly diffusive, and we now describe a second-order variant obtained by blending any member
of the family~\eqref{eq:lo-flux-family} with a Lax--Wendroff flux. We keep the conservative form
$\mathbf{m}^{n+1}_{i} = \mathbf{m}^{n}_{i} - \frac{\Delta t}{\Delta x}\left(\mathbf{F}_{i+1/2}-\mathbf{F}_{i-1/2}\right)$ and replace the first-order flux
$\mathbf{F}^{\mathrm{lo}}$ of~\eqref{eq:lo-flux-family}, with viscosities obeying~\eqref{eq:viscosity-window}, by the limited flux
\begin{equation}
    \mathbf{F}_{i+1/2} = \mathbf{F}^{\mathrm{lo}}_{i+1/2}
    + \varphi_{i+1/2}\left(\mathbf{F}^{\mathrm{LW}}_{i+1/2}-\mathbf{F}^{\mathrm{lo}}_{i+1/2}\right).\label{eq:limited-flux}
\end{equation}
Here $\mathbf{F}^{\mathrm{LW}}$ is the higher-order Lax-Wendroff flux, and $\varphi_{i+1/2}\in[0,1]$ is the limiter. The usual approach to computing $\mathbf{F}^{\mathrm{LW}}$ is via the flux Jacobian, which by Theorem~\ref{thm:hyperbolicity-fixed-w} is
$\mathbf{F}' = \mathcal{K}\mathcal{H}^{-1}$; this is in fact can be used here at minimal computational cost, since $\mathcal{H}$ is already formed and factorized by the dual Newton iteration
and $\mathcal{K}$ costs one further accumulation over the velocity grid; this is the approach proposed in~\cite{schaerer2017efficient}. There is, however, an alternative route that requires no linear solve at all. As
in~\eqref{eq:lo-flux-kinetic}, we construct the scheme at the kinetic level, where transport reduces to scalar advection at the constant
speed $v_{l,x}$ carried by the $l$-th node, and then project. The node-wise Lax--Wendroff flux for
$\partial_{t}f_{l}+v_{l,x}\partial_{x}f_{l}=0$ is
\begin{equation}
    F^{\mathrm{LW}}_{l,i+1/2} = \frac{v_{l,x}}{2}\left(f_{l,i}+f_{l,i+1}\right)
    - \frac{v_{l,x}^{2}\Delta t}{2\Delta x}\left(f_{l,i+1}-f_{l,i}\right).\label{eq:kinetic-lax-wendroff}
\end{equation}
Applying $\mathbf{A}$ to~\eqref{eq:kinetic-lax-wendroff} and introducing, in the notation of~\eqref{eq:moment-flux-linear}, the two shifted moment vectors
\begin{equation}
    \mathbf{u}^{(1)}_{i} := \mathbf{A}\mathcal{V}_{x}\mathbf{f}^{\ast}_{i} = \mathbf{F}_{i},\qquad
    \mathbf{u}^{(2)}_{i} := \mathbf{A}\mathcal{V}_{x}^{2}\mathbf{f}^{\ast}_{i},\label{eq:shifted-moments}
\end{equation}
whose $k$-th components are the moments with exponent vectors $\boldsymbol{\alpha}_{k}+\boldsymbol{\epsilon}_{1}$ and
$\boldsymbol{\alpha}_{k}+2\boldsymbol{\epsilon}_{1}$ respectively, gives
\begin{equation}
    \mathbf{F}^{\mathrm{LW}}_{i+1/2} = \frac{1}{2}\left(\mathbf{u}^{(1)}_{i}+\mathbf{u}^{(1)}_{i+1}\right)
    - \frac{\Delta t}{2\Delta x}\left(\mathbf{u}^{(2)}_{i+1}-\mathbf{u}^{(2)}_{i}\right).\label{eq:lax-wendroff-moments}
\end{equation}
The mechanism is that the flux Jacobian of the kinetic system is the diagonal matrix $\mathcal{V}_{x}$ of~\eqref{eq:streaming-matrix}, so its square is again
diagonal, and under $\mathbf{A}$ the squared Jacobian becomes a shift of the monomial exponent by two in the $x$-direction. The quantity $\mathbf{u}^{(1)}$ is the
flux array already required by~\eqref{eq:lo-flux-family}, and $\mathbf{u}^{(2)}$ is one additional higher-order moment computation using the reconstruction
$\mathbf{f}^{\ast}_{i}$; it involves moments one order beyond the closing moments $\mathbf{m}_{\mathrm{next}}$, but these are obtained in exactly the same way via
summation over the velocity grid. As with~\eqref{eq:kinetic-lo}, the moment scheme is the exact image
under $\mathbf{A}$ of a node-wise kinetic scheme, so that the moment and discrete-velocity solvers remain directly comparable.

Since $\mathbf{u}^{(1)}_{i} = \mathbf{F}_{i}$, the leading terms of~\eqref{eq:lo-flux-family} and~\eqref{eq:lax-wendroff-moments} coincide for every choice of
the viscosities, and the antidiffusive flux reduces to the difference of the two numerical viscosities,
\begin{equation}
    \mathbf{F}^{\mathrm{LW}}_{i+1/2}-\mathbf{F}^{\mathrm{lo}}_{i+1/2}
    = \frac{1}{2}\mathbf{A}\mathcal{D}_{i+1/2}\left(\mathbf{f}^{\ast}_{i+1}-\mathbf{f}^{\ast}_{i}\right)
    - \frac{\Delta t}{2\Delta x}\left(\mathbf{u}^{(2)}_{i+1}-\mathbf{u}^{(2)}_{i}\right),\label{eq:antidiffusive-flux}
\end{equation}
which is the standard flux-corrected-transport structure. For the Lax-Friedrichs viscosity the first term is $\frac{\Delta x}{2\Delta t}\left(\mathbf{m}_{i+1}-\mathbf{m}_{i}\right)$
and~\eqref{eq:antidiffusive-flux} is assembled from the moment arrays alone; for the upwind viscosity it is
$\frac{1}{2}\left(\mathbf{A}^{+}-\mathbf{A}^{-}\right)\left(\mathbf{f}^{\ast}_{i+1}-\mathbf{f}^{\ast}_{i}\right)$, and the whole correction collapses node-wise to
\begin{equation}
    F^{\mathrm{LW}}_{l,i+1/2}-F^{\mathrm{up}}_{l,i+1/2}
    = \frac{\left|v_{l,x}\right|}{2}\left(1-\left|\nu_{l}\right|\right)\left(f^{\ast}_{l,i+1}-f^{\ast}_{l,i}\right),\label{eq:antidiffusive-upwind}
\end{equation}
the classical high-resolution correction, which vanishes at $\left|\nu_{l}\right| = 1$; the upwind base scheme is thus the less diffusive starting point, at the
price of a viscosity that is applied on the velocity grid rather than to the moment vector.

The realizability argument of Theorem~\ref{thm:first-order-realizability} does not extend to $\varphi_{i+1/2}>0$: the kinetic Lax--Wendroff update carries the
node-wise coefficients $\tfrac{1}{2}\nu_{l}(\nu_{l}+1)$, $1-\nu_{l}^{2}$ and $\tfrac{1}{2}\nu_{l}(\nu_{l}-1)$, the last of which is negative for
$0<\nu_{l}<1$, so the update is no longer a convex combination of neighbouring states and positivity of the underlying distribution is lost.

We therefore apply flux limiting by using a single scalar $\varphi_{i+1/2}$, shared by all moment components, computed from a small number of
physically meaningful scalar indicators, namely the density $m_{0,\ldots,0}$ and the second-order moments in each velocity direction; for each indicator a smoothness ratio of consecutive
increments is formed, a standard limiter function is evaluated, and $\varphi_{i+1/2}$ is taken as the minimum over the indicators. In the present work,
the van Leer limiter is used. Limiting each moment component
independently would produce a correction vector of no controlled direction relative to $\mathcal{R}_{M}$, whereas a shared scalar confines the updated state to
the segment
\begin{equation}
    \mathbf{m}^{\mathrm{lo}}_{i} + \varphi\left(\mathbf{m}^{\mathrm{LW}}_{i}-\mathbf{m}^{\mathrm{lo}}_{i}\right),\qquad \varphi\in[0,1],
\end{equation}
joining the first-order update, which lies in $\mathcal{R}_{M}$ by Theorem~\ref{thm:first-order-realizability}, to the unlimited second-order update. As
$\mathcal{R}_{M}$ is an open convex cone containing the first endpoint, the set of admissible $\varphi$ is an interval containing $0$, so that $\varphi_{i+1/2}$
may always be reduced until the updated moments are realizable. This provides a one-dimensional backtracking safeguard, with the first-order scheme recovered in
the limit $\varphi_{i+1/2}\to0$.
An alternative approach, described in~\cite{alldredge2012high}, applies limiting on the kinetic, i.e. discrete velocity, level, and therefore requires knowledge of the values $f_l$ of the velocity distribution function in several cells in order to compute the slopes, whereas the limiter formulation presented above operates only on the moment values, but does not provide any a priori feasibility guarantee.
We also refer the reader to~\cite{johnson2023positivity} for a further discussion of limiting approaches in the context of moment methods.

 \section{Overview of numerical algorithm for one-dimensional flows}\label{sec:algorithm}
We provide an outline of the full numerical algorithm for simulation of one-dimensional flows.
We model collisional effects using the BGK collision model~\cite{bhatnagar1954model} with a discretized Maxwellian that \emph{exactly} reproduces the given density, momentum, and energy, found via a Newton iteration (this being the same Maxwellian used for the weighting $\mathbf{w}$), i.e.
the same target distribution proposed in~\cite{mieussens2000discrete}, formulated there as a constrained entropy minimizer. Operator splitting is used to separate the collision and convection parts, and the BGK collisional term
is integrated exactly, assuming a constant collision frequency $\nu$ and target distribution over the time step $\Delta t$, i.e. for the change in the moments $\mathbf{m}$ due to collision is written as
\begin{equation}
    \delta \mathbf{m} = e^{-\nu \Delta t}\mathbf{m} + (1-e^{-\nu \Delta t})\mathbf{m}^{\mathrm{eq}},\label{eq:bgk-update}
\end{equation}
where $\mathbf{m}^{\mathrm{eq}}$ is given by $\mathbf{A}\mathbf{w}$, where $\mathbf{w}$ is the discretized Maxwellian as given by~\eqref{eq:maxwellian-weight}. The collision frequency $\nu$ is computed on the basis of the variable hard sphere model~\cite{DSMC_Bird}. Assuming scaled units, the expression reads
\begin{equation}
    \nu(n, T) = \frac{n}{\mu_{\mathrm{ref}} T^{(\omega_{\mathrm{VHS}} - 1)}},
\end{equation}
where $n$ is the number density, $\omega_{\mathrm{VHS}}$ is the VHS exponent, and $\mu_{\mathrm{ref}}$ is a reference viscosity at a reference temperature $T_{\mathrm{ref}}=1$. 

\begin{algorithm}[h]
\caption{Computation of one-dimensional flows with the sparse entropic quadrature method}
\label{alg:sparse_entropic_flow}

 \hspace*{\algorithmicindent} \textbf{Input:} $f$, maximum order $M_{\max}$ of moments to preserve, regularization $\lambda$,  velocity nodes $\mathbf{v}_{l}$, quadrature weights $\Delta v_{l}$, numerical viscosities $a_l$ (see~\eqref{eq:viscosity-choices}),
 left and right wall temperatures $T_{w,L}$, $T_{w,R}$ and velocities $\mathbf{u}_{w,L}$, $\mathbf{u}_{w,R}$,
 initial number densities, velocities, and temperatures $n^{0}_i$, $\mathbf{u}^{0}_i$, $T^{0}_i$ ($i$ is the index of the grid cell), reference viscosity $\mu_{\mathrm{ref}}$, viscosity exponent $\omega_{\mathrm{VHS}}$,  domain length $L$, number of grid cells $N_x$, timestep $\Delta t$,
 number of timesteps $n_t$  \\
 \hspace*{\algorithmicindent} \textbf{Output:} Macroscopic flow parameters after $n_t$ timesteps
\begin{algorithmic}[1]


\Statex \textbf{Phase 1: Initialization}
\State Compute wall distribution Maxwellians based on $T_{w,L}$, $T_{w,R}$, $\mathbf{u}_{w,L}$, $\mathbf{u}_{w,R}$ via \eqref{eq:wall-maxwellian}
\State Compute moment measurement matrix $\mathbf{A}$ via \eqref{eq:A-def} and $\mathcal{V}_{x}$ via \eqref{eq:streaming-matrix}
\For{$i=1$ to $n_x$}
\State Compute initial distribution in cell $i$ as Maxwellian based on $n^{0}_i$, $\mathbf{u}^{0}_i$, $T^{0}_i$
\State Compute moments $\mathbf{m}_i^{0}$ in cell $i$ using \eqref{eq:moment-compute-linear-def}
\EndFor

\State \textbf{Phase 2: Solution}
\For{$n=1$ to $n_{t}$}
    \For{$i=1$ to $n_x$}
        \State Evaluate local equilibrium distribution $\mathbf{w}$ in cell $i$ according to \eqref{eq:maxwellian-weight} so that density, momentum, and energy constraints are satisfied exactly, as given by \eqref{eq:maxwellian-exact-constraints}
        \State Evaluate equilibrium of $\mathbf{w}$ and apply BGK update to $\mathbf{m}^{n}_i$ according to \eqref{eq:bgk-update}
        \State Reconstruct $\mathbf{f}^{\ast}$ in cell $i$ by solving \eqref{eq:optimization-spec}
        \If{$i==1$ or $i==n_x$}
            \State Compute ghost states according to~\eqref{eq:diffuse-wall}
            \State Compute flux $\mathbf{F}^n_{0}$ and $\mathbf{F}^n_{n_x+1}$ according to according to \eqref{eq:moment-flux-linear} using the ghost state distribution
        \EndIf
        \State Compute flux $\mathbf{F}^n_{i}$ according to \eqref{eq:moment-flux-linear} using the reconstructed distribution $\mathbf{f}^{\ast}$
    \EndFor
    \For{$i=1$ to $n_x$}
        \State Perform Lax-Friedrichs timestep update $\mathbf{m}_{i}^{n+1} = \frac{1}{2} \left(\mathbf{m}_{i+1}^{n} + \mathbf{m}_{i-1}^{n}\right)
    - \frac{\Delta t}{2\Delta x}\left(\mathbf{F}_{i+1}^{n} - \mathbf{F}_{i-1}^{n}\right)$ 
    \EndFor
\EndFor
\end{algorithmic}
\end{algorithm}

An outline of the full set of steps to simulate a flow using the sparse entropic method is given in Algorithm~\ref{alg:sparse_entropic_flow}
for the first-order Lax-Friedrichs scheme. The upwinding scheme is similar, and only requires the 
additional pre-computation of the upwind moment measurement matrices $\mathbf{A}^{\pm}$ according to~\eqref{eq:upwind-measurement-matrix}
and adjustment in as to how the fluxes are evaluated, i.e. using instead~\eqref{eq:upwind-partial-fluxes} of~\eqref{eq:lax-friedrichs}.

\section{Numerical results}\label{sec:numerical results}
In this section, we present numerical results for the two test cases: a Sod shock tube and a Couette flow.
We use a discrete velocity code with an upwinding flux as a baseline numerical scheme against which we compare the moment method approach.
In the present work, a hard sphere model was used, i.e. $\omega_{\mathrm{VHS}} = 0.5$.

In all the results presented below, we consider a full three-dimensional distribution function on a uniform velocity grid and conservation of all moments up to total order $M_{\mathrm{max}}$, i.e. all moments given by the exponent vectors $\boldsymbol{\alpha}_{k} = (\alpha_{k,x},\alpha_{k,y},\alpha_{k,z})$ such that $\alpha_{k,x} + \alpha_{k,y} + \alpha_{k,z} \leq M_{\mathrm{max}}$. The question of conservation of different orders of moments depending on the dominant flow direction is thus left for future work. We take $M_{\mathrm{max}} \in \{2,4,6\}$, corresponding to conservation of 10, 35, and 84 moments, respectively.
We also scale $\lambda$ by $\Delta v$, where $\Delta v$ is the uniform velocity grid spacing; this is done to ensure that the sparsity-promoting regularization strength is independent of the velocity grid spacing. All the results presented below provide the unscaled value of $\lambda$.

The simulations setups used to produce the results are publicly available on GitHub~\cite{oblapenko2026specqkrepro} and are based on version 0.2.0 of the publicly available SPEcQK.jl code~\cite{oblapenko2026specqk} developed by the first author.
The solution data has also been made available on Zenodo~\cite{oblapenko2026specqkdata}.

\subsection{Sod shock tube}
First, we consider the Sod shock tube problem. We solve the equations on a one-dimensional domain of length 1, with the left state given having number density and temperature $(n_L, T_L)=(1,0,1)$ and the right state having number density and temperature $(n_R, T_R)=(\frac{1}{8}, \frac{4}{5})$, corresponding to a 10:1 pressure ratio and 8:1 density ratio. Both states have their velocity set to 0.
We use a velocity grid with extent $[-5,5]$ and 22 nodes in each velocity direction.
We consider the following cases: 1) a case wih $\mu_{\mathrm{ref}} = 2 \times 10^{-2}$, corresponding to a Knudsen number of the left state equal to approximately $2.5 \times 10^{-2}$; 2) a collisionless case without the BGK term.
We solve the equations on a 400-cell grid using a timestep of $4 \times 10^{-4}$, corresponding to a CFL number of 0.8 as governed by the largest grid velocity.

\begin{figure}[t!]
  \centering
  \includegraphics[width=0.88\textwidth]{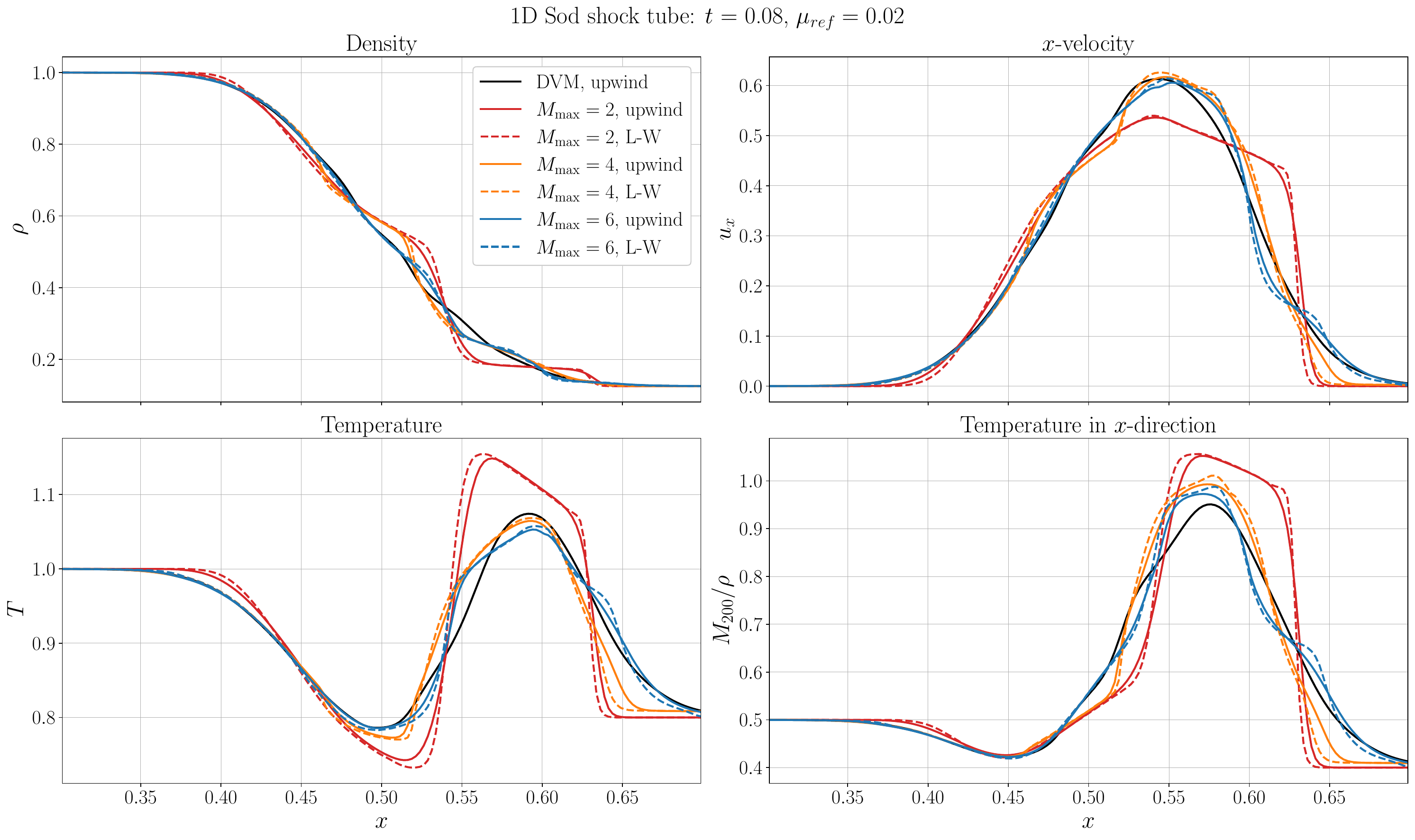}
  \caption{Sod problem profiles for density (upper left), $y$-velocity (upper right), temperature (lower left), and directional temperature (lower right) computed with the DVM method and the sparse entropic quadrature method with $M_{\mathrm{max}}=2$, $M_{\mathrm{max}}=4$, $M_{\mathrm{max}}=6$  and $\lambda=0$; collisional case.}\label{fig:sod002}
  \end{figure}

  Figure~\ref{fig:sod002} shows the solution of the Sod shock tube problem at $t=0.08$ for the case of $\mu_{\mathrm{ref}} = 2 \times 10^{-2}$ and no sparsity-promoting regularization. The DVM solution is computed using the first-order upwinding scheme.
  The moment-based solutions are computed using the first-order upwinding scheme (solid lines) and the flux-limited second-order Lax--Wendroff scheme (dashed lines).
  We observe that the 10-moment solution($M_{\mathrm{max}=2}$) deviates strongly from the reference DVM solution, especially in the temperature profile. Since the 10-moment solution does not explicitly model the heat flux, which is computed only on the basis of the maximum entropy closure, this leads to a significant decrease in the heat transport, thus causing the higher temperatures. The higher-order moment-based solutions, i.e. the the 35-moment solutions ($M_{\mathrm{max}}=4$) and 84-moment solutions ($M_{\mathrm{max}}=6$) exhibit significantly better agreement with the reference DVM solution, with the 84-moment solution showing much better agreement in the low-density region at the right of the plotted domain. Regarding the different numerical advection schemes, we observe that the use of the Lax--Wendroff scheme leads to slightly sharper profiles, however, due to the relatively high grid resolution, even the first-order upwinding scheme already
  produces quite sharp profiles.

\begin{figure}[t!]
  \centering
  \includegraphics[width=0.88\textwidth]{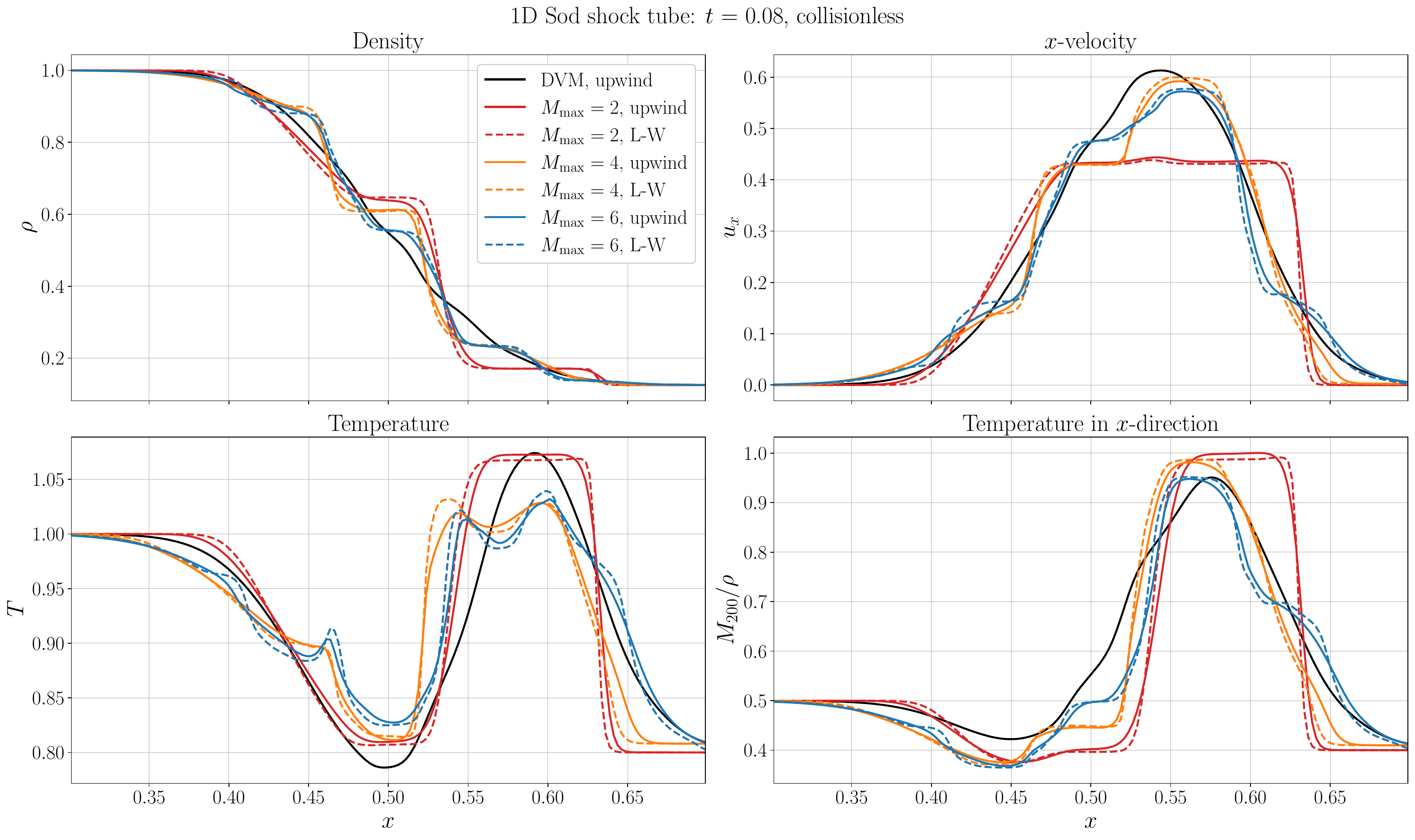}
  \caption{Sod problem profiles for density (upper left), $y$-velocity (upper right), temperature (lower left), and normal stress (lower right) computed with the DVM method and the sparse entropic quadrature method with $M_{\mathrm{max}}=2$, $M_{\mathrm{max}}=4$ and $\lambda=0$; collisionless case.}\label{fig:sodnocoll}
  \end{figure}

Figure~\ref{fig:sodnocoll} shows the profiles for the collisionless case. All the moment solutions deviate stronger from the reference solution than in the collisional case, as the reference solution is strongly discontinuous due to lack of any collisional term that would force it to relax towards a Maxwellian, and cannot be accurately represented by the exponential terms resulting from the maximum-entropy closure~\cite{boccelli2024gallery}. However, increasing the number of moments still allows for more accurate representation of the density, velocity, and temperature profiles, especially towards the edges of the domain. Again, the lack of an explicitly modelled heat flux term in the moment equations for $M_{\mathrm{max}}=2$ leads to a poor representation of the temperature profile, and higher-order moment equations produce results that are significantly closer between themselves. As in the collisional case, the Lax--Wendroff scheme produces slightly sharper profiles of all the macroscopic quantities considered.

\begin{figure}[t!]
  \centering
  \includegraphics[width=0.88\textwidth]{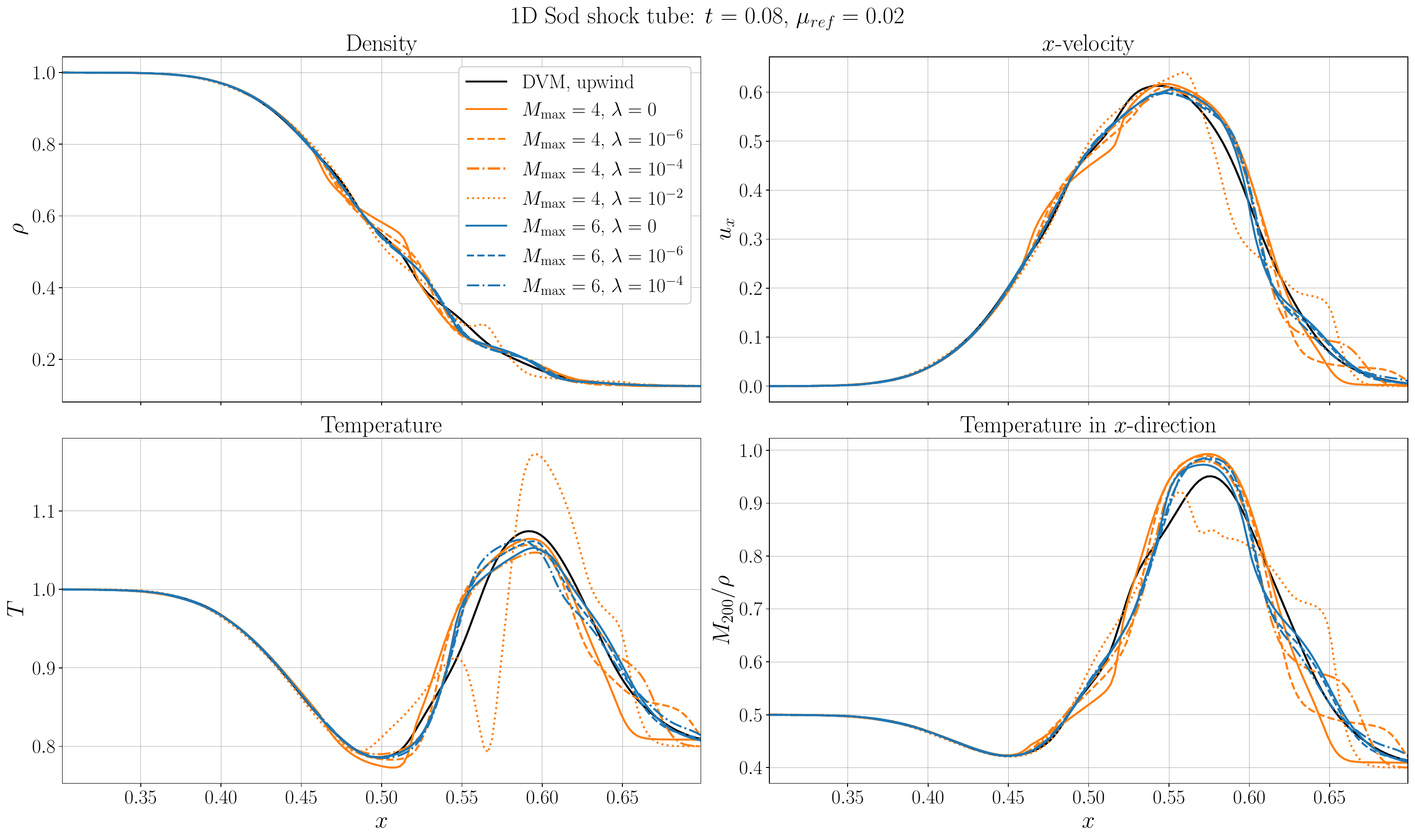}
  \caption{Sod problem profiles for density (upper left), $y$-velocity (upper right), temperature (lower left), and directional temperature (lower right) computed with the DVM method and the sparse entropic quadrature method with $M_{\mathrm{max}}=4$ and $M_{\mathrm{max}}=6$ for different values of $\lambda$; collisional case.}\label{fig:sod002-sparsity}
  \end{figure}

Next, we study the impact of the sparsity-promoting regularization on the solution by varying the $\lambda$ parameter. We omit the $M_{\mathrm{max}}=2$ case due to its general poor quality of approximation, and consider only the upwinding scheme due to the relatively small differences observed between it and the Lax--Wendroff method. Figure~\ref{fig:sod002-sparsity} shows the computed solutions for various values of $\lambda$. It can immediately be observed that for the highest value of $\lambda=10^{-2}$, the $M_{\mathrm{max}}=4$ solution deviates very strongly from both the reference and the other moment-based solutions, and exhibits unphysical minima and maxima. For the same value of $\lambda$ and $M_{\mathrm{max}}=6$, the solver did not converge, and thus no solution is shown. However, for smaller values of $\lambda$, the solutions are in good agreement with the reference solution, especially for the $M_{\mathrm{max}}=6$ case, whereas the $M_{\mathrm{max}}=4$ case exhibits some discrepancies in the right part of the domain, especially in the temperature profiles.

\begin{figure}[t!]
  \centering
  \includegraphics[width=0.88\textwidth]{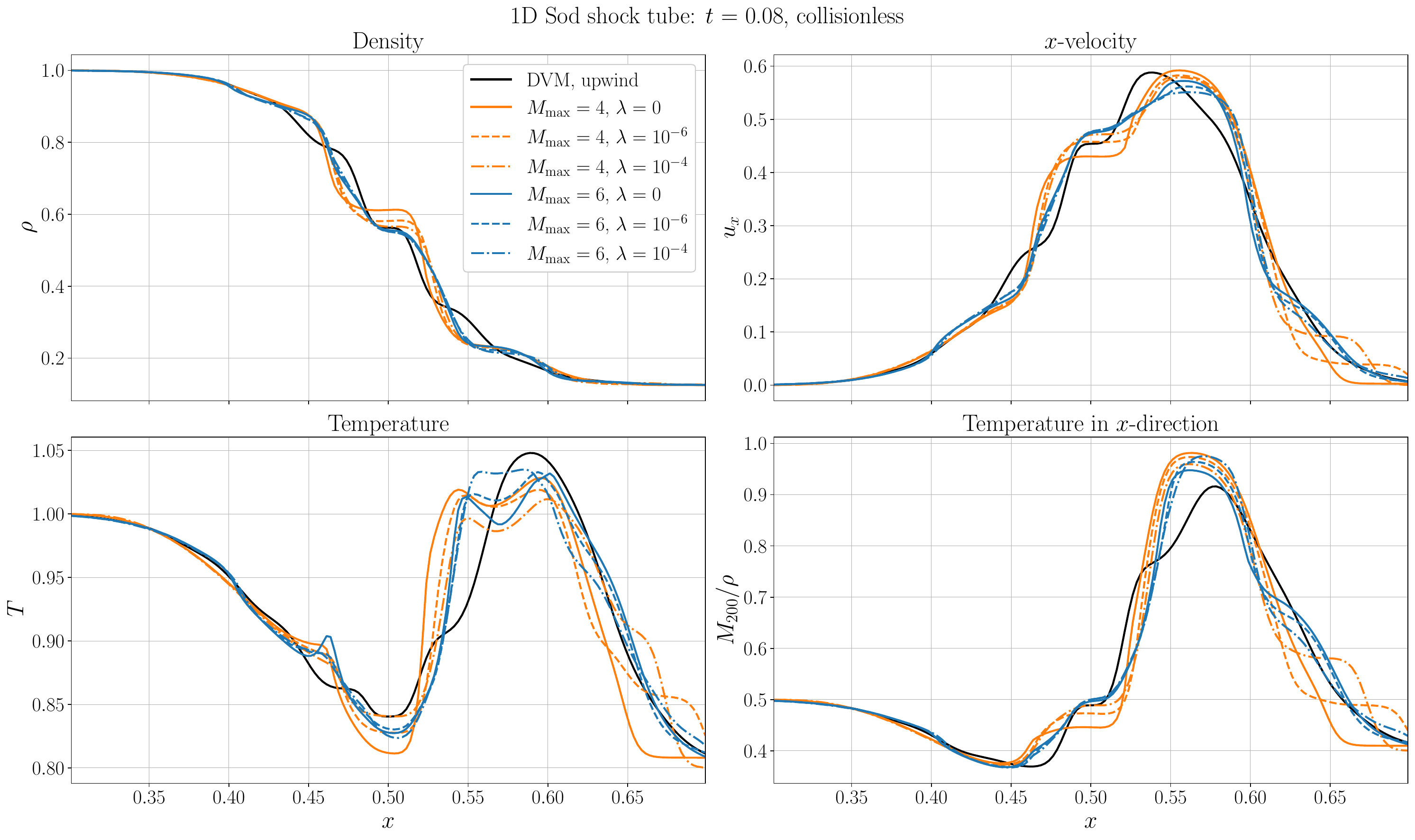}
  \caption{Sod problem profiles for density (upper left), $y$-velocity (upper right), temperature (lower left), and directional temperature (lower right) computed with the DVM method and the sparse entropic quadrature method with $M_{\mathrm{max}}=4$ and $M_{\mathrm{max}}=6$ for different values of $\lambda$; collisionless case.}\label{fig:sodnocoll-sparsity}
  \end{figure}

Similar behaviour is observed for the converged solutions in the collisionless case, as shown on Figure~\ref{fig:sodnocoll-sparsity}. However,
for the highest value of the regularization parameter ($\lambda=10^{-2}$), the solver fails to converge both for $M_{\mathrm{max}}=4$ and $M_{\mathrm{max}}=6$. For lower values of the regularization strength, the solutions remain quite close to the non-sparse ones, although the deviations are larger than in the collisional case.

Finally, we discuss how the different values of $\lambda$ affect the sparsity of the computed underlying distribution. As the latter is non-negative by construction (see Eqn.~\ref{eq:gibbs-form}), sparsity is not exact. Therefore, we use a threshold of $10^{-10}$ for the values of the function $g$ to determine whether a value is zero, which corresponds to the reconstructed distribution have a value 10 orders of magnitude smaller than that of the local Maxwell--Boltzmann distribution.
For $\lambda=0$, all moment method-based solutions have a sparsity of 0\%; for $\lambda=10^{-6}$, the sparsity is approximately 83\% across all moment solutions, for $\lambda=10^{-4}$ it is roughly 90\%, and for $\lambda=10^{-2}$ it is 95\%. We therefore conclude that even with a high degree of sparsity (up to 90\%), the solutions produced by the moment method with the sparse entropic closure are still accurate and retain the main features of the non-sparse solutions; for higher degrees of sparsity, the solution quality degrades and the solver becomes unstable. Finally, we note that for a fixed value of $\lambda$, the resulting sparsity is virtually unaffected by the number of moment constraints used.

\subsection{Couette flow}
Next, we consider a Couette flow between in a channel of length 1 between two parallel plates with a wall temperature $T_w = 1$ and moving in the $y$-direction with a velocity of $\mp 1$ (left and right wall, respectively). We take the reference viscosity as $3 \times 10^{-1}$, corresponding to a Knudsen number of approximately $4\times 10^{-1}$, placing the flow firmly in the transitional regime.
We use a velocity grid with extent $[-6,6]$ and 16 nodes in each velocity direction.

First, we briefly analyze the reference discrete velocity results before comparing them with the results computed with the sparse entropic quadrature method.

\begin{figure}[t!]
  \centering
  \includegraphics[width=0.88\textwidth]{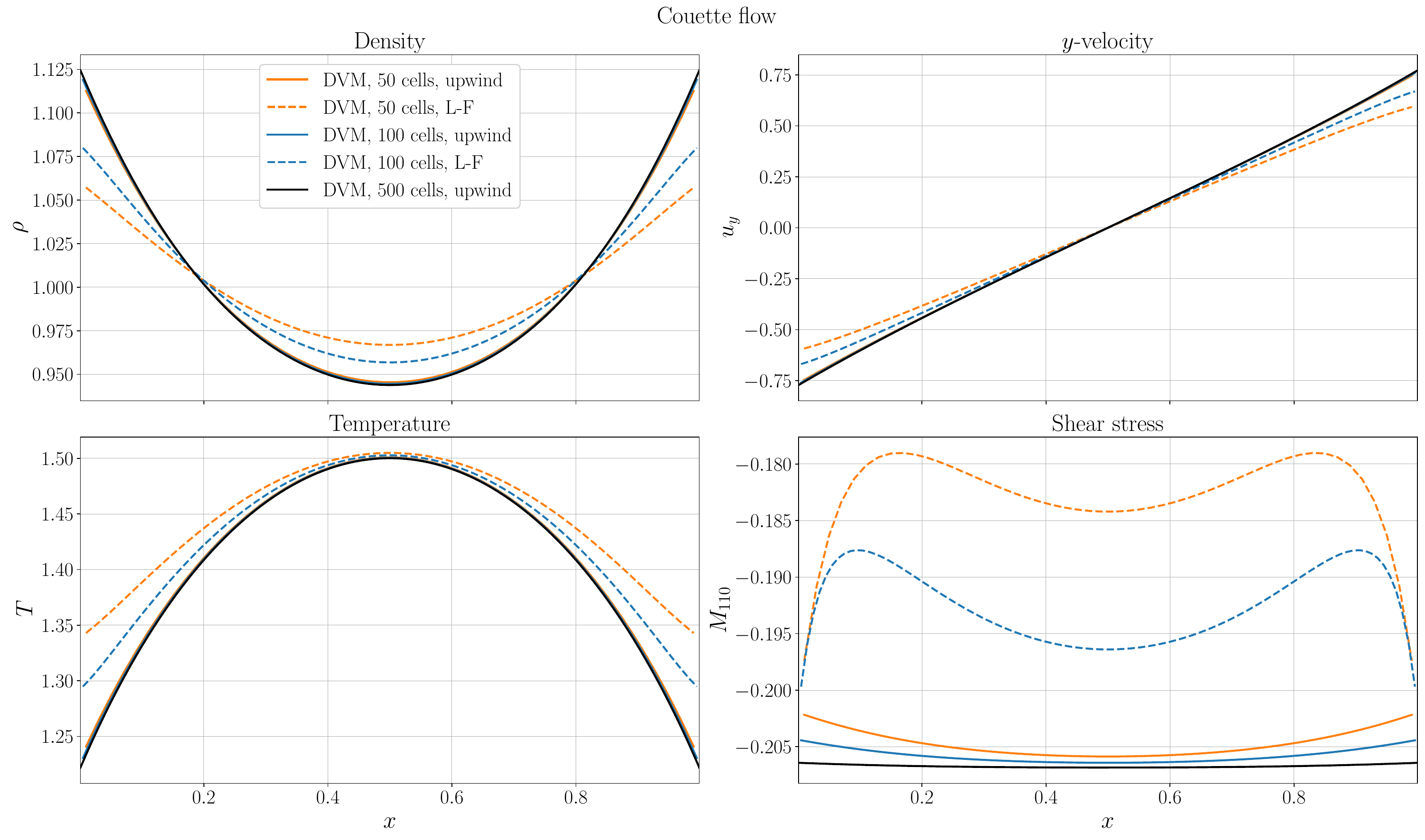}
  \caption{Couette flow profiles for density (upper left), $y$-velocity (upper right), temperature (lower left), and shear stress (lower right) computed with the DVM method.}\label{fig:couette-dvm}
  \end{figure}

Figure~\ref{fig:couette-dvm} shows the density, $y$-velocity, $T$, and shear stress profiles computed using the discrete velocity method on grids of 50, 100, and 500 cells, the latter computed only with the upwind method, and the others computed using either the upwind flux (``upwind'') or the Lax--Friedrichs flux (``L-F'').
We observe that all the upwinding results lie much closer to one another, with very slight deviations observable in the shear stress profile. The significantly higher numerical viscosity of the Lax--Friedrichs method leads to stronger variation in the solutions computed on different grids, and also leads to qualitative differences in the shear stress profile.
As such, we retain the upwinding DVM result on a 100-cell grid as the reference solution, but also retain the Lax--Friedrichs result on a 100-cell grid for a consistent comparison with results of the moment model computed using the Lax--Friedrichs method.

\begin{figure}[t!]
  \centering
  \includegraphics[width=0.88\textwidth]{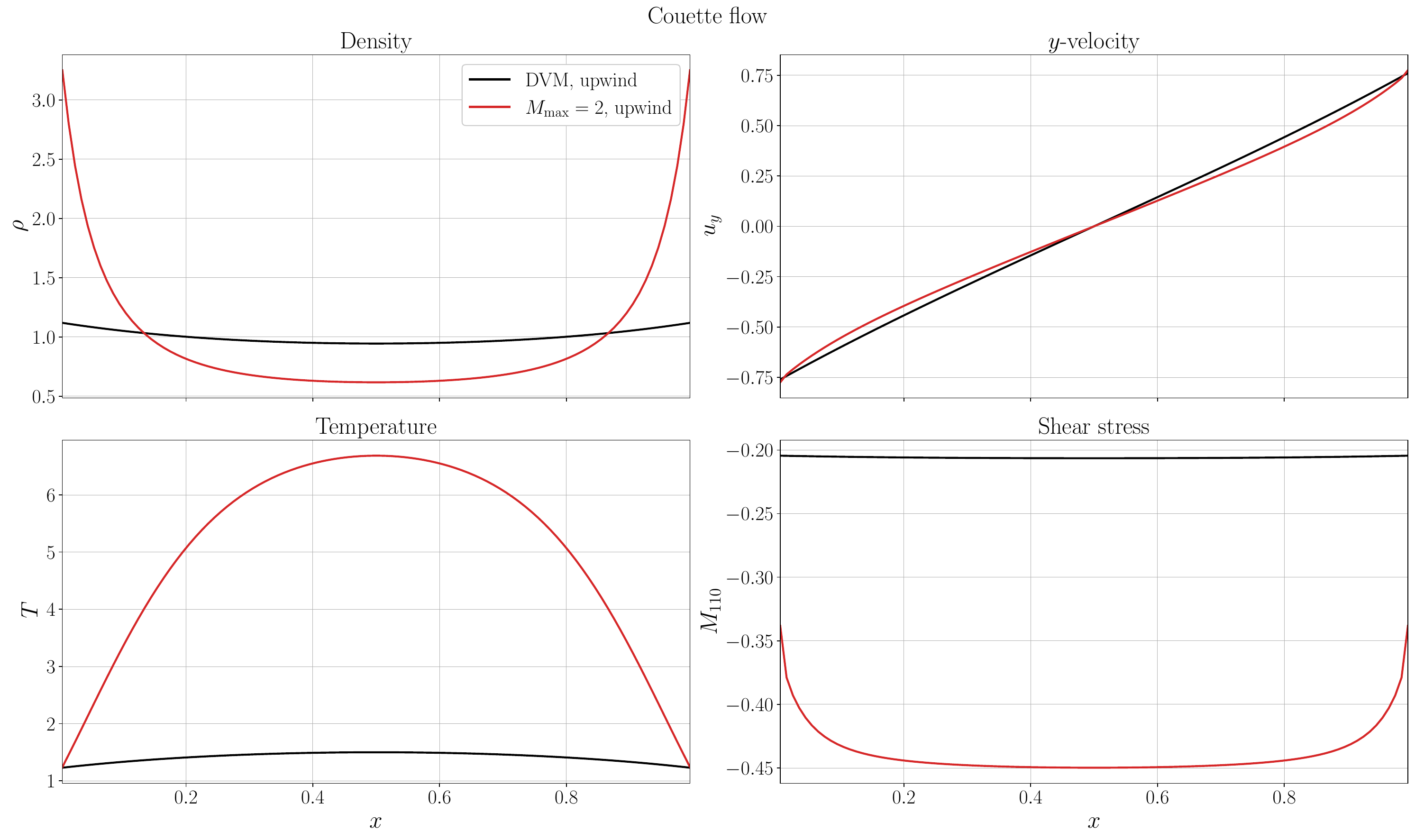}
  \caption{Couette flow profiles for density (upper left), $y$-velocity (upper right), temperature (lower left), and shear stress (lower right) computed with the DVM method and the sparse entropic quadrature method with $M_{\mathrm{max}}=2$ with $\lambda=0$.}\label{fig:couette-upwind-m2}
  \end{figure}

First, we consider the results computed with the 10-moment ($M_{\mathrm{max}}=2$) system. Figure~\ref{fig:couette-upwind-m2} shows the density, $y$-velocity, temperature, and shear stress profiles computed using the upwind scheme. We see that the lack of explicitly modelled heat transfer terms in the system has a strong detrimental effect on the solution quality, and the density, temperature and shear stress profiles exhibit extreme deviations from the reference solution. As such, we exclude the $M_{\mathrm{max}}=2$ case from further consideration.

\begin{figure}[t!]
  \centering
  \includegraphics[width=0.88\textwidth]{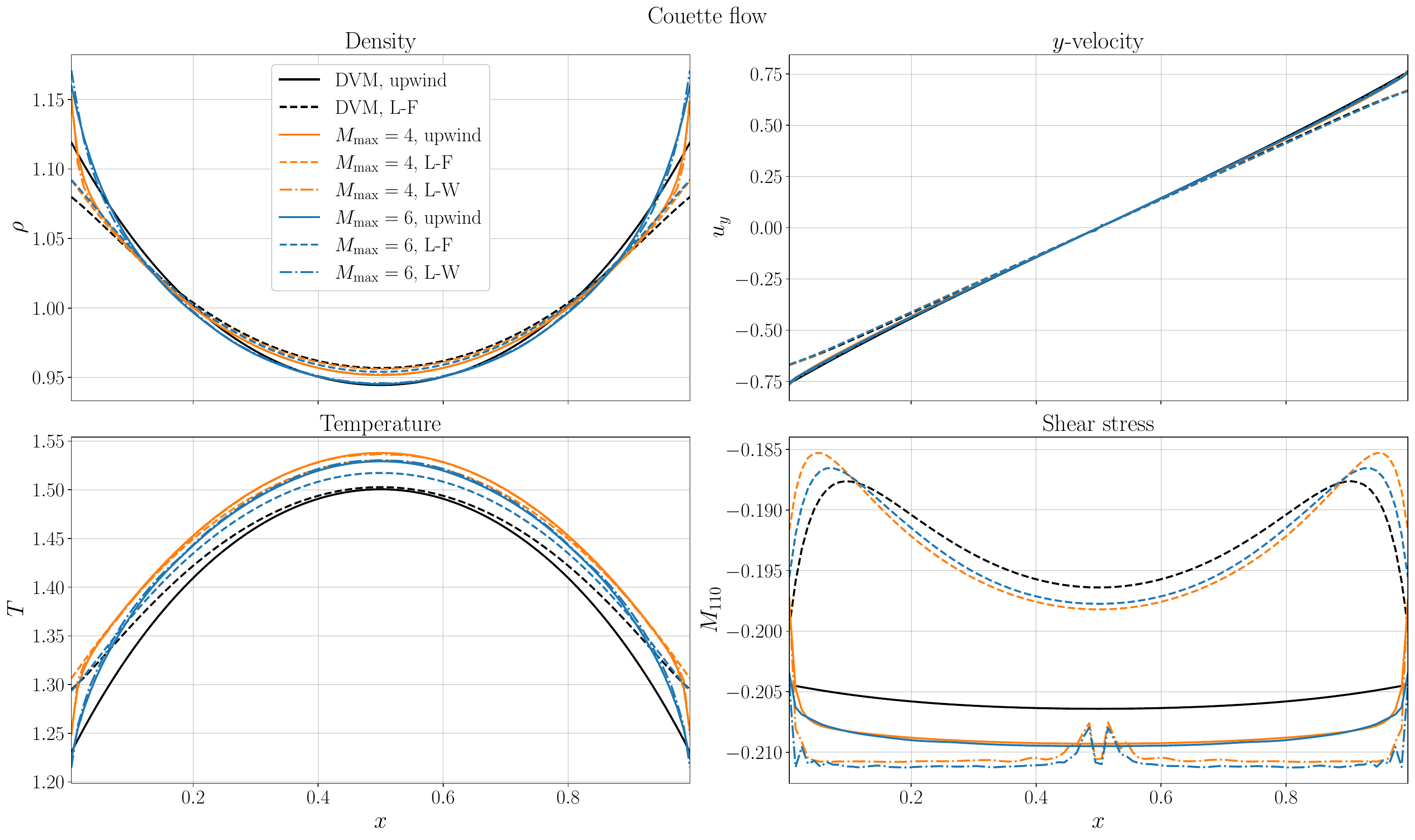}
  \caption{Couette flow profiles for density (upper left), $y$-velocity (upper right), temperature (lower left), and shear stress (lower right) computed with the DVM method and the sparse entropic quadrature method with $M_{\mathrm{max}}=4$ and $M_{\mathrm{max}}=6$ for $\lambda=0$.}\label{fig:couette-dvm-m4}
  \end{figure}

Figure~\ref{fig:couette-dvm-m4} shows the density, $y$-velocity, $T$, and shear stress profiles computed using the discrete velocity method and the sparse entropic quadrature method with $M_{\mathrm{max}}=4$ and $M_{\mathrm{max}}=6$ with no sparsity-promoting regularization, i.e. with $\lambda=0$. The solutions to moment equations were computed using the Lax-Friedrichs method (``L-F''), upwinding (``upwind''), and the Lax-Wendroff method (``L-W'').
We see that the moment method gives results close to those given by the DVM method, with the Lax-Friedrichs method also leading to stronger deviations in the shear stress profile, similar to the behaviour exhibited by the DVM solution. The higher-order Lax-Wendroff solution exhibits slight oscillations in the center of the domain.
This is due to the center of the domain exhibiting a local minimum/maximum (depending on the quantity considered), which is physical; however, the flux limiter used in the Lax-Wendroff scheme reduces the the scheme to a first-order one due to this flow feature, whereas in the neighbouring cells, the scheme is back to being second-order. This abrupt variation in the flux limiter leads to the oscillations seen in the solution, highlighting the need for more robust flux limiting approaches. We also note that both of the lower-diffusion schemes, i.e. upwinding and Lax-Wendroff, exhibit larger temperature/density and shear stress gradients at the boundaries. 
The reason is that the solution at the boundary, computed via the kinetic boundary condition, is discontinuous in velocity space. The moment method is not able to resolve this discontinuity due to the exponential structure of the solution. So reduction of numerical diffusion leads to the issue becoming more prominent, whereas with a highly diffusive scheme (Lax-Friedrichs) it is not observed. DVM, operating node-wise, does not experience the same problem.
Comparing the $M_{\mathrm{max}}=4$ and $M_{\mathrm{max}}=6$ results, we see that increasing the number of conserved moments improves the resolution of the flow in the boundary layer, as the higher-order polynomial in the exponential defining the distribution function representation~\eqref{eq:gibbs-form} is better capable of approximating abrupt jumps in the distribution.



\begin{figure}[t!]
  \centering
  \includegraphics[width=0.88\textwidth]{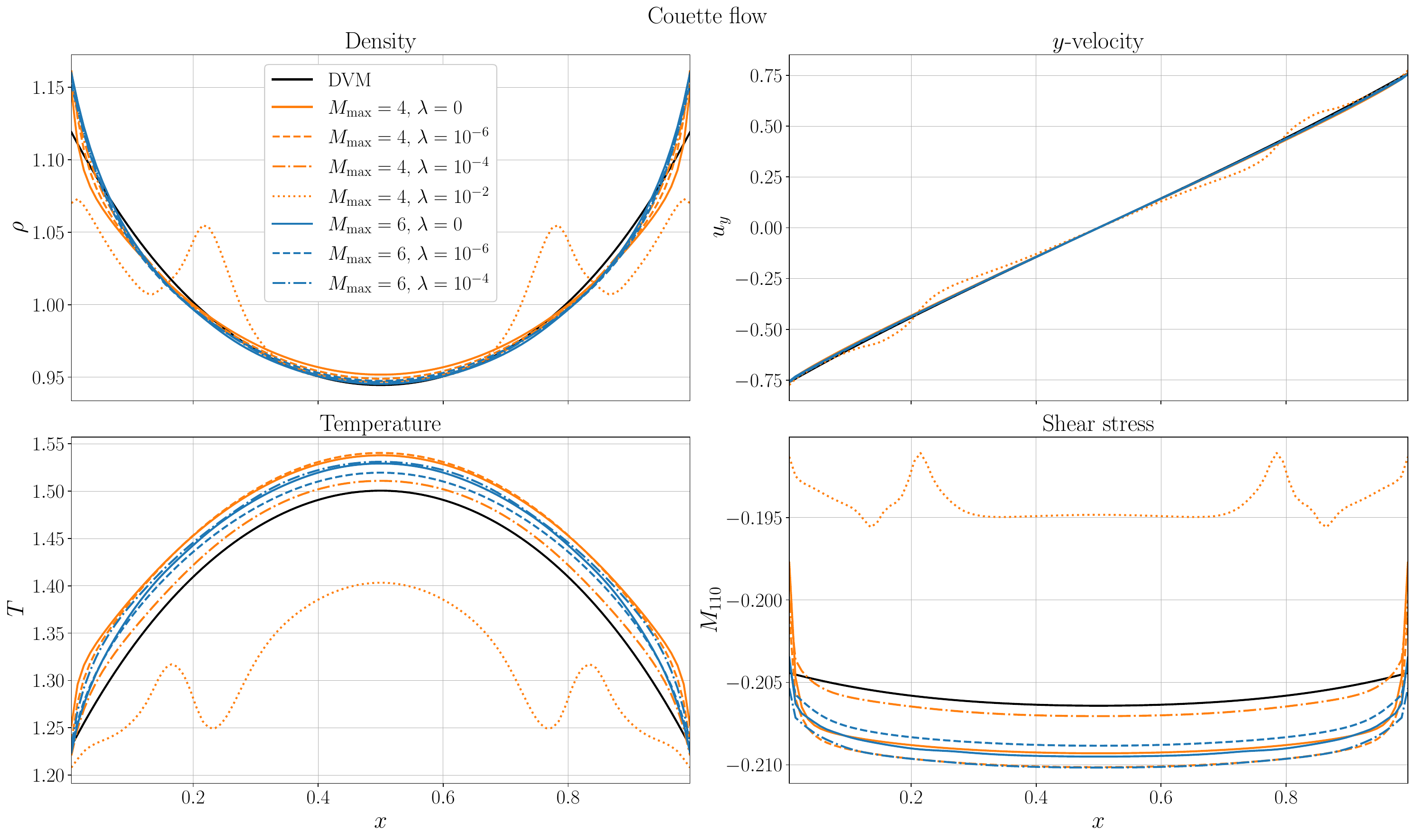}
  \caption{Couette flow profiles for density (upper left), $y$-velocity (upper right), temperature (lower left), and shear stress (lower right) computed with the DVM method and the sparse entropic quadrature method with $M_{\mathrm{max}}=4$ and $M_{\mathrm{max}}=6$ for different values of $\lambda$.}\label{fig:couette-dvm-sparse}
  \end{figure}

  Finally, we consider the impact of the sparsity-promoting regularization on the solution. Figure~\ref{fig:couette-dvm-sparse} shows the flow quantities computed for various values of the regularization parameter $\lambda$. Similar to the Sod shock tube case, for the largest value considered ($\lambda=10^{-2}$), the solution is either significantly degraded ($M_{\mathrm{max}}=4$) or fails to converge ($M_{\mathrm{max}}=6$). However, for smaller values of $\lambda$, the results are very similar to those computed with the DVM method. Analysis of the sparsity in the underlying distribution gives similar values to those seen in the Sod shock tube case: approximately 80\% of the values of the weighting function $g$ are smaller than the chosen threshold value of $10^{-10}$ for the case of $\lambda=10^{-6}$ and approximately 88-90\% of the values of $g$ are small than the threshold value when $\lambda=10^{-4}$.
  Thus, it is possible to achieve a similar level of accuracy with a significant reduction in the number of degrees of freedom in the underlying distribution, thus enabling efficient coupling with DVM solvers which solve the full Boltzmann collision term, the cost of the
  evaluation of which scales directly with the number of non-zero values of the discretized distribution.

\section{Conclusions}\label{sec:conclusions}
We have provided a theoretical analysis of the recently proposed sparse entropic quadrature method for the closure of the kinetic moment equations. We have shown that the method is well-posed and that these properties are retained under a kinetic boundary treatment coupled with appropriate convection and timestepping schemes. We have also shown that the method is able to accurately compute the solution of the kinetic moment equations for simple test cases, and that enforcing even a large degree of sparsity does not have a noteable impact on the accuracy of the solution.
Future work will focus on development of more sophisticated schemes for the treatment of kinetic boundary conditions, more robust higher-order methods for the convective term discretization, and the application of the method to more complex problems.

\backmatter

\bmhead{Acknowledgements}
The authors thank the Deutsche Forschungsgemeinschaft (DFG, German Research Foundation) for financial support through the SFB1481 ``Sparsity and Singular Structures'' (442047500) within the project B04 ``Sparsity Patterns in Kinetic Theory''.

Large Language Models (LLMs), specifically, Claude Fable 5 and Claude Opus 5 were used to assist in the writing of this paper. In the theoretical part, the assistance was restricted to literature search and suggestions on some of the aspects of the theoretical analysis presented in the paper. All of the references and statements have been independently verified by the authors of the paper. Claude Opus 5 was used to develop some parts of the code used to produce the numerical results, the code has been independently tested and verified by the authors of the paper. The authors take full responsibility for the content of the paper.


\begin{thebibliography}{63}
\ifx \bisbn   \undefined \def \bisbn  #1{ISBN #1}\fi
\ifx \binits  \undefined \def \binits#1{#1}\fi
\ifx \bauthor  \undefined \def \bauthor#1{#1}\fi
\ifx \batitle  \undefined \def \batitle#1{#1}\fi
\ifx \bjtitle  \undefined \def \bjtitle#1{#1}\fi
\ifx \bvolume  \undefined \def \bvolume#1{\textbf{#1}}\fi
\ifx \byear  \undefined \def \byear#1{#1}\fi
\ifx \bissue  \undefined \def \bissue#1{#1}\fi
\ifx \bfpage  \undefined \def \bfpage#1{#1}\fi
\ifx \blpage  \undefined \def \blpage #1{#1}\fi
\ifx \burl  \undefined \def \burl#1{\textsf{#1}}\fi
\ifx \doiurl  \undefined \def \doiurl#1{\url{https://doi.org/#1}}\fi
\ifx \betal  \undefined \def \betal{\textit{et al.}}\fi
\ifx \binstitute  \undefined \def \binstitute#1{#1}\fi
\ifx \binstitutionaled  \undefined \def \binstitutionaled#1{#1}\fi
\ifx \bctitle  \undefined \def \bctitle#1{#1}\fi
\ifx \beditor  \undefined \def \beditor#1{#1}\fi
\ifx \bpublisher  \undefined \def \bpublisher#1{#1}\fi
\ifx \bbtitle  \undefined \def \bbtitle#1{#1}\fi
\ifx \bedition  \undefined \def \bedition#1{#1}\fi
\ifx \bseriesno  \undefined \def \bseriesno#1{#1}\fi
\ifx \blocation  \undefined \def \blocation#1{#1}\fi
\ifx \bsertitle  \undefined \def \bsertitle#1{#1}\fi
\ifx \bsnm \undefined \def \bsnm#1{#1}\fi
\ifx \bsuffix \undefined \def \bsuffix#1{#1}\fi
\ifx \bparticle \undefined \def \bparticle#1{#1}\fi
\ifx \barticle \undefined \def \barticle#1{#1}\fi
\bibcommenthead
\ifx \bconfdate \undefined \def \bconfdate #1{#1}\fi
\ifx \botherref \undefined \def \botherref #1{#1}\fi
\ifx \url \undefined \def \url#1{\textsf{#1}}\fi
\ifx \bchapter \undefined \def \bchapter#1{#1}\fi
\ifx \bbook \undefined \def \bbook#1{#1}\fi
\ifx \bcomment \undefined \def \bcomment#1{#1}\fi
\ifx \oauthor \undefined \def \oauthor#1{#1}\fi
\ifx \citeauthoryear \undefined \def \citeauthoryear#1{#1}\fi
\ifx \endbibitem  \undefined \def \endbibitem {}\fi
\ifx \bconflocation  \undefined \def \bconflocation#1{#1}\fi
\ifx \arxivurl  \undefined \def \arxivurl#1{\textsf{#1}}\fi
\csname PreBibitemsHook\endcsname

\bibitem[\protect\citeauthoryear{Cercignani}{1988}]{cercignani1988}
\begin{bbook}
\bauthor{\bsnm{Cercignani}, \binits{C.}}:
\bbtitle{The {B}oltzmann Equation and Its Applications}.
\bpublisher{Springer},
\blocation{New York}
(\byear{1988})
\end{bbook}
\endbibitem

\bibitem[\protect\citeauthoryear{Shen}{2005}]{shen2005rarefied}
\begin{bbook}
\bauthor{\bsnm{Shen}, \binits{C.}}:
\bbtitle{Rarefied Gas Dynamics: Fundamentals, Simulations and Micro Flows}.
\bpublisher{Springer},
\blocation{Berlin, Heidelberg}
(\byear{2005})
\end{bbook}
\endbibitem

\bibitem[\protect\citeauthoryear{Torrilhon and Sarna}{2017}]{torrilhon2017hierarchical}
\begin{barticle}
\bauthor{\bsnm{Torrilhon}, \binits{M.}},
\bauthor{\bsnm{Sarna}, \binits{N.}}:
\batitle{Hierarchical {Boltzmann} simulations and model error estimation}.
\bjtitle{Journal of Computational Physics}
\bvolume{342},
\bfpage{66}--\blpage{84}
(\byear{2017})
\end{barticle}
\endbibitem

\bibitem[\protect\citeauthoryear{Verbiest and Koellermeier}{2026}]{verbiest2026model}
\begin{barticle}
\bauthor{\bsnm{Verbiest}, \binits{R.}},
\bauthor{\bsnm{Koellermeier}, \binits{J.}}:
\batitle{Model-error estimation and model adaptivity for hyperbolic moment equations in one dimension}.
\bjtitle{Microfluidics and Nanofluidics}
\bvolume{30}(\bissue{8}),
\bfpage{62}
(\byear{2026})
\end{barticle}
\endbibitem

\bibitem[\protect\citeauthoryear{Hamburger}{1944}]{hamburger1944hermitian}
\begin{barticle}
\bauthor{\bsnm{Hamburger}, \binits{H.L.}}:
\batitle{Hermitian transformations of deficiency-index (1, 1), {J}acobi matrices and undetermined moment problems}.
\bjtitle{Am. J. Math.}
\bvolume{66}(\bissue{4}),
\bfpage{489}--\blpage{522}
(\byear{1944})
\end{barticle}
\endbibitem

\bibitem[\protect\citeauthoryear{Shohat and Tamarkin}{1945}]{Shohat1945problem}
\begin{bbook}
\bauthor{\bsnm{Shohat}, \binits{J.}},
\bauthor{\bsnm{Tamarkin}, \binits{J.}}:
\bbtitle{The Problem of Moments}.
\bpublisher{AMS, Providence},
\blocation{Providence}
(\byear{1945})
\end{bbook}
\endbibitem

\bibitem[\protect\citeauthoryear{Schm{\"u}dgen}{2017}]{schmuedgen2017moment}
\begin{bbook}
\bauthor{\bsnm{Schm{\"u}dgen}, \binits{K.}}:
\bbtitle{The Moment Problem}.
\bsertitle{Graduate Texts in Mathematics}.
\bpublisher{Springer},
\blocation{Cham}
(\byear{2017})
\end{bbook}
\endbibitem

\bibitem[\protect\citeauthoryear{Struchtrup}{1998}]{struchtrup1998number}
\begin{barticle}
\bauthor{\bsnm{Struchtrup}, \binits{H.}}:
\batitle{On the number of moments in radiative transfer problems}.
\bjtitle{Ann. Phys. (N. Y.)}
\bvolume{266}(\bissue{1}),
\bfpage{1}--\blpage{26}
(\byear{1998})
\end{barticle}
\endbibitem

\bibitem[\protect\citeauthoryear{Modest and Mazumder}{2021}]{modest2021radiative}
\begin{bbook}
\bauthor{\bsnm{Modest}, \binits{M.F.}},
\bauthor{\bsnm{Mazumder}, \binits{S.}}:
\bbtitle{Radiative Heat Transfer}.
\bpublisher{Academic press},
\blocation{Cambridge, MA}
(\byear{2021})
\end{bbook}
\endbibitem

\bibitem[\protect\citeauthoryear{Milbrandt and Yau}{2005}]{milbrandt2005multimoment}
\begin{barticle}
\bauthor{\bsnm{Milbrandt}, \binits{J.}},
\bauthor{\bsnm{Yau}, \binits{M.}}:
\batitle{A multimoment bulk microphysics parameterization. part {II}: A proposed three-moment closure and scheme description}.
\bjtitle{J. Atmos. Sci.}
\bvolume{62}(\bissue{9}),
\bfpage{3065}--\blpage{3081}
(\byear{2005})
\end{barticle}
\endbibitem

\bibitem[\protect\citeauthoryear{Yuan et~al.}{2012}]{yuan2012extended}
\begin{barticle}
\bauthor{\bsnm{Yuan}, \binits{C.}},
\bauthor{\bsnm{Laurent}, \binits{F.}},
\bauthor{\bsnm{Fox}, \binits{R.}}:
\batitle{An extended quadrature method of moments for population balance equations}.
\bjtitle{J. Aerosol Sci.}
\bvolume{51},
\bfpage{1}--\blpage{23}
(\byear{2012})
\end{barticle}
\endbibitem

\bibitem[\protect\citeauthoryear{Koellermeier and Rominger}{2020}]{koellermeier2020analysis}
\begin{barticle}
\bauthor{\bsnm{Koellermeier}, \binits{J.}},
\bauthor{\bsnm{Rominger}, \binits{M.}}:
\batitle{Analysis and numerical simulation of hyperbolic shallow water moment equations}.
\bjtitle{Commun. Comput. Phys.}
\bvolume{28}(\bissue{3}),
\bfpage{1038}--\blpage{1084}
(\byear{2020})
\end{barticle}
\endbibitem

\bibitem[\protect\citeauthoryear{Singh and Hespanha}{2006}]{singh2006moment}
\begin{bchapter}
\bauthor{\bsnm{Singh}, \binits{A.}},
\bauthor{\bsnm{Hespanha}, \binits{J.P.}}:
\bctitle{Moment closure techniques for stochastic models in population biology}.
In: \bbtitle{2006 American Control Conference}
(\byear{2006}).
\bcomment{IEEE}
\end{bchapter}
\endbibitem

\bibitem[\protect\citeauthoryear{Gillespie}{2009}]{gillespie2009moment}
\begin{barticle}
\bauthor{\bsnm{Gillespie}, \binits{C.S.}}:
\batitle{Moment-closure approximations for mass-action models}.
\bjtitle{IET Syst. Biol.}
\bvolume{3}(\bissue{1}),
\bfpage{52}--\blpage{58}
(\byear{2009})
\end{barticle}
\endbibitem

\bibitem[\protect\citeauthoryear{Marques~Jr and M{\'e}ndez}{2013}]{marques2013kinetic}
\begin{barticle}
\bauthor{\bsnm{Marques~Jr}, \binits{W.}},
\bauthor{\bsnm{M{\'e}ndez}, \binits{A.}}:
\batitle{On the kinetic theory of vehicular traffic flow: {C}hapman--{E}nskog expansion versus {G}rad’s moment method}.
\bjtitle{Phys. A: Stat. Mech. Appl.}
\bvolume{392}(\bissue{16}),
\bfpage{3430}--\blpage{3440}
(\byear{2013})
\end{barticle}
\endbibitem

\bibitem[\protect\citeauthoryear{Herty et~al.}{2020}]{herty2020bgk}
\begin{barticle}
\bauthor{\bsnm{Herty}, \binits{M.}},
\bauthor{\bsnm{Puppo}, \binits{G.}},
\bauthor{\bsnm{Roncoroni}, \binits{S.}},
\bauthor{\bsnm{Visconti}, \binits{G.}}:
\batitle{The {BGK} approximation of kinetic models for traffic}.
\bjtitle{Kinet. Relat. Models}
\bvolume{13}(\bissue{2}),
\bfpage{279}--\blpage{307}
(\byear{2020})
\end{barticle}
\endbibitem

\bibitem[\protect\citeauthoryear{Grad}{1949}]{grad2kinetic}
\begin{botherref}
\oauthor{\bsnm{Grad}, \binits{H.}}:
On the kinetic theory of rarefied gases.
Commun. Pure Appl. Math
\textbf{2}(331)
(1949)
\end{botherref}
\endbibitem

\bibitem[\protect\citeauthoryear{Struchtrup and Torrilhon}{2003}]{struchtrup2003regularization}
\begin{barticle}
\bauthor{\bsnm{Struchtrup}, \binits{H.}},
\bauthor{\bsnm{Torrilhon}, \binits{M.}}:
\batitle{Regularization of {G}rad’s 13 moment equations: Derivation and linear analysis}.
\bjtitle{Phys. Fluids}
\bvolume{15}(\bissue{9}),
\bfpage{2668}--\blpage{2680}
(\byear{2003})
\end{barticle}
\endbibitem

\bibitem[\protect\citeauthoryear{McGraw}{1997}]{mcgraw1997description}
\begin{barticle}
\bauthor{\bsnm{McGraw}, \binits{R.}}:
\batitle{Description of aerosol dynamics by the quadrature method of moments}.
\bjtitle{Aerosol Sci. Tech.}
\bvolume{27}(\bissue{2}),
\bfpage{255}--\blpage{265}
(\byear{1997})
\end{barticle}
\endbibitem

\bibitem[\protect\citeauthoryear{Fox}{2008}]{fox2008quadrature}
\begin{barticle}
\bauthor{\bsnm{Fox}, \binits{R.O.}}:
\batitle{A quadrature-based third-order moment method for dilute gas-particle flows}.
\bjtitle{J. Comput. Phys.}
\bvolume{227}(\bissue{12}),
\bfpage{6313}--\blpage{6350}
(\byear{2008})
\end{barticle}
\endbibitem

\bibitem[\protect\citeauthoryear{Desjardins et~al.}{2008}]{desjardins2008quadrature}
\begin{barticle}
\bauthor{\bsnm{Desjardins}, \binits{O.}},
\bauthor{\bsnm{Fox}, \binits{R.O.}},
\bauthor{\bsnm{Villedieu}, \binits{P.}}:
\batitle{A quadrature-based moment method for dilute fluid-particle flows}.
\bjtitle{J. Comput. Phys.}
\bvolume{227}(\bissue{4}),
\bfpage{2514}--\blpage{2539}
(\byear{2008})
\end{barticle}
\endbibitem

\bibitem[\protect\citeauthoryear{Chalons et~al.}{2010}]{chalons2010beyond}
\begin{botherref}
\oauthor{\bsnm{Chalons}, \binits{C.}},
\oauthor{\bsnm{Kah}, \binits{D.}},
\oauthor{\bsnm{Massot}, \binits{M.}}:
Beyond pressureless gas dynamics: quadrature-based velocity moment models.
arXiv preprint arXiv:1011.2974
(2010)
\end{botherref}
\endbibitem

\bibitem[\protect\citeauthoryear{Fox et~al.}{2018}]{fox2018conditional}
\begin{barticle}
\bauthor{\bsnm{Fox}, \binits{R.O.}},
\bauthor{\bsnm{Laurent}, \binits{F.}},
\bauthor{\bsnm{Vi{\'e}}, \binits{A.}}:
\batitle{Conditional hyperbolic quadrature method of moments for kinetic equations}.
\bjtitle{J. Comput. Phys.}
\bvolume{365},
\bfpage{269}--\blpage{293}
(\byear{2018})
\end{barticle}
\endbibitem

\bibitem[\protect\citeauthoryear{Van~Cappellen et~al.}{2021}]{van2021higher}
\begin{barticle}
\bauthor{\bsnm{Van~Cappellen}, \binits{M.}},
\bauthor{\bsnm{Vetrano}, \binits{M.R.}},
\bauthor{\bsnm{Laboureur}, \binits{D.}}:
\batitle{Higher order hyperbolic quadrature method of moments for solving kinetic equations}.
\bjtitle{J. Comput. Phys.}
\bvolume{436},
\bfpage{110280}
(\byear{2021})
\end{barticle}
\endbibitem

\bibitem[\protect\citeauthoryear{Huang et~al.}{2020}]{huang2020stability}
\begin{barticle}
\bauthor{\bsnm{Huang}, \binits{Q.}},
\bauthor{\bsnm{Li}, \binits{S.}},
\bauthor{\bsnm{Yong}, \binits{W.-A.}}:
\batitle{Stability analysis of quadrature-based moment methods for kinetic equations}.
\bjtitle{SIAM J. Appl. Math.}
\bvolume{80}(\bissue{1}),
\bfpage{206}--\blpage{231}
(\byear{2020})
\end{barticle}
\endbibitem

\bibitem[\protect\citeauthoryear{Fox and Laurent}{2022}]{FoxLaurent}
\begin{barticle}
\bauthor{\bsnm{Fox}, \binits{R.O.}},
\bauthor{\bsnm{Laurent}, \binits{F.}}:
\batitle{Hyperbolic quadrature method of moments for the one-dimensional kinetic equation}.
\bjtitle{SIAM J. Appl. Math.}
\bvolume{82}(\bissue{2}),
\bfpage{750}--\blpage{771}
(\byear{2022})
\end{barticle}
\endbibitem

\bibitem[\protect\citeauthoryear{Fox et~al.}{2023}]{fox2023generalized}
\begin{barticle}
\bauthor{\bsnm{Fox}, \binits{R.O.}},
\bauthor{\bsnm{Laurent}, \binits{F.}},
\bauthor{\bsnm{Passalacqua}, \binits{A.}}:
\batitle{The generalized quadrature method of moments}.
\bjtitle{J. Aerosol Sci.}
\bvolume{167},
\bfpage{106096}
(\byear{2023})
\end{barticle}
\endbibitem

\bibitem[\protect\citeauthoryear{Yilmaz et~al.}{2026}]{yilmaz2026nonlinear}
\begin{barticle}
\bauthor{\bsnm{Yilmaz}, \binits{E.}},
\bauthor{\bsnm{Oblapenko}, \binits{G.}},
\bauthor{\bsnm{Torrilhon}, \binits{M.}}:
\batitle{On nonlinear closures for moment equations based on orthogonal polynomials}.
\bjtitle{SIAM Journal on Applied Mathematics}
\bvolume{86}(\bissue{3}),
\bfpage{839}--\blpage{869}
(\byear{2026})
\end{barticle}
\endbibitem

\bibitem[\protect\citeauthoryear{Levermore}{1997}]{levermore1997entropy}
\begin{barticle}
\bauthor{\bsnm{Levermore}, \binits{C.D.}}:
\batitle{Entropy-based moment closures for kinetic equations}.
\bjtitle{Transp. Theor. Stat.}
\bvolume{26}(\bissue{4-5}),
\bfpage{591}--\blpage{606}
(\byear{1997})
\end{barticle}
\endbibitem

\bibitem[\protect\citeauthoryear{McDonald and Torrilhon}{2013}]{mcdonald2013affordable}
\begin{barticle}
\bauthor{\bsnm{McDonald}, \binits{J.}},
\bauthor{\bsnm{Torrilhon}, \binits{M.}}:
\batitle{Affordable robust moment closures for {CFD} based on the maximum-entropy hierarchy}.
\bjtitle{J. Comput. Phys.}
\bvolume{251},
\bfpage{500}--\blpage{523}
(\byear{2013})
\end{barticle}
\endbibitem

\bibitem[\protect\citeauthoryear{Alldredge et~al.}{2019}]{alldredge2019regularized}
\begin{barticle}
\bauthor{\bsnm{Alldredge}, \binits{G.W.}},
\bauthor{\bsnm{Frank}, \binits{M.}},
\bauthor{\bsnm{Hauck}, \binits{C.D.}}:
\batitle{A regularized entropy-based moment method for kinetic equations}.
\bjtitle{SIAM Journal on Applied Mathematics}
\bvolume{79}(\bissue{5}),
\bfpage{1627}--\blpage{1653}
(\byear{2019})
\end{barticle}
\endbibitem

\bibitem[\protect\citeauthoryear{Koellermeier et~al.}{2014}]{koellermeier2014framework}
\begin{botherref}
\oauthor{\bsnm{Koellermeier}, \binits{J.}},
\oauthor{\bsnm{Schaerer}, \binits{R.P.}},
\oauthor{\bsnm{Torrilhon}, \binits{M.}}:
A framework for hyperbolic approximation of kinetic equations using quadrature-based projection methods.
Kinet. Relat. Models
\textbf{7}(3)
(2014)
\end{botherref}
\endbibitem

\bibitem[\protect\citeauthoryear{Abdelmalik and Van~Brummelen}{2016}]{abdelmalik2016moment}
\begin{barticle}
\bauthor{\bsnm{Abdelmalik}, \binits{M.}},
\bauthor{\bsnm{Van~Brummelen}, \binits{E.}}:
\batitle{Moment closure approximations of the {B}oltzmann equation based on $\varphi$-divergences}.
\bjtitle{J. Stat. Phys.}
\bvolume{164}(\bissue{1}),
\bfpage{77}--\blpage{104}
(\byear{2016})
\end{barticle}
\endbibitem

\bibitem[\protect\citeauthoryear{Pichard}{2026}]{pichardconvergence}
\begin{botherref}
\oauthor{\bsnm{Pichard}, \binits{T.}}:
Convergence analysis of moment-based approximations in kinetic theory
(2026)
\end{botherref}
\endbibitem

\bibitem[\protect\citeauthoryear{Torrilhon}{2016}]{torrilhon2016modeling}
\begin{barticle}
\bauthor{\bsnm{Torrilhon}, \binits{M.}}:
\batitle{Modeling nonequilibrium gas flow based on moment equations}.
\bjtitle{Annu. Rev. Fluid Mech.}
\bvolume{48},
\bfpage{429}--\blpage{458}
(\byear{2016})
\end{barticle}
\endbibitem

\bibitem[\protect\citeauthoryear{Pichard}{2023}]{pichard2023some}
\begin{barticle}
\bauthor{\bsnm{Pichard}, \binits{T.}}:
\batitle{Some recent advances on the method of moments in kinetic theory}.
\bjtitle{ESAIM: Proceedings and Surveys}
\bvolume{75},
\bfpage{86}--\blpage{95}
(\byear{2023})
\end{barticle}
\endbibitem

\bibitem[\protect\citeauthoryear{Schaerer et~al.}{2017}]{schaerer2017efficient}
\begin{barticle}
\bauthor{\bsnm{Schaerer}, \binits{R.P.}},
\bauthor{\bsnm{Bansal}, \binits{P.}},
\bauthor{\bsnm{Torrilhon}, \binits{M.}}:
\batitle{Efficient algorithms and implementations of entropy-based moment closures for rarefied gases}.
\bjtitle{Journal of Computational Physics}
\bvolume{340},
\bfpage{138}--\blpage{159}
(\byear{2017})
\end{barticle}
\endbibitem

\bibitem[\protect\citeauthoryear{Schaerer and Torrilhon}{2017}]{schaerer201735}
\begin{barticle}
\bauthor{\bsnm{Schaerer}, \binits{R.P.}},
\bauthor{\bsnm{Torrilhon}, \binits{M.}}:
\batitle{The 35-moment system with the maximum-entropy closure for rarefied gas flows}.
\bjtitle{European Journal of Mechanics-B/Fluids}
\bvolume{64},
\bfpage{30}--\blpage{40}
(\byear{2017})
\end{barticle}
\endbibitem

\bibitem[\protect\citeauthoryear{B{\"o}hmer and Torrilhon}{2020}]{bohmer2020entropic}
\begin{barticle}
\bauthor{\bsnm{B{\"o}hmer}, \binits{N.}},
\bauthor{\bsnm{Torrilhon}, \binits{M.}}:
\batitle{Entropic quadrature for moment approximations of the {Boltzmann-BGK} equation}.
\bjtitle{J. Comput. Phys.}
\bvolume{401},
\bfpage{108992}
(\byear{2020})
\end{barticle}
\endbibitem

\bibitem[\protect\citeauthoryear{Pichard and Laurent}{2025}]{pichard2025entropy}
\begin{botherref}
\oauthor{\bsnm{Pichard}, \binits{T.}},
\oauthor{\bsnm{Laurent}, \binits{F.}}:
On the entropy dissipation of systems of quadratures
(2025)
\end{botherref}
\endbibitem

\bibitem[\protect\citeauthoryear{Oblapenko et~al.}{2026}]{oblapenko2026sparse}
\begin{botherref}
\oauthor{\bsnm{Oblapenko}, \binits{G.}},
\oauthor{\bsnm{Theisen}, \binits{L.}},
\oauthor{\bsnm{Wilhelm}, \binits{R.-P.}},
\oauthor{\bsnm{Herty}, \binits{M.}},
\oauthor{\bsnm{Torrilhon}, \binits{M.}}:
Sparse and low-rank kinetic distribution estimation.
arXiv preprint arXiv:2606.04878
(2026)
\end{botherref}
\endbibitem

\bibitem[\protect\citeauthoryear{Junk}{1998}]{junk1998domain}
\begin{barticle}
\bauthor{\bsnm{Junk}, \binits{M.}}:
\batitle{Domain of definition of {L}evermore's five-moment system}.
\bjtitle{Journal of Statistical Physics}
\bvolume{93}(\bissue{5}),
\bfpage{1143}--\blpage{1167}
(\byear{1998})
\end{barticle}
\endbibitem

\bibitem[\protect\citeauthoryear{Hauck et~al.}{2008}]{hauck2008convex}
\begin{barticle}
\bauthor{\bsnm{Hauck}, \binits{C.D.}},
\bauthor{\bsnm{Levermore}, \binits{C.D.}},
\bauthor{\bsnm{Tits}, \binits{A.L.}}:
\batitle{Convex duality and entropy-based moment closures: Characterizing degenerate densities}.
\bjtitle{SIAM Journal on Control and Optimization}
\bvolume{47}(\bissue{4}),
\bfpage{1977}--\blpage{2015}
(\byear{2008})
\end{barticle}
\endbibitem

\bibitem[\protect\citeauthoryear{Yuan and Fox}{2011}]{yuan2011conditional}
\begin{barticle}
\bauthor{\bsnm{Yuan}, \binits{C.}},
\bauthor{\bsnm{Fox}, \binits{R.O.}}:
\batitle{Conditional quadrature method of moments for kinetic equations}.
\bjtitle{Journal of Computational Physics}
\bvolume{230}(\bissue{22}),
\bfpage{8216}--\blpage{8246}
(\byear{2011})
\end{barticle}
\endbibitem

\bibitem[\protect\citeauthoryear{Rice et~al.}{2026}]{rice2026robustly}
\begin{botherref}
\oauthor{\bsnm{Rice}, \binits{E.}},
\oauthor{\bsnm{Plante-Sabourin}, \binits{{\'E}.}},
\oauthor{\bsnm{McDonald}, \binits{J.G.}}:
Robustly hyperbolic high-order moment-closures for multidimensional gases.
Journal of Computational Physics,
115026
(2026)
\end{botherref}
\endbibitem

\bibitem[\protect\citeauthoryear{Oblapenko et~al.}{2026}]{oblapenko2026sparsekrm}
\begin{barticle}
\bauthor{\bsnm{Oblapenko}, \binits{G.}},
\bauthor{\bsnm{Torrilhon}, \binits{M.}},
\bauthor{\bsnm{Herty}, \binits{M.}}:
\batitle{Sparse reconstruction of multi-dimensional kinetic distributions}.
\bjtitle{Kinetic and Related Models}
\bvolume{20},
\bfpage{80}--\blpage{104}
(\byear{2026})
\end{barticle}
\endbibitem

\bibitem[\protect\citeauthoryear{Dimarco and Pareschi}{2014}]{dimarco2014numerical}
\begin{barticle}
\bauthor{\bsnm{Dimarco}, \binits{G.}},
\bauthor{\bsnm{Pareschi}, \binits{L.}}:
\batitle{Numerical methods for kinetic equations}.
\bjtitle{Acta Numerica}
\bvolume{23},
\bfpage{369}--\blpage{520}
(\byear{2014})
\end{barticle}
\endbibitem

\bibitem[\protect\citeauthoryear{Levermore}{1996}]{levermore1996moment}
\begin{barticle}
\bauthor{\bsnm{Levermore}, \binits{C.D.}}:
\batitle{Moment closure hierarchies for kinetic theories}.
\bjtitle{Journal of statistical Physics}
\bvolume{83}(\bissue{5}),
\bfpage{1021}--\blpage{1065}
(\byear{1996})
\end{barticle}
\endbibitem

\bibitem[\protect\citeauthoryear{Rockafellar}{1970}]{rockafellar}
\begin{bbook}
\bauthor{\bsnm{Rockafellar}, \binits{R.T.}}:
\bbtitle{Convex Analysis}.
\bpublisher{Princeton University Press},
\blocation{Princeton}
(\byear{1970})
\end{bbook}
\endbibitem

\bibitem[\protect\citeauthoryear{Mieussens}{2000}]{mieussens2000discrete}
\begin{barticle}
\bauthor{\bsnm{Mieussens}, \binits{L.}}:
\batitle{Discrete velocity model and implicit scheme for the {BGK} equation of rarefied gas dynamics}.
\bjtitle{Mathematical Models and Methods in Applied Sciences}
\bvolume{10}(\bissue{08}),
\bfpage{1121}--\blpage{1149}
(\byear{2000})
\end{barticle}
\endbibitem

\bibitem[\protect\citeauthoryear{Godlewski and Raviart}{2021}]{godlewski2013numerical}
\begin{bbook}
\bauthor{\bsnm{Godlewski}, \binits{E.}},
\bauthor{\bsnm{Raviart}, \binits{P.-A.}}:
\bbtitle{Numerical Approximation of Hyperbolic Systems of Conservation Laws},
p. \bfpage{840}.
\bpublisher{Springer},
\blocation{New York}
(\byear{2021})
\end{bbook}
\endbibitem

\bibitem[\protect\citeauthoryear{Golub and Welsch}{1969}]{golub1969calculation}
\begin{barticle}
\bauthor{\bsnm{Golub}, \binits{G.H.}},
\bauthor{\bsnm{Welsch}, \binits{J.H.}}:
\batitle{Calculation of {G}auss quadrature rules}.
\bjtitle{Mathematics of Computation}
\bvolume{23}(\bissue{106}),
\bfpage{221}--\blpage{230}
(\byear{1969})
\end{barticle}
\endbibitem

\bibitem[\protect\citeauthoryear{Gautschi}{2004}]{gautschi2004orthogonal}
\begin{bbook}
\bauthor{\bsnm{Gautschi}, \binits{W.}}:
\bbtitle{Orthogonal Polynomials: Computation and Approximation}.
\bpublisher{Oxford University Press},
\blocation{Oxford}
(\byear{2004})
\end{bbook}
\endbibitem

\bibitem[\protect\citeauthoryear{Bauer and Fike}{1960}]{bauer1960norms}
\begin{barticle}
\bauthor{\bsnm{Bauer}, \binits{F.L.}},
\bauthor{\bsnm{Fike}, \binits{C.T.}}:
\batitle{Norms and exclusion theorems}.
\bjtitle{Numerische mathematik}
\bvolume{2}(\bissue{1}),
\bfpage{137}--\blpage{141}
(\byear{1960})
\end{barticle}
\endbibitem

\bibitem[\protect\citeauthoryear{Baranger et~al.}{2019}]{baranger2019numerical}
\begin{barticle}
\bauthor{\bsnm{Baranger}, \binits{C.}},
\bauthor{\bsnm{H{\'e}rouard}, \binits{N.}},
\bauthor{\bsnm{Mathiaud}, \binits{J.}},
\bauthor{\bsnm{Mieussens}, \binits{L.}}:
\batitle{Numerical boundary conditions in finite volume and discontinuous galerkin schemes for the simulation of rarefied flows along solid boundaries}.
\bjtitle{Mathematics and Computers in Simulation}
\bvolume{159},
\bfpage{136}--\blpage{153}
(\byear{2019})
\end{barticle}
\endbibitem

\bibitem[\protect\citeauthoryear{Alldredge et~al.}{2012}]{alldredge2012high}
\begin{barticle}
\bauthor{\bsnm{Alldredge}, \binits{G.W.}},
\bauthor{\bsnm{Hauck}, \binits{C.D.}},
\bauthor{\bsnm{Tits}, \binits{A.L.}}:
\batitle{High-order entropy-based closures for linear transport in slab geometry {II}: A computational study of the optimization problem}.
\bjtitle{SIAM Journal on Scientific Computing}
\bvolume{34}(\bissue{4}),
\bfpage{361}--\blpage{391}
(\byear{2012})
\end{barticle}
\endbibitem

\bibitem[\protect\citeauthoryear{Johnson et~al.}{2023}]{johnson2023positivity}
\begin{barticle}
\bauthor{\bsnm{Johnson}, \binits{E.R.}},
\bauthor{\bsnm{Rossmanith}, \binits{J.A.}},
\bauthor{\bsnm{Vaughan}, \binits{C.}}:
\batitle{Positivity-preserving {Lax}--{Wendroff} {Discontinuous} {Galerkin} schemes for quadrature-based moment-closure approximations of kinetic models}.
\bjtitle{Journal of Scientific Computing}
\bvolume{95}(\bissue{1}),
\bfpage{19}
(\byear{2023})
\end{barticle}
\endbibitem

\bibitem[\protect\citeauthoryear{Bhatnagar et~al.}{1954}]{bhatnagar1954model}
\begin{barticle}
\bauthor{\bsnm{Bhatnagar}, \binits{P.L.}},
\bauthor{\bsnm{Gross}, \binits{E.P.}},
\bauthor{\bsnm{Krook}, \binits{M.}}:
\batitle{A model for collision processes in gases. {I}. {Small} amplitude processes in charged and neutral one-component systems}.
\bjtitle{Physical review}
\bvolume{94}(\bissue{3}),
\bfpage{511}
(\byear{1954})
\end{barticle}
\endbibitem

\bibitem[\protect\citeauthoryear{Bird}{1998}]{DSMC_Bird}
\begin{bbook}
\bauthor{\bsnm{Bird}, \binits{G.A.}}:
\bbtitle{Molecular Gas Dynamics and the Direct Simulation of Gas Flows},
\bedition{2nd} edn.
\bpublisher{Oxford University Press},
\blocation{Oxford}
(\byear{1998})
\end{bbook}
\endbibitem

\bibitem[\protect\citeauthoryear{Oblapenko}{2026a}]{oblapenko2026specqkrepro}
\begin{botherref}
\oauthor{\bsnm{Oblapenko}, \binits{G.}}:
Reproducibility repository for "Sparse entropic quadrature for moment equations".
\url{https://github.com/knstmrd/reproducibility-2026-sparse-entropic-for-moment}
(2026).
\doiurl{10.5281/zenodo.22768875}
\end{botherref}
\endbibitem

\bibitem[\protect\citeauthoryear{Oblapenko}{2026b}]{oblapenko2026specqk}
\begin{botherref}
\oauthor{\bsnm{Oblapenko}, \binits{G.}}:
SPEcQK.jl.
\url{https://github.com/ACoM-RWTH/specqk}
(2026).
\doiurl{10.5281/zenodo.20026361}
\end{botherref}
\endbibitem

\bibitem[\protect\citeauthoryear{Oblapenko}{2026c}]{oblapenko2026specqkdata}
\begin{botherref}
\oauthor{\bsnm{Oblapenko}, \binits{G.}}:
Numerical simulation data for "Sparse entropic quadrature for moment equations".
\url{https://zenodo.org/records/22768394}
(2026).
\doiurl{10.5281/zenodo.22768394}
\end{botherref}
\endbibitem

\bibitem[\protect\citeauthoryear{Boccelli et~al.}{2024}]{boccelli2024gallery}
\begin{barticle}
\bauthor{\bsnm{Boccelli}, \binits{S.}},
\bauthor{\bsnm{Giroux}, \binits{F.}},
\bauthor{\bsnm{McDonald}, \binits{J.G.}}:
\batitle{A gallery of maximum-entropy distributions: 14 and 21 moments}.
\bjtitle{Journal of Statistical Physics}
\bvolume{191}(\bissue{3}),
\bfpage{39}
(\byear{2024})
\end{barticle}
\endbibitem

\end{thebibliography}

\end{document}